\documentclass[11pt,letterpaper]{amsart}
\usepackage{amssymb}
\usepackage{amsfonts}
\usepackage{mathrsfs}
\usepackage{amsmath}
\usepackage{graphicx}
\usepackage{hyperref}
\usepackage{float}
\usepackage{epstopdf}
\usepackage{color}
\usepackage{bm}
\usepackage{comment}
\usepackage{soul}

\setcounter{MaxMatrixCols}{10}

\allowdisplaybreaks
\newtheorem{theorem}{Theorem}[section]
\newtheorem{proposition}[theorem]{Proposition}
\newtheorem{lemma}[theorem]{Lemma}
\newtheorem{corollary}[theorem]{Corollary}

\newtheorem{example}[theorem]{Example}
\newtheorem{definition}[theorem]{Definition}

\def\mcD{\mathcal{D}}
\def\mcE{\mathcal{E}}
\def\mcF{\mathcal{F}}
\def\mfL{\mathfrak{L}}
\def\mfI{\mathfrak{I}}
\def\mfR{\mathfrak{R}}
\def\bw{\bm{w}}
\def\mbP{\mathbb{P}}

\numberwithin{equation}{section}

\begin{document}
\title[loop-erased random paths on resistance spaces]{Scaling limits of loop-erased Markov chains on resistance spaces via a partial loop-erasing procedure}

\author{Shiping Cao}
\address{Department of Mathematics, University of Washington, Seattle 98105, USA}
\email{spcao@uw.edu}
\thanks{}

\subjclass[2010]{Primary 60J10, 82B41, 31E05; Secondary 60B10, 28A80}

\date{}

\keywords{loop erasure, Markov chain, resistance space, scaling limit, Sierpi\'nski carpet}

\begin{abstract}
We introduce partial loop-erasing operators. We show that by applying a refinement sequence of partial loop-erasing operators to a finite Markov chain, we get a process equivalent to the chronological loop-erased Markov chain. As an application, we construct loop-erased random paths on bounded domains of resistance spaces as the weak limit of the loop erasure of the Markov chains on a sequence of finite sets approximating the space, and the limit is independent of the approximating sequences. The random paths we constructed are simple paths almost surely, and they can be viewed as the loop-erasure of the paths of the diffusion process. Finally, we show that the scaling limit of the loop-erased random walks on the Sierpi\'nski carpet graphs exists, and is equivalent to the loop-erased random paths on the Sierpi\'nksi carpet. 
\end{abstract}
\maketitle

\section{introduction}
In the 1980's, Lawler introduced loop-erased random walks (LERW) in \cite{L1}, and the model later gains much interest due to its connection with uniform spanning trees \cite{MDM,P,W}.

One of the natural questions about LERWs is the existence of their scaling limits. On Euclidean spaces, significant progress has been made over the past few decades. First, as a main result of \cite{L1}, Lawler showed that, in the high dimensional cases ($d\geq 5$), the scaling limit of LERW on $\mathbb{Z}^d$ exists and is the same as the Brownian motion on $\mathbb{R}^d$. A similar result holds in the $d=4$ case \cite{L2,L3,L4} with an additional logarithmic correction in time. The question becomes more delicate for $d=2,3$ cases. On $\mathbb{R}^2$, Lawler, Schramm and Werner \cite{LSW} proved that the scaling limit exists, which is characterized by SLE$_2$ \cite{S}. On $\mathbb{R}^3$, Kozma proved the existence of the scaling limit \cite{Ko}. It was shown later in \cite{SS} by Sapozhnikov and Shiraishi that the limit is almost surely a simple path. Moreover, a time parameterized scaling limit is obtained recently in \cite{LV} on $\mathbb{R}^2$, and in \cite{LS} on $\mathbb{R}^3$. The reader is referred to \cite{Kenyon,M,D.S} for the study of the growth rate of LERW.

In this paper, we focus on establishing the existence of the scaling limit of LERW on some fractal spaces. In fact, in earlier works \cite{H,HM}, K. Hattori and M. Mizuno have studied LERW on Sierpi\'nski gasket graphs, using an `erasing-larger-loops-first' algorithm. Inspired by their idea, in this paper, we will develop a much simpler and more general algorithm. Using the new algorithm, we can naturally define loop-erased random paths (LERP) on a large class of spaces, including many fractals. More precisely, in a proper local resistance space \cite{ki3,ki4,ki5}, we will define LERP as the weak limit in Hausdorff metric of the loop erasure of Markov chains on finite sets approximating the domain. Typical examples of resistance spaces are strongly local regular Dirichlet spaces that have sub-Gaussian heat kernel estimates with the walk dimension \cite{B,BCK,GHL} greater than the Hausdorff dimension (as a consequence of the Sobolev embedding theorem, see Theorem 4.11 of \cite{GHL} for example). The author hopes that these results can be extended to more general settings in the future, in particular, dropping the condition about the pointwise recurrence: every point has positive capacity in a resistance space.


 \vspace{0.15cm}

The main tool of this paper, as pointed out in the title, is called partial loop erasure (PLE), which is a generalization of loop erasure (LE). When we apply the LE operator to a finite path on a finite set $V$, we erase all the loops in chronological order. On the other hand, the PLE operator associated with $\widetilde{V}\subset V$ is that we only erase loops about points in $\widetilde{V}$. \vspace{0.15cm}

\noindent\textbf{\emph{Loop erasure (LE).}} Let $\bw=(w_0,w_1,\cdots,w_\eta)$ be a finite path on a finite set $V$. First, let $n_0=0$. Then iteratively, for $i\geq 1$, we define
\[n_i=\max\{n_{i-1}\leq n\leq\eta:w_n=w_{n_{i-1}}\}+1,\quad\text{ if }w_{n_{i-1}}\neq w_\eta.\]
We let $n_\iota$ be the last index found by the algorithm (so $w_{n_\iota}=w_\eta$). Define 
\[\mfI(\bw)=(n_0,n_1,\cdots,n_\iota),\quad \mfL(\bw)=(w_{n_0},w_{n_1},\cdots,w_{n_\iota}). \] 
We call $\mfL(\bw)$ the loop erasure (LE) of $\bw$, and call $\mfI(\bw)$ the index set of the LE of $\bw$.

\noindent\textbf{\emph{Partial loop erasure (PLE).}} Let $\widetilde{V}\subset V$ be finite sets. Let $\bw=(w_0,w_1,\cdots,w_\eta)$ be a finite path on $V$. First, let $n_0=0$. Then iteratively, for $i\geq 1$, we define
\[
n_i=\begin{cases}
	\max\{n_{i-1}\leq n\leq\eta:w_n=w_{n_{i-1}}\}+1,&\text{if }w_{n_{i-1}}\in \widetilde{V}\text{ and }w_{n_{i-1}}\neq w_\eta,\\
	n_{i-1}+1,&\text{if }w_{n_{i-1}}\notin \widetilde{V}\text{ and }n_{i-1}\neq \eta.
\end{cases}
\]
We let $n_\iota$ be the last index found by the algorithm (so $w_{n_\iota}=w_\eta$). Define 
\[\mfI_{\widetilde{V}}(\bw)=(n_0,n_1,\cdots,n_\iota),\quad \mfL_{\widetilde{V}}(\bw)=(w_{n_0},w_{n_1},\cdots,w_{n_\iota}). \] 
We call $\mfL_{\widetilde{V}}(\bw)$ the partial loop erasure (PLE) asscoiated with $\widetilde{V}$ of $\bw$, and call $\mfI_{\widetilde{V}}(\bw)$ the index set of the PLE associated with $\widetilde{V}$ of $\bw$.  

In particular, $\mfL_V=\mfL$, and $\mfL_{\emptyset}$ is the identity operator. \vspace{0.15cm}

A refinement sequence of PLE is that we apply PLE a few times, where each time we expand the range $\widetilde{V}$. Surprisingly, although a refinement sequence of PLE can lead to a very different path from the standard LE (see Example \ref{example22}), when we talk about the Markov chain, we will prove that they lead to equivalent processes. More precisely, we consider the random finite path $X|_{[0,\tau_A]}=(X_0,X_1,\cdots,X_{\tau_A})$, where $X_n,n\geq 0$ is a Markov chain on $V$ and $\tau_A=\min\{n\geq 0:X_n\in A\}$ is the entry time. We will prove the following theorem soon in Section 2.

\begin{theorem}\label{th1}
	Let $\big(\Omega,X_n,\mbP_x\big)$ be a discrete time Markov chain (without killing) on a finite set $V$. Let $V_1,V_2,\cdots,V_m$ be a sequence of subsets of $V$ such that
	\[\emptyset\neq V_1\subset V_2\subset \cdots\subset V_m=V.\]
	Then for any $A\subset V$ and $x\in V$ such that $\mbP_x(\tau_A<\infty)=1$, we have 
	\[\mbP_x\big(\mfL(X|_{[0,\tau_A]})\in \bullet\big)=\mbP_x\big(\mfL_{V_m}\circ\cdots\circ\mfL_{V_2}\circ \mfL_{V_1}(X|_{[0,\tau_A]})\in \bullet\big).\]
\end{theorem}

As mentioned earlier, on the Sierpi\'nski gasket graphs a slightly different idea, named `erasing-larger-loops-first', was introduced by K. Hattori and M. Mizuno in \cite{H,HM}, where the equivalence of the algorithm and LE was tested with the aid of a related work \cite{STW} counting spanning trees on the Sierpi\'nski gasket graphs. There hasn't been a satisfying explanation of the phenomenon over the past few years. Our new algorithm doesn't depend on the structure of the underlying graph, and we have a fundamental proof without massive computation.\vspace{0.15cm}

Next, using the tool PLE, we study LERP on resistance spaces (Section 3 and 4) and the scaling limit of LERW on Sierpi\'nski carpets (Section 5). 

We will review important concepts about resistance spaces in Section 3, where readers can find formal explanations about the concepts. For a short explanation, a resistance space (\cite{ki3,ki4,ki5}) is a pair $(K,R)$, where $K$ is a set and $R$ is a metric on $K$, such that on any finite subset $V$ of $K$, $R$ can be viewed as the effective resistance induced by some electrical network on $V$ (see \cite{LP}). $R$ is called the resistance metric for this reason. In this paper, we consider proper local regular resistance spaces, where `proper' means any bounded closed subset is compact so the space is locally compact. Typical examples of such resistance spaces are post critically finite (p.c.f.) self-similar sets \cite{ki1,ki2,ki3} and many Sierpsi\'nski carpet type fractals \cite{BB1,BB4,CQ,KZ}. Also see \cite{C1,CHK} for some other examples.  It is well known that given a Radon measure $\mu$ with full support on a local, regular, locally compact resistance space $(K,R)$, there is an associated diffusion process $(\Omega,X_t,\mathbb{P}_x)$ (\cite{ki3,FOT}).

For each finite $V\subset K$, we let $X^{[V,A]}|_{[0,\xi]}$ be the random finite paths that consists of the points that $X$ hits in $V$ before being killed at $A$ in chronological order. It can also be viewed as the random walk (Markov chain) induced by some electrical network (the trace of the resistance form on $V\cup \{A\}$, where we view $A$ as a single point \cite{ki5}).  $X^{[V,A]}|_{[0,\xi]}$ will approximate the sample paths of the diffusion process as $V$ becomes denser, and we will show that the LE of  $X^{[V,A]}|_{[0,\xi]}$ will converge in distribution with respect to the Hausdorff metric to some random simple (continuous) paths on $(K,R)$. 

More precisely, we will prove the following theorem in Section 4. 

\begin{theorem}[LERP on resistance spaces]\label{th2}
	Let $(K,R)$ be a proper local regular resistance space, let $\mu$ be a Radon measure on $K$ with full support, let $(\Omega,X_t,\mathbb{P}_x)$ be the associated diffusion process, and let $A=K\setminus U$ for some bounded open $U\subsetneq K$. Then, for each $x\in U$, there exists a probability measure $\nu_{x,A}$ on $(\mathcal{K},d_H)$ supported on simple paths connecting $x$ and $A$ such that
	\[\mathbb{P}_x\big(\Im\mfL(X^{[V_m,A]}|_{[0,\xi]})\in \bullet\big)\Rightarrow \nu_{x,A},\text{ as }m\to\infty\]
	on $(\mathcal{K},d_H)$ for any expanding sequence $V_1\subsetneq V_2\subsetneq V_3\subsetneq \cdots$ of finite subsets of $U$ such that $V_*=\bigcup_{m=0}^\infty V_m$ is dense in $U$. 
\end{theorem}

In the statement of the theorem, $\Im$ denotes the image of a map: in other words, we view a path as a compact subset of $K$. $(\mathcal{K},d_H)$ is the space of compact subsets of $(K,R)$ endowed with the Hausdorff metric. See Section 4 for formal definitions.

\begin{figure}[htp]
	\includegraphics[width=4cm]{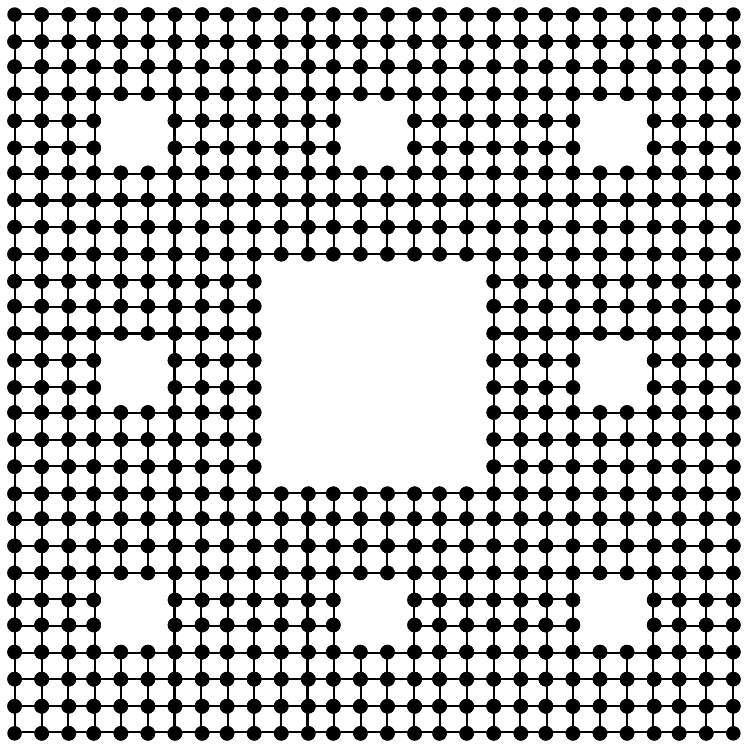}
	\caption{A Sierpi\'nski carpet graph.}\label{fig1}
\end{figure}

Finally, we end the paper with a study of the Sierpi\'nski carpets ($\mathcal{SC}$), which have been a central model in the area of analysis of fractals. Compared with p.c.f. fractals, the ways to construct diffusions on $\mathcal{SC}$ are more complicated \cite{BB1,BB4,CQ,KZ}. Fortunately, in \cite{BBKT}, Barlow, Bass, Kumagai and Teplyaev proved the uniqueness of locally symmetric Dirichlet forms on $\mathcal{SC}$, and based on their result, we can show the existence of the scaling limit of LERW on $\mathcal{SC}$ graphs. This question is also the initial motivation of the paper. See Figure \ref{fig1} for a typical $\mathcal{SC}$ graph.

For simplicity, we focus on a simple case where the paths have starting point $x$ and endpoint $y$, for some $x,y\in V_*$. Here $V_*$ is the union of vertice sets of $\mathcal{SC}$ graphs. The method can be generalized to other cases, for example that the path ends at the boundary of the square (or some sub-intervals of the boundary).  

\begin{theorem}\label{th3}
	Let $K$ be a $\mathcal{SC}$, let $x,y\in V_*$, and let $\nu_{x,y}$ be the distribution of loop-erased random paths defined in Theorem \ref{th2}, which start at $x$ and end at $y$. Then 
	\[
	\mathbb{P}_x^{(m')}\big(\Im\mfL(X^{(m')}|_{[0,\tau_y]})\in \bullet\big)\Rightarrow \nu_{x,y},\quad \text{ as }m'\to\infty,
	\]  
	where the weak convergence is on $(\mathcal{K},d_H)$.
\end{theorem}

We also need to remark here that in this paper we do not consider the time, or the growth exponent of the LERW or LERP. The growth exponent is also of great interest, and worth independent study, on $\mathcal{SC}$ graphs.\vspace{0.15cm}

At the end of the introduction, we have a remark about terminologies. In this paper, a path can be a continuous path or a discrete path. To distinguish between them, we always use the notation $\bw=(w_1,w_2,\cdots)$ for a discrete path, and use $\gamma:[0,s]\to K$ to denote a continuous path. In Section 2, we only care about Markov chains and discrete paths. In Section 3,4,5, $X$ will refer to the diffusion process, which can be viewed as random continuous paths, and we will add some superscript like $ ^{[V,A]}$, $ ^{(m)}$ when we talk about a Markov chain/random walk. 




\section{A refinement sequence of partial loop erasure}
We focus on the proof of Theorem \ref{th1}, and we will provide all the necessary definitions and proofs. Readers can also read the book \cite{L3} and the surveys \cite{L5,L6} for more background knowledge.

The structure of this section is as follows. In the first part, we briefly introduce notations that we haven't explained in the introduction. In the second subsection, the major part of the section, we prove Theorem \ref{th1}. Finally, at the end of this section, as an easy example, we will briefly discuss how Theorem \ref{th1} is applied to Sierpi\'nski gasket graphs.  
 
\subsection{A review of basic concepts}\label{sec2.1} We explain the concepts and notations that appear in the statement of Theorem \ref{th1}. Throughout this paper, we say a finite path $\bm{w}=(w_1,w_2,\cdots, w_\eta)$ is on $V$ if $w_n\in V,\forall 0\leq n\leq \eta$. Also, with a slight abuse of notation, formulas like $x_0,x_1,\cdots,x_j$ simply means the empty sequence when $j=-1$. \vspace{0.15cm}

First, we briefly review the definition of Markov chains. In this paper, `Markov chain' always means a Markov chain without killing. One can also easily apply the theorem to a \emph{Markov chain with killing} (sub-Markov chain), by viewing the killing time as the entry time $\tau_\triangle$ of some absorbing state $\triangle$.\vspace{0.15cm}

\noindent\textbf{\emph{Markov chain on a finite set.}} Let $V$ be a finite set, and let $P$ be a Markov kernel on $V$, i.e. $P:V\times V\to [0,1]$ and $\sum_{y\in V}P(x,y)=1,\forall x\in V$. Then there is a Markov chain $(\Omega,X_n,\mbP_x)$, where $\Omega$ is the sample space, $X_n:\Omega\to V$ is a random variable for each $n\geq 0$, and $\mathbb{P}_x$ is a probability measure on $\Omega$ for each $x\in V$, such that for any $x,y\in V$,
\[
\mbP_x(X_n=y)=P_n(x,y):=\sum_{y_1,y_2,\cdots,y_{n-1}\in V}P(x,y_1)P(y_1,y_2)\cdots P(y_{n-1},y)
\]
for any $n\geq 1$, and $\mathbb{P}_x(X_0=y)=P_0(x,y):=\delta_{x,y}$. Here $\delta_{x,y}$ is the kronecker delta, i.e. $\delta_{x,y}=0$ if $x\neq y$; $\delta_{x,y}=1$ if $x=y$. We also let $\theta_n:\Omega\to\Omega,n\geq 0$ be the shift mappings, i.e. 
\[
X_n\circ \theta_{n'}=X_{n+n'},\qquad \forall n,n'\geq 0.
\]
For each $A\subset X$, we define the \emph{hitting time} $\mathring{\tau}_A$ and the \emph{entry time} $\tau_A$ as
\[\mathring{\tau}_A:=\min\{n\geq 1:X_n\in A\},\quad \tau_A:=\min\{n\geq 0:X_n\in A\},\]
where we set $\min\emptyset=\infty$. In addition, we let \[\mathring{\tau}^{(1)}_A:=\mathring{\tau}_A\text{ and }\mathring{\tau}_A^{(i)}:=\mathring{\tau}_A\circ \theta_{\mathring{\tau}^{(i-1)}_A}+\mathring{\tau}^{(i-1)}_A,\text{ for }i\geq 2\text{ iteratively.}\] 

For simplicity, we write $\tau_a=\tau_{\{a\}},\mathring{\tau}_a=\mathring{\tau}_{\{a\}}$ and $\mathring{\tau}^{(i)}_a=\mathring{\tau}^{(i)}_{\{a\}}$. We also write $i\wedge j:=\min\{i,j\}$ for any real number $i,j$. In particular, $\tau_A\wedge\tau_{A'}=\tau_{A\cup A'}$.\vspace{0.15cm}

\noindent\textbf{Remark.} We say that the Markov chain $(\Omega,X_n,\mbP_x)$ is \textbf{irreducible} if for any $x,y\in V$ there is $n\geq 0$ such that $P_n(x,y)>0$. In this case, $\mathbb{P}_x(\mathring{\tau}_A<\infty)=1$ for any $\emptyset\neq A\subset V$ and $x\in V$.\vspace{0.15cm}

Next, we introduce some terminologies that will help simplify the notations in the forthcoming proofs in Subsection \ref{sec2.2}. 
\begin{definition}\label{def23}
	Let	$\bm{w}=(w_0,w_1,\cdots,w_\eta)$ be a finite path on $V$. 
	
	(a). We say $\bw$ is self-avoiding if the points $w_0,w_1,\cdots,w_\eta$ are distinct.
	
	(b). Let $I=(n_0,n_1,\cdots,n_\iota)\subset \{1,2,\cdots,\eta\}$, where we order $0\leq n_0<n_1<\cdots<n_\iota\leq \eta$. We define $\bw|_I=(w_{n_0},w_{n_1},\cdots,w_{n_\iota})$, which is also a finite path on $V$.
	
	(c). We define the reversal $\mfR(\bw)$ of $\bw$ as $\mfR(\bw)=(w_\eta,w_{\eta-1},\cdots,w_1,w_0)$. 
	
	(d). Let $\bm{w}'=(w'_0,w'_1,\cdots,w'_{\eta'})$ be a finite path such that $w'_0=w_\eta$, we define the concatenation of $\bm{w}$ and $\bm{w}'$ as 
	\[\bm{w}\oplus \bm{w}'=(w_1,w_2,\cdots,w_{\eta-1},w_{\eta},w'_1,\cdots,w'_{\eta'}).\]
\end{definition}

In particular, with a slight abuse of notation, when we talk about a finite path, we write $[s,t]=\{s,s+1,\cdots,t\}$ for short. We also view the Markov chain $X$ as an infinite path, so the notation $X|_{[s,t]}:=(X_s,X_{s+1},\cdots,X_t)$ (or $X|_{[s,t]}(\omega):=\big(X_s(\omega),X_{s+1}(\omega),\cdots,X_t(\omega)\big)$ if we want to emphasis that we fix an $\omega\in \Omega$) is consistent with  Definition \ref{def23} (b). 

Finally, as a brief review of the LE and PLE operators, we look at one simple example. 

\begin{example}\label{example22}
	Let $V=\{a,b,c,d,e\}$, $\widetilde{V}=\{a,c,d,e\}$ and $\bw=(a,b,c,d,b,e,d)$. 
	
	(a). One can see the LE of $\bw$ is 
	\[\mfL(\bw)=(a,b,e,d),\quad \mfI(\bw)=(1,2,6,7).\]
	
	(b). The PLE associated with $\widetilde{V}$ of $\bw$ is 
	\[\mfL_{\widetilde{V}}(\bw)=(a,b,c,d),\quad \mfI_{\widetilde{V}}(\bw)=(1,2,3,4).\]
	As a consequence, $\mfL\circ \mfL_{\widetilde{V}}(\bw)=(a,b,c,d)$.
	
	In particular, $\mfL(\bw)\neq \mfL\circ \mfL_{\widetilde{V}}(\bw)$ in this example.  
\end{example}

\subsection{Proof of Theorem \ref{th1}}\label{sec2.2} In this subsection, we prove Theorem \ref{th1}.\vspace{0.1cm}

As a first step, we prove a simple case of Theorem \ref{th1}, when $m=2$ and $V_1=V\setminus \{b\}$ for some $b\in V$. For the proof, we introduce two different ways of erasing loops, which we call \textbf{Algorithm 1} and \textbf{Algorithm 2}.\vspace{0.15cm}

\noindent\textbf{Alogrithm 1.} We define $L^{(1)}$ depending on two different cases.

\noindent\emph{Case 1. $\mfL(X|_{[0,\tau_A]})$ doesn't hit $b$.}  In this case, we simply let $L^{(1)}=\mfL(X|_{[0,\tau_A]})$.

\noindent\emph{Case 2. $\mfL(X|_{[0,\tau_A]})$ hits $b$.} In this case, we can find a unique index $n_j$, from the index set  $\mfI(X|_{[0,\tau_A]})=(n_0,n_1,\cdots,n_\iota)$, such that $X_{n_j}=b$. We define 
\[L^{(1)}=(X_{n_0},X_{n_1},\cdots,X_{n_j})\oplus\big(\mfR\circ\mfL\circ\mfR(X|_{[n_j,\tau_A]})\big).\]
In other words, before $n_j$ we erase loops in chronological order, and after $n_j$ we erase loops in reverse order (\textbf{Warning: $n_j\neq\tau_b$ in general}).\vspace{0.15cm}

\noindent\textbf{Alogrithm 2.} We first define $\widetilde{L}^{(2)}$ depending on two different cases. Then, let $L^{(2)}=\mfL(\widetilde{L}^{(2)})$. 

\noindent\emph{Case 1. $\mfL(X|_{[0,\tau_A]})$ doesn't hit $b$.}  In this case, we simply $\widetilde{L}^{(2)}=\mfL_{V_1}(X|_{[0,\tau_A]})$.

\noindent\emph{Case 2. $\mfL(X|_{[0,\tau_A]})$ hits $b$.}  In this case, we can find a unique index $n_j$, from the index set  $\mfI(X|_{[0,\tau_A]})=(n_0,n_1,\cdots,n_\iota)$, such that $X_{n_j}=b$.  We define 
\[\widetilde{L}^{(2)}=(X_{n_0},X_{n_1},\cdots,X_{n_j})\oplus \big(\mfR\circ\mfL_{V_1}\circ\mfR(X|_{[n_j,\tau_A]})\big).\]
In other words, before $n_j$ we erase loops in chronological order, and after $n_j$ we erase loops partially (associated with $V_1$) in reverse order.\vspace{0.15cm}

Although \textbf{Algorithm 2} looks more complicated than \textbf{Algorithm 1}, we claim that the two algorithms are the same (thus have the same distribution).

\begin{lemma}\label{lemma24}
	$L^{(1)}=L^{(2)}$. 
\end{lemma}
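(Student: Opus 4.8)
The plan is to show $L^{(1)}=L^{(2)}$ by a careful case analysis keyed to whether the loop-erased path $\mfL(X|_{[0,\tau_A]})$ hits $b$, and then, within Case~2, to a further comparison of the tails produced by reverse loop-erasure versus reverse partial loop-erasure associated with $V_1=V\setminus\{b\}$. In Case~1, both algorithms produce paths whose underlying chronological structure before $n_\iota$ is the same; the only point is that $\mfL_{V_1}(X|_{[0,\tau_A]})$, when post-composed with nothing on the left-hand side but noting $L^{(2)}=\mfL(\widetilde L^{(2)})$, must be shown to reduce to $\mfL(X|_{[0,\tau_A]})$. The key observation here is that if the full loop erasure never visits $b$, then $b$ contributes no ``surviving'' vertex, so partially erasing loops only at $V\setminus\{b\}$ leaves a path all of whose $b$-excursions are still present but which, after a final full $\mfL$, collapses to exactly $\mfL(X|_{[0,\tau_A]})$. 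I would make this precise using the standard fact that $\mfL\circ\mfL_{\widetilde V}=\mfL$ as operators on finite paths (which follows directly from the definitions, since full loop erasure is insensitive to any prior partial erasure), so that $L^{(2)}=\mfL(\mfL_{V_1}(X|_{[0,\tau_A]}))=\mfL(X|_{[0,\tau_A]})=L^{(1)}$.

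For Case~2, I would first fix the index $n_j$ with $X_{n_j}=b$ in $\mfI(X|_{[0,\tau_A]})=(n_0,\dots,n_\iota)$ and decompose $X|_{[0,\tau_A]}$ at $n_j$ into a head $X|_{[0,n_j]}$ and a tail $X|_{[n_j,\tau_A]}$. On the head, both algorithms apply ordinary chronological loop erasure and, crucially, the portion of $\mfI(X|_{[0,\tau_A]})$ lying in $[0,n_j]$ coincides with $\mfI(X|_{[0,n_j]})$ — this is the standard ``prefix property'' of loop erasure: the chronological loop-erasing algorithm run on $X|_{[0,\tau_A]}$ agrees, up to the last visit to $X_{n_j}=b$, with the algorithm run on the truncated path. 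Hence the heads $(X_{n_0},\dots,X_{n_j})$ are literally identical in $L^{(1)}$ and $\widetilde L^{(2)}$. The work is therefore concentrated on the tails: I need to compare $\mfR\circ\mfL\circ\mfR(X|_{[n_j,\tau_A]})$ with $\mfL\bigl(\text{head}\oplus\mfR\circ\mfL_{V_1}\circ\mfR(X|_{[n_j,\tau_A]})\bigr)$, after accounting for the outer $\mfL$ in $L^{(2)}$.

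The heart of the argument, and what I expect to be the main obstacle, is the following claim about the tail: writing $\bm v=X|_{[n_j,\tau_A]}$, the path $\bm v$ starts at $b$ and (since $n_j$ was defined as one past the \emph{last} visit to $b$ before index $n_{j+1}$, but not past the last visit to $b$ overall — recall the warning $n_j\neq\tau_b$ in general) may return to $b$ several more times. Reverse full loop-erasure of $\bm v$ erases all loops including those based at $b$, while reverse \emph{partial} loop-erasure associated with $V_1=V\setminus\{b\}$ leaves the $b$-excursions of $\bm v$ untouched and only removes loops at other vertices. The claim is that applying a final full $\mfL$ (in chronological order, as in the definition of $L^{(2)}$) to the concatenation $\text{head}\oplus\mfR\circ\mfL_{V_1}\circ\mfR(\bm v)$ removes exactly those residual $b$-loops and reproduces $L^{(1)}$. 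Here I would argue that: (i) the head ends at $b$ and contains no other occurrence of $b$ that survives to affect the tail's erasure, so the outer $\mfL$ acting on the concatenation, when it reaches the first coordinate equal to $b$, jumps to the last coordinate equal to $b$; (ii) after that jump we are looking at $\mfL$ applied to the sub-path of $\mfR\circ\mfL_{V_1}\circ\mfR(\bm v)$ strictly after its last $b$; and (iii) on a path that starts at $b$, visits $b$ only finitely often, and otherwise has had its non-$b$ loops reverse-erased, one checks — by comparing the chronological loop-erasing algorithm with the reverse one on the $b$-free portions between consecutive visits to $b$ — that the result equals $\mfR\circ\mfL\circ\mfR(\bm v)$. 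Step (iii) is delicate because chronological and reverse loop erasure differ in general; the point is that on the relevant sub-path the only vertex that can be visited more than once ``globally'' is $b$ (all other multiplicities were killed by $\mfL_{V_1}$ in reverse), so the two erasure orders act identically there, and the $b$-multiplicities are resolved identically by the final $\mfL$ in both constructions. I would formalize this with an induction on the number of visits of $\bm v$ to $b$, peeling off one $b$-excursion at a time and using the concatenation identity $\mfL(\bm w\oplus\bm w')$ in terms of $\mfL(\bm w)$ and $\bm w'$ from Definition~\ref{def23}(d).
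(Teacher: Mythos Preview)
Your Case~1 argument contains a genuine error. You invoke the ``standard fact that $\mfL\circ\mfL_{\widetilde V}=\mfL$ as operators on finite paths,'' but this identity is \emph{false}: Example~\ref{example22} in the paper exhibits a path $\bw$ and a set $\widetilde V$ with $\mfL\circ\mfL_{\widetilde V}(\bw)\neq\mfL(\bw)$. Full loop erasure is very much \emph{not} insensitive to prior partial erasure. The correct argument, which the paper uses, is different and stronger: if $\mfL(X|_{[0,\tau_A]})$ never hits $b$, then $\mfL_{V_1}(X|_{[0,\tau_A]})=\mfL(X|_{[0,\tau_A]})$ outright (not merely after a further $\mfL$). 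This is because the two algorithms produce identical index sequences $n_0,n_1,\ldots$ step by step---they only diverge at a step where the current vertex is $b$, and by hypothesis that never occurs. Hence $\widetilde L^{(2)}$ is already self-avoiding and $L^{(2)}=\mfL(\widetilde L^{(2)})=\widetilde L^{(2)}=L^{(1)}$.

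Your Case~2 outline is in the right direction but significantly more convoluted than necessary, and your step~(iii) is not clearly correct as stated. The paper avoids any induction on the number of visits to $b$ by making one clean structural observation: since $\mfR(\bm v)$ \emph{ends} at $b$, the full and partial loop erasures of $\mfR(\bm v)$ agree step by step until the first time they land on $b$; at that moment $\mfL$ jumps to the final $b$ and terminates, while $\mfL_{V_1}$ simply continues. Thus $\mfL\circ\mfR(\bm v)=(w_0,\ldots,w_{j'},b)$ is literally a \emph{prefix} of $\mfL_{V_1}\circ\mfR(\bm v)=(w_0,\ldots,w_{j'},b,v_0,\ldots,v_{j''})$, with $v_{j''}=b$ when $j''\geq 0$. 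Reversing and concatenating with the head, the extra block $(v_{j''},\ldots,v_0)$ sits between the head's terminal $b$ and the next $b$ in $\widetilde L^{(2)}$, so the outer $\mfL$ erases it in one step, leaving exactly $(X_{n_0},\ldots,X_{n_{j-1}},b,w_{j'},\ldots,w_0)=L^{(1)}$. No induction, no comparison of forward/reverse erasure on sub-segments, is needed.
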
 
\begin{proof}
If $\mfL(X|_{[0,\tau_A]})$ doesn't hit $b$, one can check that $\widetilde{L}^{(2)}=\mfL(X|_{[0,\tau_A]})$. Hence, $L^{(2)}=\mfL(\widetilde{L}^{(2)})=\mfL(X|_{[0,\tau_A]})=L^{(1)}$ in this case. It remains to consider the case that $\mfL(X|_{[0,\tau_A]})$ hits $b$. In this case, $\mfL\circ\mfR(X|_{[n_j,\tau_A]})$ and $\mfL_{V_1}\circ\mfR(X|_{[n_j,\tau_A]})$ take the form 
\[\begin{cases}
\mfL\circ\mfR(X|_{[n_j,\tau_A]})=(w_0,w_1,\cdots,w_{j'},b),\\
\mfL_{V_1}\circ\mfR(X|_{[n_j,\tau_A]})=(w_0,w_1,\cdots,w_{j'},b,v_0,\cdots,v_{j''}),
\end{cases}
\] 
for some $j',j''\geq -1$, some $w_0,\cdots,w_{j'}\in V\setminus \{b, X_{n_0},\cdots,X_{n_{j-1}}\}$ and some $v_0,\cdots,v_{j''}\in V\setminus \{X_{n_0},\cdots,X_{n_{j-1}}\}$ with $v_{j''}=b$ (if $j''\geq 0$). Hence 
\[\begin{aligned}
L^{(2)}&=\mfL(\widetilde{L}^{(2)})=\mfL\big((X_{n_0},X_{n_1},\cdots,X_{n_j})\oplus \big(\mfR\circ\mfL_{V_1}\circ\mfR(X|_{[n_j,\tau_A]})\big)\big)\\
&=\mfL(X_{n_0},X_{n_1},\cdots,X_{n_{j-1}},X_{n_j}=b=v_{j''},\cdots,v_0,b,w_{j'},\cdots,w_1,w_0)\\
&=(X_{n_0},X_{n_1},\cdots,X_{n_{j-1}},b,w_{j'},\cdots,w_1,w_0)\\
&=(X_{n_0},X_{n_1},\cdots,X_{n_{j-1}},X_{n_j})\oplus \big(\mfR\circ\mfL\circ\mfR(X|_{[n_j,\tau_A]})\big)=L^{(1)}.
\end{aligned}\]
\end{proof}

It remains to show that $L^{(1)}$ and $\mfL(X|_{[0,\tau_A]})$ have the same law, and show that $L^{(2)}$ and $\mfL\circ\mfL_{V_1}(X|_{[0,\tau_A]})$ have the same law. We will use an idea of Lawler \cite{L5} to compute the probability of the paths with Green's function. In particular, Lawler successfully showed that $\mfR\circ \mfL\circ \mfR(X|_{[0,\tau_A]})$ and $\mfL(X|_{[0,\tau_A]})$ have the same law. \vspace{0.1cm}
 
\noindent\textbf{Green's function.} Let $B\subsetneq V$, we define the Green's function on $B^2=B\times B$ as 
\[G_B(x,y)=\mathbb{E}_x(\#\{0\leq n<\tau_{V\setminus B}:X_n=y\}),\qquad \forall x,y\in  B.\]
By the strong Markov property, $G_B(x,y)=\mathbb{P}_{x}(\tau_y<\tau_{V\setminus B})G_B(y,y)$.\vspace{0.1cm}

\begin{lemma}[\cite{L5}]\label{lemma25}
Let $B\subsetneq V$, $x,y\in B$ such that $x\neq y$, then
  $G_{B\setminus\{y\}}(x,x)\cdot G_B(y,y)=G_B(x,x)\cdot G_{B\setminus\{x\}}(y,y)$. 
\end{lemma}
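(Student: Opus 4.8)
The plan is to prove the identity $G_{B\setminus\{y\}}(x,x)\cdot G_B(y,y)=G_B(x,x)\cdot G_{B\setminus\{x\}}(y,y)$ by rewriting both sides as ``counting'' quantities that are visibly symmetric in $x$ and $y$. Using the last-exit / return-structure of Green's functions, I would first record two basic decomposition formulas. Writing $q_B(x)=\mathbb{P}_x(\mathring{\tau}_x<\tau_{V\setminus B})$ for the probability of returning to $x$ before exiting $B$, one has $G_B(x,x)=\bigl(1-q_B(x)\bigr)^{-1}$. The key decomposition is obtained by conditioning on whether the chain, started at $x$ and killed on exiting $B$, visits $y$ before returning to $x$: splitting the excursions from $x$ according to whether they touch $y$, one gets a clean relation between $q_B(x)$, $q_{B\setminus\{y\}}(x)$, and the hitting probabilities $\mathbb{P}_x(\tau_y<\mathring{\tau}_x\wedge \tau_{V\setminus B})$ and $\mathbb{P}_y(\tau_x<\mathring{\tau}_y\wedge\tau_{V\setminus B})$, etc.

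Concretely, the first step is to prove the identity
\[
G_B(x,x)=G_{B\setminus\{y\}}(x,x)+\mathbb{P}_x(\tau_y<\tau_{V\setminus B})\,G_B(y,y)\,\mathbb{P}_y(\tau_x<\tau_{V\setminus B}),
\]
or rather the version of it phrased with strict return times, which comes from decomposing the occupation time at $x$ before exiting $B$ into the part accrued during excursions from $x$ that avoid $y$ (these sum to $G_{B\setminus\{y\}}(x,x)$) and the part accrued after the first visit to $y$. Then the second step is to manipulate this into the symmetric form. Set $a=\mathbb{P}_x(\tau_y<\tau_{V\setminus B})$ and $b=\mathbb{P}_y(\tau_x<\tau_{V\setminus B})$. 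Using $G_B(x,y)=a\,G_B(y,y)$ and $G_B(y,x)=b\,G_B(x,x)$ together with the identity just displayed, I expect to arrive at
\[
G_B(x,x)-G_{B\setminus\{y\}}(x,x)=a\,b\,G_B(x,x)G_B(y,y)\cdot(\text{something symmetric}),
\]
and the same computation with $x,y$ interchanged gives $G_B(y,y)-G_{B\setminus\{x\}}(y,y)$ equal to the \emph{same} quantity. Rearranging then yields exactly $G_{B\setminus\{y\}}(x,x)\,G_B(y,y)=G_B(x,x)\,G_{B\setminus\{x\}}(y,y)$.

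An alternative and perhaps cleaner route, which I would present if the bookkeeping above gets heavy, is to observe that both sides are symmetric expressions in the pair $\{x,y\}$ after one notes the ``two-point'' formula
\[
G_B(x,x)\,G_B(y,y)-G_B(x,y)\,G_B(y,x)=G_{B\setminus\{y\}}(x,x)\,G_B(y,y)=G_{B\setminus\{x\}}(y,y)\,G_B(x,x).
\]
That is, the determinant of the $2\times2$ Green matrix on $\{x,y\}$ factors in two ways, and each factorization drops one point from $B$; the middle and right expressions are manifestly what we want. So the real content is the single identity $G_B(x,x)\,G_B(y,y)-G_B(x,y)\,G_B(y,x)=G_{B\setminus\{y\}}(x,x)\,G_B(y,y)$, which follows from $G_B(x,y)=\mathbb{P}_x(\tau_y<\tau_{V\setminus B})\,G_B(y,y)$ and the occupation-time decomposition of the first step.

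The main obstacle I anticipate is purely organizational: getting the ``first visit to $y$'' decomposition of the occupation time at $x$ stated correctly, being careful about whether one uses $\tau$ (entry time) or $\mathring{\tau}$ (hitting time) at each occurrence, and about the degenerate cases (e.g. $x$ or $y$ never reached, or $B\setminus\{y\}$ making $x$ a trap). Once that one identity is pinned down, the algebra to reach the symmetric statement is routine. I would also double-check the edge case where $\mathbb{P}_x(\tau_y<\tau_{V\setminus B})=0$, in which $G_{B\setminus\{y\}}(x,x)=G_B(x,x)$ and symmetrically $\mathbb{P}_y(\tau_x<\tau_{V\setminus B})=0$, so both sides reduce to $G_B(x,x)G_B(y,y)$ and the identity holds trivially.
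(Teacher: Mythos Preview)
Your approach is essentially identical to the paper's: decompose $G_B(x,x)$ according to the first visit to $y$ to obtain $G_{B\setminus\{y\}}(x,x)=(1-ab)\,G_B(x,x)$ with $a,b$ the two hitting probabilities, then conclude by the $x\leftrightarrow y$ symmetry of the factor $1-ab$. One slip to fix: in your displayed decomposition the last factor should be $G_B(x,x)$ (equivalently, the middle factor should be $G_B(y,x)$), not $G_B(y,y)$ --- after first reaching $y$ you are counting visits to $x$ before exiting $B$, which is $G_B(y,x)=b\,G_B(x,x)$.
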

\begin{proof}
For the convenience of readers, we provide a proof here. By the strong Markov property,
\[
\begin{aligned}
&G_B(x,x)=\mathbb{E}_x(\#\{0\leq n<\tau_{V\setminus B}:X_n=x\})\\
=&\mathbb{E}_x(\#\{0\leq n<\tau_{V\setminus B}\wedge \tau_y:X_n=x\})+\mathbb{E}_x(\#\{\tau_y\leq n<\tau_{V\setminus B}:X_n=x\})\\
=&G_{B\setminus\{y\}}(x,x)+\mathbb{P}_{x}(\tau_y<\tau_{V\setminus B})G_B(y,x),
\end{aligned}
\]
Hence, $G_{B\setminus\{y\}}(x,x)=\big(1-\mbP_x(\tau_y<\tau_{V\setminus B})\mbP_y(\tau_x<\tau_{V\setminus B})\big)G_B(x,x)$. We also have $G_{B\setminus\{x\}}(y,y)=\big(1-\mbP_y(\tau_x<\tau_{V\setminus B})\mbP_x(\tau_y<\tau_{V\setminus B})\big)G_B(y,y)$ by a same argument. The lemma follows immediately.
\end{proof}

The observation of Lawler \cite{L5}, by using Lemma \ref{lemma25} multiple times, is \[F_B(x_0,x_1,\cdots,x_n)=F_B(x_{\sigma(0)},x_{\sigma(1)},\cdots,x_{\sigma(n)}),\]
for any $n\geq 1$, any $\{y_0,y_1,\cdots,y_n\}\subset B\subsetneq V$, and any permutation $\sigma$ of $\{0,1,\cdots,n\}$, where \[F_B(y_0,y_1,\cdots,y_n)=G_B(y_0,y_0)\cdot G_{B\setminus \{y_0\}}(y_1,y_1)\cdots G_{B\setminus \{y_0,\cdots,y_{n-1}\}}(y_n,y_n).\]

\begin{proposition}\label{prop26}
	Let $\big(\Omega,X_n,\mbP_x\big)$ be a Markov chain on a finite set $V$. Let $b\in V$, $V_1=V\setminus \{b\}$ and $V_2=V$. Then for any $A\subset V$ and $x\in V$ such that $\mathbb{P}_x(\tau_A<\infty)=1$, we have 
	\[\mbP_x\big(\mfL(X|_{[0,\tau_A]})\in \bullet\big)=\mbP_x\big(\mfL_{V_2}\circ \mfL_{V_1}(X|_{[0,\tau_A]})\in \bullet\big).\]
\end{proposition}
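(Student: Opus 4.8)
The plan is to prove Proposition \ref{prop26} via the ``partial reversal'' decomposition already introduced, and then compute path probabilities with Green's functions in the spirit of Lawler. By Lemma \ref{lemma24} we have $L^{(1)}=L^{(2)}$ almost surely, so it suffices to establish two independent distributional identities: first, that $L^{(1)}$ has the same law as $\mfL(X|_{[0,\tau_A]})$; and second, that $L^{(2)}$ has the same law as $\mfL_{V_2}\circ\mfL_{V_1}(X|_{[0,\tau_A]})=\mfL\circ\mfL_{V_1}(X|_{[0,\tau_A]})$. Chaining these three equalities gives the result.

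For the first identity, I would condition on whether the loop erasure hits $b$. On the event of Case 1 there is nothing to do since $L^{(1)}=\mfL(X|_{[0,\tau_A]})$ there. On Case 2, fix a target self-avoiding path $\gamma=(\gamma_0,\dots,\gamma_k)$ from $x$ to $A$ that passes through $b$, say $\gamma_p=b$. The key structural fact is the strong Markov property applied at the time $n_j$ (the unique index in $\mfI(X|_{[0,\tau_A]})$ with $X_{n_j}=b$): the initial segment $(X_{n_0},\dots,X_{n_j})$ is the standard loop erasure of the chain run from $x$ until it ``commits'' to $b$, while $X|_{[n_j,\tau_A]}$ is, conditionally, a fresh chain started at $b$ and killed at $A$ that avoids $\{\gamma_0,\dots,\gamma_{p-1}\}$ up to the relevant stopping time. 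Then I would invoke Lawler's theorem that $\mfR\circ\mfL\circ\mfR$ and $\mfL$ have the same law for a chain killed on exiting a set (stated in the excerpt), applied to this second segment run in the domain $V\setminus\{\gamma_0,\dots,\gamma_{p-1}\}$; this shows the probability of $L^{(1)}=\gamma$ equals the probability of $\mfL(X|_{[0,\tau_A]})=\gamma$. The bookkeeping is cleanest if one writes both probabilities as products over the vertices of $\gamma$ of terms of the form $\mbP(\tau_{\gamma_{i+1}}<\tau_{\cdots})$ times a Green's-function factor, using $G_B(x,y)=\mbP_x(\tau_y<\tau_{V\setminus B})G_B(y,y)$ and the permutation-invariance of $F_B$.

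For the second identity, I would carry out the analogous computation for $\widetilde L^{(2)}$: before $n_j$ we loop-erase chronologically, after $n_j$ we apply $\mfL_{V_1}$ in reverse. The point of applying $\mfL_{V_1}$ rather than $\mfL$ is precisely that $\mfL_{V_1}$ does not erase loops based at $b$, so when we later apply the outer $\mfL$ to $\widetilde L^{(2)}$, all the $b$-loops of $\mfR(X|_{[n_j,\tau_A]})$ get erased at once in chronological order, reproducing exactly what $\mfL\circ\mfL_{V_1}$ does to the whole path. I would make this precise by comparing the index sets: $\mfL\circ\mfL_{V_1}(X|_{[0,\tau_A]})$ first removes every loop not based at $b$ (leaving the chronological skeleton through $b$ intact, possibly visiting $b$ many times), then collapses the excursions at $b$; and one checks that splitting this two-stage procedure at $n_j$ and time-reversing the second half does not change the resulting path, again by Lawler's reversal lemma applied to the appropriate sub-domain. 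So $L^{(2)}$ has the law of $\mfL\circ\mfL_{V_1}(X|_{[0,\tau_A]})$.

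The main obstacle I expect is the careful identification of the ``pivot'' index $n_j$ and the conditional law of the chain after $n_j$. The subtlety flagged in the excerpt ($n_j\neq\tau_b$ in general) means one cannot simply say ``run the chain to $b$ and restart''; rather, $n_j$ is defined by looking at the \emph{future} of the path (it is one past the last visit to $b$ before the loop erasure would leave $b$ for good), so the decomposition at $n_j$ is not a stopping-time decomposition and the strong Markov property does not apply directly. The standard fix, which I would adopt, is to instead condition on the full loop-erased path and use the explicit Green's-function formula for the probability that loop erasure equals a given self-avoiding path; with that formula in hand, both identities reduce to the permutation-invariance of $F_B(\cdot)$ together with Lawler's reversal result, and no genuine Markov-property-at-$n_j$ argument is needed. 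Verifying that the reversal of the post-$n_j$ segment interacts correctly with the already-erased prefix $\{\gamma_0,\dots,\gamma_{p-1}\}$ — i.e. that the relevant domain for the reversed segment is $V\setminus\{\gamma_0,\dots,\gamma_{p-1}\}$ and that all Green's functions are taken in this domain — is the delicate combinatorial point, but it is exactly the content that Lemma \ref{lemma24} and Lemma \ref{lemma25} are designed to handle.
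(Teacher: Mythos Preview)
Your plan matches the paper's proof: compute $\mbP_x(L^{(1)}=\bw)$ and $\mbP_x(\mfL(X|_{[0,\tau_A]})=\bw)$ as explicit Green's-function products and equate them via the permutation invariance of $F_B$ (Lemma~\ref{lemma25}), then do the analogous computation for the second identity, and chain through Lemma~\ref{lemma24}; you also correctly diagnose that $n_j$ is not a stopping time and that the fix is to work directly with the Green's-function formula rather than the strong Markov property.

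One sharpening on the second identity: the clean route (and the paper's) is to prove $\widetilde L^{(2)}\stackrel{d}{=}\mfL_{V_1}(X|_{[0,\tau_A]})$ directly---both sides have explicit Green's-function product formulas over paths $\bw$ satisfying $\mfL_{V_1}(\bw)=\bw$, and Lemma~\ref{lemma25} again matches them---and then apply the deterministic map $\mfL$ to both sides to get $L^{(2)}\stackrel{d}{=}\mfL\circ\mfL_{V_1}(X|_{[0,\tau_A]})$. Your middle paragraph's phrasing (``splitting this two-stage procedure at $n_j$ and time-reversing the second half does not change the resulting path'') drifts toward a \emph{pathwise} claim, which is false in general; stick to the purely distributional Green's-function computation you already commit to in your final paragraph.
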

\begin{proof}
We only need to consider the case that $b,x\notin A$. For short, we write $A	^c=V\setminus A$. 
	
First, we show $\mathbb{P}_x(L^{(1)}=\bw)=\mathbb{P}_x\big(\mfL(X|_{[0,\tau_A]})=\bw\big)$, where we fix a finite self-avoiding path $\bw=(w_0,w_1,\cdots,w_\eta)$ with $w_0=x$, $w_\eta\in A$ and $w_n\in A^c,\forall 0\leq n<\eta$. Noticing that $\mfL(X|_{[0,\tau_A]})$ hits $b$ if and only if $L^{(1)}$ hits $b$, and $\mfL(X|_{[0,\tau_A]})=L^{(1)}$ in this case, we only need to consider those $\bw$ such that $w_j=b$ for some $0\leq j<\eta$. Following  Lawler \cite{L5}, we use the Green's function to compute the probability, and for the convenience of readers, we show some details when we do this for the first time: 
\[\begin{aligned}
&\mbP_x\big(\mfL(X|_{[0,\tau_A]})=\bw\big)\\
=&\sum_{0<l_1<\cdots<l_\eta}\mbP_x\big(\mfL(X|_{[0,\tau_A]})=\bw,\ \mfI(X|_{[0,\tau_A]})=(0,l_1,\cdots,l_\eta)\big)\\
=&\sum_{0<l_1<\cdots<l_\eta}\mbP_{w_0}\big(X_{l_1-1}=w_0,\tau_A>l_1-1\big)\cdot P(w_0,w_1)\\
&\qquad\qquad\quad\cdot\mbP_{w_1}\big(X_{l_2-l_1-1}=w_1,\tau_{A\cup \{w_0\}}>l_2-l_1-1\big)\cdots P(w_{\eta-1},w_\eta)\\
=&\prod_{n=0}^{\eta-1}\big(G_{A^c\setminus \{w_0,\cdots,w_{n-1}\}}(w_n,w_n)\cdot P(w_n,w_{n+1})\big).
\end{aligned}\] 
By a similar argument, one can also see
\[
\begin{aligned}
\mbP_x(L^{(1)}=\bw)=&\big(\prod_{n=0}^{\eta-1}P(w_n,w_{n+1})\big)\cdot\big(\prod_{n=0}^{j-1}G_{A^c\setminus \{w_0,\cdots,w_{n-1}\}}(w_n,w_n)\big)\\
&\cdot \big(\prod_{n=1}^{\eta-j}G_{A^c\setminus  \{w_0,\cdots,w_{j-1},w_{\eta-1},\cdots,w_{\eta-n+1}\}}(w_{\eta-n},w_{\eta-n})\big),
\end{aligned}
\]
where $w_{\eta-1},\cdots w_{\eta-n+1}$ is the empty sequence if $n=1$. Hence, $\mathbb{P}_x(L^{(1)}=\bw)=\mathbb{P}_x\big(\mfL(X|_{[0,\tau_A]})=\bw\big)$ by applying Lemma \ref{lemma25}.

Next, we show $\mathbb{P}_x(L^{(2)}\in\bullet)=\mathbb{P}_x\big(\mfL\circ\mfL_{V_1}(X|_{[0,\tau_A]})\in\bullet\big)$. It suffices to show  $\mathbb{P}_x(\widetilde{L}^{(2)}\in\bullet)=\mathbb{P}_x\big(\mfL_{V_1}(X|_{[0,\tau_A]})\in\bullet\big)$. Let $\bw=(w_0,w_1,\cdots,w_\eta)$ be a finite path with $w_0=x$, $w_\eta\in A$  and $w_n\in A^c,\forall 0\leq n<\eta$, and in addition $\mfL_{V_1}(\bw)=\bw$. Still, we are only interested in the case that $\bw$ hits $b$, and we let $(w_{n_0},w_{n_1},\cdots,w_{n_\iota})$ be the subpath of $\bw$ that consists of all the elements that $\bw$ hits in $V_1$ in chronological order. Similar calculations as before imply
\[\begin{aligned}
	\mbP_x\big(\mfL_{V_1}(X|_{[0,\tau_A]})=\bw\big)=\big(\prod_{n=0}^{\eta-1}P(w_i,w_{i+1})\big)\big(\prod_{i=0}^{\iota-1}G_{A^c\setminus  \{w_{n_0},\cdots,w_{n_{i-1}}\}}(w_{n_i},w_{n_i})\big) ,
\end{aligned}\]
To compute the probability of $L^{(2)}=\bw$, we need a little more information about $\bw$: we assume $w_0=0,\cdots,w_{n_{j-1}}=j-1$ and $w_j=b$, which means $w_j$ is the first term of $\bw$ that takes the value $b$. Then by some careful computations as before, one can see
\[
\begin{aligned}
	\mbP_x(\widetilde{L}^{(2)}=\bw)=&\big(\prod_{n=0}^{\eta-1}P(w_n,w_{n+1})\big)\cdot\big(\prod_{i=0}^{j-1}G_{A^c\setminus \{w_{n_0},\cdots,w_{n_{i-1}}\}}(w_{n_i},w_{n_i})\big)\\
	&\cdot \big(\prod_{i=1}^{\iota-j}G_{A^c\setminus  \{w_{n_0},\cdots,w_{n_{j-1}},w_{n_{\iota-1}},\cdots,w_{n_{\iota-i+1}}\}}(w_{n_{\iota-i}},w_{n_{\iota-i}})\big).
\end{aligned}
\]
Again by Lemma \ref{lemma25}, we can see $\mathbb{P}_x(\widetilde{L}^{(2)}=\bw)=\mathbb{P}_x\big(\mfL_{V_1}(X|_{[0,\tau_A]})=\bw\big)$.

Finally, one applies Lemma \ref{lemma24} to see that $\mfL(X|_{[0,\tau_A]})$, $L^{(1)}=L^{(2)}$ and $\mfL\circ\mfL_{V_1}(X|_{[0,\tau_A]})$ have the same distribution. 
\end{proof}

The next step is to iterate the simple case. The key observations are Lemma\ref{lemma27} and \ref{lemma28}, as easy consequences of the strong Markov property. 

\begin{lemma}\label{lemma27}
Let $(\Omega,X_n,P_x)$ be a Markov chain on a finite set $V$, and let $A\subset V$. Define $V_A=\{x\in V:\mathbb{P}_x(\tau_A<\infty)=1\}$ and $\Omega_A=\{\omega\in \Omega:X_n(\omega)\in V_A,\ \forall 0\leq n\leq \tau_A(\omega)\}$. Then, for each $\widetilde{V}\subset V$, $(\Omega_A,\widetilde{X}_n,\mathbb{P}_x)$ is a Markov chain on $V_A$, where $\widetilde{X}_0=X_0$ and $\widetilde{X}_n=X_{\mathring{\tau}_{\widetilde{V}}^{(n)}\wedge \tau_A}$ for $n\geq 1$. 
\end{lemma}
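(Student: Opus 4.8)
The plan is to write down an explicit transition kernel $\widetilde{P}$ on $V_A$ and to verify that $(\widetilde{X}_n)$ obeys the Markov property relative to it by applying the strong Markov property at the increasing sequence of stopping times $T_n:=\mathring{\tau}^{(n)}_{\widetilde{V}}\wedge\tau_A$ (set $T_0:=0$, so that $\widetilde{X}_n=X_{T_n}$ for all $n\ge 0$). Two preliminary facts are needed. First, $V_A$ is ``closed'' for the chain killed on entering $A$: if $x\in V_A$ then $\mathbb{P}_x(\Omega_A)=1$. Indeed $A\subseteq V_A$ trivially, and if the chain started at $x$ entered $V\setminus V_A$ with positive probability strictly before $\tau_A$, then applying the strong Markov property at $\sigma:=\tau_{V\setminus V_A}$ (noting $X_\sigma\notin A$ on $\{\sigma<\tau_A\}$, so $\tau_A=\sigma+\tau_A\circ\theta_\sigma$ there) would yield $\mathbb{P}_x(\tau_A=\infty)\ge\mathbb{E}_x[\mathbf{1}_{\{\sigma<\tau_A\}}\,\mathbb{P}_{X_\sigma}(\tau_A=\infty)]>0$, a contradiction. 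Consequently $\mathbb{P}_x$ restricts to a probability measure on $\Omega_A$ and, since $T_n\le\tau_A$, each $\widetilde{X}_n$ lies in $V_A$ a.s.\ on $\Omega_A$. Second, for $x\in V_A\setminus A$ one has $\mathring{\tau}_{\widetilde{V}}\wedge\tau_A=\mathring{\tau}_{\widetilde{V}\cup A}\le\tau_A<\infty$ $\mathbb{P}_x$-a.s.\ on $\Omega_A$, and $X_{\mathring{\tau}_{\widetilde{V}\cup A}}\in V_A$; hence the rule $\widetilde{P}(x,\cdot):=\delta_x$ for $x\in A$ and $\widetilde{P}(x,\cdot):=\mathbb{P}_x\big(X_{\mathring{\tau}_{\widetilde{V}\cup A}}\in\cdot\big)$ for $x\in V_A\setminus A$ defines a genuine Markov kernel on $V_A$.

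Next I would record the structural dichotomy for the stopped process: for each $n\ge 1$, either $T_n=\tau_A$, in which case $\widetilde{X}_m=X_{\tau_A}\in A$ for all $m\ge n$ (the trajectory is ``absorbed''), or else $T_n=\mathring{\tau}^{(n)}_{\widetilde{V}}<\tau_A$, in which case $\widetilde{X}_n=X_{\mathring{\tau}^{(n)}_{\widetilde{V}}}\in\widetilde{V}\setminus A$; in particular $\{\widetilde{X}_n\in A\}=\{T_n=\tau_A\}$. With this in hand the one-step Markov property goes as follows. Fix $y_0,\dots,y_{n+1}\in V_A$ with $\mathbb{P}_x(\widetilde{X}_0=y_0,\dots,\widetilde{X}_n=y_n)>0$. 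Since $T_0\le\cdots\le T_n$ and $\widetilde{X}_k=X_{T_k}$ is $\mathcal{F}_{T_k}$-measurable, the conditioning event belongs to $\mathcal{F}_{T_n}$, so it suffices to evaluate $\mathbb{P}_x(\widetilde{X}_{n+1}=y_{n+1}\mid\mathcal{F}_{T_n})$ on it. If $y_n\in A$ the trajectory is absorbed and this conditional probability equals $\delta_{y_n,y_{n+1}}=\widetilde{P}(y_n,y_{n+1})$. If $y_n\notin A$, then on $\{\widetilde{X}_n=y_n\}$ we have $T_n=\mathring{\tau}^{(n)}_{\widetilde{V}}<\tau_A$ and $X_{T_n}=y_n\notin A$, which forces $T_{n+1}=T_n+\mathring{\tau}_{\widetilde{V}\cup A}\circ\theta_{T_n}$ and hence $\widetilde{X}_{n+1}=\big(X_{\mathring{\tau}_{\widetilde{V}\cup A}}\big)\circ\theta_{T_n}$; the strong Markov property at $T_n$ then gives $\mathbb{P}_x(\widetilde{X}_{n+1}=y_{n+1}\mid\mathcal{F}_{T_n})=\mathbb{P}_{y_n}(X_{\mathring{\tau}_{\widetilde{V}\cup A}}=y_{n+1})=\widetilde{P}(y_n,y_{n+1})$ on that event. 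Integrating over the conditioning event gives the one-step Markov property, and since $\widetilde{P}$ does not depend on $n$ (the case $n=0$ being just the ordinary Markov property at $T_0=0$), $(\Omega_A,\widetilde{X}_n,\mathbb{P}_x)$ is a time-homogeneous Markov chain on $V_A$.

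I expect the only real work to be bookkeeping rather than anything conceptual: correctly peeling off the absorbed trajectories so that $\{\widetilde{X}_n\in A\}$ is identified with $\{T_n=\tau_A\}$ (including the degenerate case $\mathring{\tau}^{(n)}_{\widetilde{V}}=\infty$), justifying the identity $T_{n+1}=T_n+\mathring{\tau}_{\widetilde{V}\cup A}\circ\theta_{T_n}$ on the non-absorbed event (this is where one uses $X_{T_n}\notin A$ to replace $\tau_A\circ\theta_{T_n}$ by $\mathring{\tau}_A\circ\theta_{T_n}$, and uses membership in $\Omega_A$ to know these times are finite), and the routine checks that the $T_n$ are stopping times and that $\widetilde{X}_0,\dots,\widetilde{X}_n$ are $\mathcal{F}_{T_n}$-measurable. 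Notably the argument uses only the strong Markov property — none of the Green's-function identities of Lemma~\ref{lemma25} nor the loop-erasure combinatorics — which matches the lemma being billed as an easy consequence of the strong Markov property, and the same scheme should carry over verbatim to the companion Lemma~\ref{lemma28}.
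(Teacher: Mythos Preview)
Your proposal is correct and follows essentially the same approach as the paper: the paper's proof is a terse two-sentence sketch invoking $\mathbb{P}_x(\Omega_A)=1$, the strong Markov property, and the recursion $t_n=t_1\circ\theta_{t_{n-1}}+t_{n-1}$ with transition kernel $\widetilde{P}(x,y)=\mathbb{P}_x(X_{\mathring{\tau}_{\widetilde{V}}\wedge\tau_A}=y)$, and you have simply fleshed out each of these points (including the absorbed/non-absorbed dichotomy that the paper leaves implicit). Your identification $\mathring{\tau}_{\widetilde{V}}\wedge\tau_A=\mathring{\tau}_{\widetilde{V}\cup A}$ for starting points outside $A$ and your explicit justification of $\mathbb{P}_x(\Omega_A)=1$ are exactly the bookkeeping the paper omits.
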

\noindent\textbf{Remark.} Not all the points of $\widetilde{V}$ but only those in $V_A\cap \widetilde{V}$ are involved in the new process.
\begin{proof}
Clearly, $\mathbb{P}_x(\Omega_A)=1$ for each $x\in V_A$. The lemma then follows from the strong Markov property, and the fact $t_n=t_1\circ\theta_{t_{n-1}}+t_{n-1}$, where $t_0=0$ and $t_n=\mathring{\tau}_{\widetilde{V}}^{(n)}\wedge \tau_A$ for $n\geq 1$. In fact, the transition kernel associated with $(\Omega,\widetilde{X}_n,P_x)$ is $\widetilde{P}(x,y)=\mathbb{P}_x(X_{\mathring{\tau}_{\widetilde{V}}\wedge \tau_A}=y)$ for any $x,y\in V_A$.
\end{proof}

In the statement of the following Lemma \ref{lemma28}, the definition of $\tau_A$ depends on the process that we are talking about. 
\begin{lemma}\label{lemma28}
We assume the same settings as in Lemma \ref{lemma27}, and let $V_1\subset V_2=\widetilde{V}$. 

(a). For $\omega\in\Omega_A$ such that $X_0(\omega)\notin A$, we can decompose $\mfL_{V_2}\circ\mfL_{V_1}(X|_{[0,\tau_A]})$ conditioned on $\mfL_{V_2}\circ\mfL_{V_1}(\widetilde{X}|_{[0,\tau_A]})$ as follows,
\[\begin{cases}
	\mfL_{V_2}\circ\mfL_{V_1}(\widetilde{X}|_{[0,\tau_A]})=(y_0,y_1,\cdots,y_\iota),\\
	\mfL_{V_2}\circ\mfL_{V_1}(X|_{[0,\tau_A]})=\bm{v}_1\oplus\bm{v}_2\oplus\cdots\oplus\bm{v}_\iota,
\end{cases}
\]
where for $1\leq i\leq \iota$, $\bm{v}_i$ takes the form $\bm{v}_i=(y_{i-1},\cdots,y_i)$, with all points in `$\cdots$' lying in $V\setminus (\widetilde{V}\cup A)$. 

(b). $\bm{v}_i,1\leq i\leq \iota$ are mutually independent conditioned on $\mfL_{V_2}\circ\mfL_{V_1}(\widetilde{X}|_{[0,\tau_A]})$. In addition, for each $1\leq i\leq \iota$, 
\begin{align*}
&\mathbb{P}_x\big(\bm{v}_i\in \bullet\big|\mfL_{V_2}\circ\mfL_{V_1}(\widetilde{X}|_{[0,\tau_A]})=(y_0,\cdots,y_{i-1},y_i,\cdots,y_\iota),\iota\geq i\big)\\
=&\mathbb{P}_{y_{i-1}}\big(X|_{[0,\mathring{\tau}_{\widetilde{V}}\wedge \tau_A]}\in\bullet\big|\widetilde{X}_1=y_i\big). 
\end{align*}
\end{lemma}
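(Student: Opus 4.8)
The plan is to establish both parts simultaneously by tracking, for a fixed $\omega\in\Omega_A$ with $X_0(\omega)\notin A$, how the index set $\mfI_{V_2}\circ\mfI_{V_1}$ of the PLE composition interacts with the subsequence of times at which $X$ visits $\widetilde V=V_2$. The crucial structural observation is that $\mfL_{V_1}$ and $\mfL_{V_2}$ both leave fixed the times of \emph{first} visits to points of $\widetilde V$ in the appropriate sense: after applying $\mfL_{V_1}$, every point of $V_1$ that survives is visited exactly once (since $V_1\subset V_2=\widetilde V$, loops about all these points have been erased in the first pass), and then $\mfL_{V_2}$ erases the remaining loops about the points of $V_2\setminus V_1$. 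Consequently, the output path $\mfL_{V_2}\circ\mfL_{V_1}(X|_{[0,\tau_A]})$ visits each surviving point of $\widetilde V$ exactly once, and between consecutive such visits it only passes through points of $V\setminus(\widetilde V\cup A)$ — this is exactly the claimed decomposition $\bm v_1\oplus\cdots\oplus\bm v_\iota$ in part (a), with $y_0,\dots,y_\iota$ being precisely the points of $\widetilde V$ appearing (in order) on the erased path, hence precisely $\mfL_{V_2}\circ\mfL_{V_1}(\widetilde X|_{[0,\tau_A]})$ by the definition of $\widetilde X$ in Lemma~\ref{lemma27}.

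For the independence and the conditional law in part (b), I would argue from the strong Markov property applied at the successive hitting times $t_0=0$, $t_n=\mathring\tau_{\widetilde V}^{(n)}\wedge\tau_A$ of $\widetilde V\cup A$, exactly the times used to build $\widetilde X$. Between $t_{i-1}$ and $t_i$ the chain $X$ runs an excursion $X|_{[t_{i-1},t_i]}$ from $\widetilde X_{i-1}$ to $\widetilde X_i$ that stays in $V\setminus(\widetilde V\cup A)$ in between; given $\widetilde X$, these excursions are mutually independent with $X|_{[t_{i-1},t_i]}$ distributed as $X|_{[0,\mathring\tau_{\widetilde V}\wedge\tau_A]}$ under $\mathbb P_{\widetilde X_{i-1}}$ conditioned on $\widetilde X_1=\widetilde X_i$. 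The key point is then that $\bm v_i$, the $i$-th block of the erased path, is a \emph{deterministic function of the single excursion} $X|_{[t_{i-1},t_i]}$ together with the data $(y_0,\dots,y_{i-1})$ already fixed by the conditioning — it does not depend on the other excursions. This requires showing that the erasure operators, when restricted to producing the block between the $(i-1)$-st and $i$-th visit to $\widetilde V$, act locally: loops about points of $V_1$ inside the $i$-th excursion are erased using only data within that excursion (because any such point, not lying in $V_2\setminus V_1$, gets its loop about it collapsed before the procedure moves past it, and the relevant maximal return index lies inside $[t_{i-1},t_i]$ since outside that window the path is at points of $\widetilde V$ that are already distinct), and similarly for the second pass $\mfL_{V_2}$. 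Once this locality is in hand, independence of the $\bm v_i$ follows from independence of the excursions, and the conditional law formula is immediate from matching the definitions.

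The main obstacle I anticipate is precisely this locality claim, and in particular handling the edge case where $y_{i-1}=y_i$ only as boundary points of adjacent blocks versus the interior of a block — one must be careful that the PLE index procedure, which scans left to right and at a point of $\widetilde V$ jumps to $\max\{n:w_n=\text{current}\}+1$, really does confine that jump within a single excursion window. Because each surviving point of $\widetilde V$ is visited exactly once in the final path, but may be visited many times by $X$ itself before any erasure, I would need to check that the "last return" used in the PLE definition, when the scan is at a $\widetilde V$-point that will survive, coincides with the last return \emph{within the current excursion}; this follows from the fact that a point of $\widetilde V$ which is revisited in a later excursion would have been erased in an earlier pass, contradicting its survival. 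A clean way to organize this is to first prove the statement for $V_1=\widetilde V=V_2$ (ordinary LE, where the locality of loop erasure across excursions is classical), then for a general pair $V_1\subset V_2$ observe that the second pass $\mfL_{V_2}$ operates on a path already broken into blocks by the first pass and only contracts loops about $V_2\setminus V_1$, which by the same excursion-locality argument stays within blocks. I would also remark, as the statement's preamble already warns, that $\tau_A$ must be read relative to whichever process ($X$ or $\widetilde X$) is under discussion, and that points of $\widetilde V\setminus V_A$ play no role, exactly as in the remark after Lemma~\ref{lemma27}.
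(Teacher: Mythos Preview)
Your overall strategy --- decompose $X|_{[0,\tau_A]}$ into excursions at the times $t_n=\mathring\tau_{\widetilde V}^{(n)}\wedge\tau_A$ and invoke the strong Markov property --- is the right one, and your argument for (a) is fine. But for (b) the identification of $\bm v_i$ with the $i$-th excursion $X|_{[t_{i-1},t_i]}$ is wrong, and with it the ``locality'' claim. Take $V_1=V_2=\widetilde V=\{a,b\}$, $A=\{c\}$, and a sample path $X=(a,a',b,b',a,a'',c)$ with $a',b',a''\notin\widetilde V\cup A$. Then $\widetilde X|_{[0,\tau_A]}=(a,b,a,c)$, its loop erasure is $(y_0,y_1)=(a,c)$, and the single block $\bm v_1=(a,a'',c)$ is the \emph{third} excursion $X|_{[t_2,t_3]}$, not the first. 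Your justification --- ``a point of $\widetilde V$ which is revisited in a later excursion would have been erased in an earlier pass, contradicting its survival'' --- fails here: $a$ is revisited at $t_2$ and yet survives as $y_0$, and the ``last return'' the PLE uses at $a$ is precisely this later revisit. The same counterexample applies to the special case $V_1=V_2$ you call classical, so locality of the erasure across excursions is simply false.

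The fix, which is what the paper does, is to condition not on the PLE $(y_0,\dots,y_\iota)$ but on the full trace $\widetilde X|_{[0,\tau_A]}=\bm w$. From $\bm w$ one computes the composed PLE index set $(n_0,\dots,n_\iota)$, and then checks directly that $\bm v_i$ equals the $n_i$-th excursion $X|_{[t_{n_i-1},t_{n_i}]}$ (with a minor adjustment when $i=1$ and $n_1=1$). Strong Markov makes all excursions conditionally independent given $\widetilde X$, hence also the selected ones $\bm v_1,\dots,\bm v_\iota$, and gives each $\bm v_i$ the law $\mathbb P_{y_{i-1}}(X|_{[0,\mathring\tau_{\widetilde V}\wedge\tau_A]}\in\cdot\mid\widetilde X_1=y_i)$. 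Since this depends on $\bm w$ only through its PLE $(y_0,\dots,y_\iota)$, one can then coarsen the conditioning. So the missing ingredient is not a locality statement but the correct identification of \emph{which} excursion $\bm v_i$ is --- an index $n_i$ that is a deterministic function of the finer conditioning variable $\widetilde X|_{[0,\tau_A]}$, not simply $i$.
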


\begin{proof}
(a) is obvious. To show (b), we consider the distribution of $\bm{v}_i$ conditioned on $\widetilde{X}|_{[0,\tau_A]}$. We let $\bm{y}=(y_0,y_1,\cdots,y_\iota)$ and $\bm{w}=(w_0,w_1,\cdots,w_\eta)$ such that $\mfL_{V_2}\circ\mfL_{V_1}(\bm{w})=\bm{y}$, and we write 
\[(n_0,n_1,\cdots,n_\iota)=(k_{l_0},k_{l_1},\cdots,k_{l_\iota}),\]
where $\mfI_{V_1}(\bw)=(k_0,k_1,\cdots,k_{\iota'}),\ \mfI_{V_2}\circ\mfL_{V_1}(\bw)=(l_0,l_1,\cdots,l_{\iota})$.

Then, if $\widetilde{X}|_{[0,\tau_A]}=\bm{w}$, for $1\leq i\leq \iota$, we have 
\[\bm{v}_i=
\begin{cases}
	(X_0,X_1,\cdots,X_{\mathring{\tau}_{\widetilde{V}}\wedge \tau_A}),&\text{ if }i=1\text{ and }n_1=1,\\
	(X_{\mathring{\tau}^{(n_i-1)}_{\widetilde{V}}\wedge \tau_A},\cdots,X_{\mathring{\tau}^{(n_i)}_{\widetilde{V}}\wedge \tau_A}),&\text{ otherwise.}
\end{cases}
\]
Whence, by the strong Markov property, $\bm{v}_i,1\leq i\leq \iota$ are mutually independent conditioned on the event $\widetilde{X}|_{[0,\tau_A]}=\bw$, and 
\[
 \mathbb{P}_x\big(\bm{v}_i\in \bullet\big|\widetilde{X}|_{[0,\tau_A]}=\bw\big)
 =\mathbb{P}_{y_{i-1}}\big(X|_{[0,\mathring{\tau}_{\widetilde{V}}\wedge \tau_A]}\in\bullet\big|\widetilde{X}_1=y_i\big),\qquad\forall 1\leq i\leq \iota. 
\]
(b) then follows easily noticing that the conditional law depends only on $\mfL_{V_2}\circ\mfL_{V_1}(\widetilde{X}|_{[0,\tau_A]})$.
\end{proof}

In particular, Lemma \ref{lemma28} implies that the law of $\mfL_{V_2}\circ \mfL_{V_1}(X|_{[0,\tau_A]})$ depends only on the law of $\mfL_{V_2}\circ \mfL_{V_1}(\widetilde{X}|_{[0,\tau_A]})$. Combining with Lemma \ref{lemma27}, we are now able to iterate the special case proved in Proposition \ref{prop26}. 

\begin{lemma}\label{lemma29}
Let $(\Omega,X_n,\mbP_x)$ be a discrete time Markov chain on a finite set $V$.  Let 
\[\emptyset\neq V_1\subsetneq V_2\subsetneq \cdots\subsetneq V_m=V\]
be an expanding sequence such that $\#V_j\setminus V_{j-1}=1,\forall 2\leq j\leq m$. Then for any $A\subset V$ and $x\in V$ such that $\mbP_x(\tau_A<\infty)=1$, we have
\[\mbP_x\big(\mfL(X|_{[0,\tau_A]})\in \bullet\big)=\mbP_x\big(\mfL_{V_m}\circ\cdots\circ\mfL_{V_2}\circ \mfL_{V_1}(X|_{[0,\tau_A]})\in \bullet\big).\]
\end{lemma}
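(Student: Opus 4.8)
The plan is to prove Lemma \ref{lemma29} by induction on $m$; the case $m=1$ is trivial since $\mfL_{V_1}=\mfL_V=\mfL$, so suppose $m\ge 2$ and that the statement holds for $m-1$. First I would carry out the usual reductions: restricting to the event $\Omega_A$ of Lemma \ref{lemma27} one may assume $V=V_A$ (and then, discarding the identity operator $\mfL_\emptyset$ and collapsing repeated sets using idempotency of the PLE operators, one still has a configuration satisfying the hypotheses), and, just as in the proof of Proposition \ref{prop26}, one may assume $x\notin A$ and — the case $x=b$ needing only cosmetic changes — $x\neq b$, where $\{b\}=V_m\setminus V_{m-1}$, so that $x\in V_{m-1}=V\setminus\{b\}$.

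Set $\widetilde V=V_{m-1}$ and let $\widetilde X$ be the Markov chain on $V_A$ induced by $\widetilde V$ as in Lemma \ref{lemma27}; then $\widetilde X|_{[0,\tau_A]}$ has all its non-terminal vertices in $V_{m-1}$ and its endpoint in $A$, so in particular $\mfL_{V_{m-1}}(\widetilde X|_{[0,\tau_A]})=\mfL(\widetilde X|_{[0,\tau_A]})$. Freezing $\widetilde X$ at a single absorbing point $\triangle$ once it reaches $A$ turns it into a Markov chain $\widetilde X'$ on $V_{m-1}\cup\{\triangle\}$ whose path up to $\tau_\triangle$ is $\widetilde X|_{[0,\tau_A]}$ with the endpoint relabelled (this is needed so that the induction hypothesis, whose last set must be the whole state space, can be brought to bear). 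Applying the induction hypothesis to $\widetilde X'$ with the one-point-increment chain $V_1\cup\{\triangle\}\subsetneq\cdots\subsetneq V_{m-1}\cup\{\triangle\}$, and noting that $\triangle$ is visited only at the last step so that each $\mfL_{V_j\cup\{\triangle\}}$ acts as $\mfL_{V_j}$ on these paths, I obtain
\[
\mfL\big(\widetilde X|_{[0,\tau_A]}\big)\ \overset{d}{=}\ \mfL_{V_{m-1}}\circ\cdots\circ\mfL_{V_1}\big(\widetilde X|_{[0,\tau_A]}\big).
\]

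Next I would lift this back to $X$. Here I invoke Lemma \ref{lemma28} with $V_1=V_2=V_{m-1}$ (so $\mfL_{V_2}\circ\mfL_{V_1}=\mfL_{V_{m-1}}$), and also — since the proof of Lemma \ref{lemma28} uses only that the inner map is a subpath operator and the outer one is $\mfL_{\widetilde V}$ — with the single inner PLE replaced by the composition $\mfL_{V_{m-2}}\circ\cdots\circ\mfL_{V_1}$, all of whose ranges lie in $V_{m-1}$. In both cases the conditional law of $\mfL_{V_{m-1}}\circ(\cdots)(X|_{[0,\tau_A]})$ given $\mfL_{V_{m-1}}\circ(\cdots)(\widetilde X|_{[0,\tau_A]})$ is one and the same Markov kernel: between consecutive skeleton vertices $y_{i-1},y_i$ it inserts mutually independent $V\setminus(V_{m-1}\cup A)$-excursions of law $\mbP_{y_{i-1}}\big(X|_{[0,\mathring\tau_{V_{m-1}}\wedge\tau_A]}\in\bullet\,\big|\,\widetilde X_1=y_i\big)$. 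Since the two conditioning variables $\mfL_{V_{m-1}}(\widetilde X|_{[0,\tau_A]})=\mfL(\widetilde X|_{[0,\tau_A]})$ and $\mfL_{V_{m-1}}\circ\cdots\circ\mfL_{V_1}(\widetilde X|_{[0,\tau_A]})$ have the same law by the previous paragraph, it follows that $\mfL_{V_{m-1}}(X|_{[0,\tau_A]})\overset{d}{=}\mfL_{V_{m-1}}\circ\cdots\circ\mfL_{V_1}(X|_{[0,\tau_A]})$. Applying the deterministic operator $\mfL=\mfL_{V_m}$ and then Proposition \ref{prop26} (which gives $\mfL(X|_{[0,\tau_A]})\overset{d}{=}\mfL_{V_m}\circ\mfL_{V_{m-1}}(X|_{[0,\tau_A]})$) one concludes
\[
\mfL(X|_{[0,\tau_A]})\ \overset{d}{=}\ \mfL\circ\mfL_{V_{m-1}}(X|_{[0,\tau_A]})\ \overset{d}{=}\ \mfL_{V_m}\circ\cdots\circ\mfL_{V_1}(X|_{[0,\tau_A]}),
\]
which is the assertion for $m$.

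The step I expect to be the main obstacle is the extension of Lemma \ref{lemma28} used above: one must check that replacing its single inner PLE by a composition of PLE's with ranges in $\widetilde V$ changes neither the excursion structure of part (a) nor the conditional law of part (b). The key point is that after applying $\mfL_{\widetilde V}$ every consecutive pair of vertices is still a genuine one-step transition of $\widetilde X$ — because each step of a PLE either advances by one or jumps to just after the last occurrence of the current vertex, and in both cases the vertex just before the landing point carries the same label as the vertex from which the jump was made — which is exactly what makes the strong Markov computation behind Lemma \ref{lemma28} go through, and that the resulting kernel depends only on $\widetilde V$ and the transition kernel of $X$, not on the inner operator. The reductions in the first paragraph and the passage through the absorbing state $\triangle$ are routine bookkeeping.
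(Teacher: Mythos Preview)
Your proof is correct, but it takes a genuinely different route from the paper's. The paper peels layers from the \emph{inside}: it sets $\widetilde V=V_2$, applies Proposition~\ref{prop26} on the traced chain to obtain $\mfL_{V_2}\circ\mfL_{V_1}(\widetilde X|_{[0,\tau_A]})\overset d=\mfL_{V_2}(\widetilde X|_{[0,\tau_A]})$, then lifts this via Lemma~\ref{lemma28} (used exactly as stated, with a single inner PLE) to $\mfL_{V_2}\circ\mfL_{V_1}(X|_{[0,\tau_A]})\overset d=\mfL_{V_2}(X|_{[0,\tau_A]})$, applies the deterministic outer operators $\mfL_{V_m}\circ\cdots\circ\mfL_{V_3}$, and repeats the argument with $(V_2,V_3)$ in place of $(V_1,V_2)$. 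You instead induct from the \emph{outside}: trace onto $\widetilde V=V_{m-1}$, invoke the induction hypothesis on the traced chain, lift back, and invoke Proposition~\ref{prop26} once at the end for the pair $(V_{m-1},V_m)$. The price is that your lifting step requires Lemma~\ref{lemma28} with the inner operator replaced by a composition of PLE's; you correctly isolate this as the only nontrivial point and justify it via the ``one-step'' property (each PLE index $n_i$ satisfies $w_{n_i-1}=w_{n_{i-1}}$ in the underlying path, and this property iterates under composition), which is exactly what drives the strong Markov computation. The paper's organization avoids this extension altogether by keeping every invocation of Lemma~\ref{lemma28} strictly within its stated scope; your argument, on the other hand, makes the inductive structure more explicit and distills the mechanism behind Lemma~\ref{lemma28} into a single reusable observation. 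Both approaches are sound.
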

\begin{proof}
Without loss of generality, we assume $A\subset V_j,\forall j=2,3,\cdots,m$.

First, if we let $\widetilde{V}=V_2$ in Lemma \ref{lemma27}. By Proposition \ref{prop26}, we can show that $\mfL_{V_2}\circ\mfL_{V_1}(\widetilde{X}|_{[0,\tau_A]})$ and $\mfL_{V_2}(\widetilde{X}|_{[0,\tau_A]})$ have the same law. Then, if $x\in V_2$, $(\Omega_A,\widetilde{X}_n,\mathbb{P}_y)$ can be viewed as a Markov chain on $V_2\cap V_A$ (restricted on a smaller sample space if necessary), so the claim follows from Proposition \ref{prop26} and Lemma \ref{lemma27}; if $x\notin V_2$, $(\Omega_A,\widetilde{X}_n,\mathbb{P}_y)$ can be viewed as a Markov chain on $(V_2\cup\{x\})\cap V_A$ (restricted on a smaller sample space if necessary), and we get the same conclusion noticing that $\mathbb{P}_x\big(\mfL_{V_2}(\widetilde{X}|_{[0,\tau_A]})\in \bullet\big)=\mathbb{P}_x\big(\mfL_{V_2\cup\{x\}}(\widetilde{X}|_{[0,\tau_A]})\in \bullet\big)$ and  $\mathbb{P}_x\big(\mfL_{V_2}\circ \mfL_{V_1}(\widetilde{X}|_{[0,\tau_A]})\in \bullet\big)=\mathbb{P}_x\big(\mfL_{V_2\cup\{x\}}\circ \mfL_{V_1\cup\{x\}}(\widetilde{X}|_{[0,\tau_A]})\in \bullet\big)$. 

Next, by Lemma \ref{lemma28}, $\mbP_x\big(\mfL_{V_2}\circ \mfL_{V_1}(X|_{[0,\tau_A]})\in \bullet\big)=\mbP_x\big(\mfL_{V_2}(X|_{[0,\tau_A]})\in \bullet\big)$.
As a consequence,
\[
\begin{aligned}
&\mbP_x\big(\mfL_{V_m}\circ\cdots\circ\mfL_{V_3}\circ\mfL_{V_2}\circ \mfL_{V_1}(X|_{[0,\tau_A]})\in \bullet\big)\\
=&\mbP_x\big(\mfL_{V_m}\circ\cdots\circ\mfL_{V_3}\circ \mfL_{V_2}(X|_{[0,\tau_A]})\in \bullet\big)\\
=&\cdots=\mbP_x\big(\mfL_{V_m}(X|_{[0,\tau_A]})\in \bullet\big),
\end{aligned}
\]
where we repeat the same argument in `$\cdots$'.
\end{proof}

The difference between Lemma \ref{lemma29} and Theorem \ref{th1} is that we have the additional assumption $\#V_j\setminus V_{j-1}=1$. The remaining difficulty is to shortening the sequence. 

\begin{proposition}\label{prop210}
Let $(\Omega,X_n,\mbP_x\big)$ be a Markov chain on a finite set $V$. Let $\emptyset\neq V_1\subsetneq V_2=V$. Then for any $A\subset V$ and $x\in V$ such that $\mathbb{P}_x(\tau_A<\infty)=1$, we have 
\[\mbP_x\big(\mfL(X|_{[0,\tau_A]})\in \bullet\big)=\mbP_x\big(\mfL_{V_2}\circ \mfL_{V_1}(X|_{[0,\tau_A]})\in \bullet\big).\]
\end{proposition}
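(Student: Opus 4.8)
The plan is to reduce Proposition \ref{prop210} to the single-point case of Proposition \ref{prop26} by building an intermediate expanding sequence that inserts the points of $V\setminus V_1$ one at a time, and then to transfer the conclusion back to the original two-term sequence. Concretely, enumerate $V\setminus V_1=\{b_1,b_2,\dots,b_k\}$ arbitrarily and set $V_1'=V_1$, $V_j'=V_1\cup\{b_1,\dots,b_{j-1}\}$ for $2\le j\le k+1$, so that $\emptyset\neq V_1'\subsetneq V_2'\subsetneq\cdots\subsetneq V_{k+1}'=V$ with $\#(V_j'\setminus V_{j-1}')=1$. By Lemma \ref{lemma29} applied to this sequence,
\[
\mbP_x\big(\mfL(X|_{[0,\tau_A]})\in\bullet\big)=\mbP_x\big(\mfL_{V_{k+1}'}\circ\cdots\circ\mfL_{V_2'}\circ\mfL_{V_1'}(X|_{[0,\tau_A]})\in\bullet\big).
\]
So it remains to show that this long composition has the same law as $\mfL_{V_2}\circ\mfL_{V_1}(X|_{[0,\tau_A]})=\mfL_{V}\circ\mfL_{V_1}(X|_{[0,\tau_A]})$; since $V_1'=V_1$ and $V_{k+1}'=V$, this is the assertion that inserting the extra intermediate stages $\mfL_{V_2'},\dots,\mfL_{V_k'}$ between $\mfL_{V_1}$ and $\mfL_V$ does not change the distribution.

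The key step is therefore a ``collapsing'' lemma: for the Markov chain run until $\tau_A$, and for any chain $V_1\subset W_1\subset W_2\subset W_3=V$ in which $W_2$ is obtained from $W_1$ by adjoining finitely many points, one has $\mfL_{W_3}\circ\mfL_{W_2}\circ\mfL_{V_1}(X|_{[0,\tau_A]})$ equidistributed with $\mfL_{W_3}\circ\mfL_{V_1}(X|_{[0,\tau_A]})$. The natural way to get this is to iterate the $m=2$, single-point case one point at a time but now with $\mfL_{V_1}$ already applied, using the same trace-of-the-chain machinery as in Lemma \ref{lemma29}. That is: first apply Lemma \ref{lemma28} (with the roles $V_1\leadsto V_1$, $V_2\leadsto W_1$, $\widetilde V\leadsto W_1$) to see that $\mfL_{W_1}\circ\mfL_{V_1}(X|_{[0,\tau_A]})$ depends in law only on $\mfL_{W_1}\circ\mfL_{V_1}(\widetilde X|_{[0,\tau_A]})$, where $\widetilde X$ is the trace of $X$ on $W_1$ (here $V_1\subset W_1$ so the trace is taken on the larger set $W_1$); by Proposition \ref{prop26} applied to $\widetilde X$ on $W_1$ with the split $V_1\subsetneq W_1$, this equals in law $\mfL_{W_1}(\widetilde X|_{[0,\tau_A]})$, hence $\mfL_{W_1}\circ\mfL_{V_1}(X|_{[0,\tau_A]})\overset d=\mfL_{W_1}(X|_{[0,\tau_A]})$. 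Then one repeats with $W_1\subsetneq W_1\cup\{b\}$, etc., exactly as in the proof of Lemma \ref{lemma29}, to peel off each intermediate stage. Applying this with $W_1$ ranging over $V_2',\dots,V_k'$ collapses the long composition down to $\mfL_{V}\circ\mfL_{V_1}(X|_{[0,\tau_A]})$, which is $\mfL_{V_2}\circ\mfL_{V_1}(X|_{[0,\tau_A]})$, completing the proof.

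An alternative, perhaps cleaner, route avoids re-deriving the collapsing lemma: observe that Lemma \ref{lemma29} (together with the $x\notin V_2$ bookkeeping used in its proof) already gives, for \emph{any} single intermediate insertion, that $\mfL_{V}\circ\mfL_{W}\circ\mfL_{V_1}(X|_{[0,\tau_A]})\overset d=\mfL_V\circ\mfL_{V_1}(X|_{[0,\tau_A]})$ whenever $V_1\subset W\subset V$ and $\#(W\setminus V_1)\le$ the relevant count—because the proof of Lemma \ref{lemma29} is precisely an induction that removes intermediate $\mfL$'s one point at a time, and that induction never used that the \emph{first} term was $V_1$ specifically. Formally, it is cleanest to state and prove the collapsing lemma as above and then deduce Proposition \ref{prop210} in two lines.

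The main obstacle I anticipate is bookkeeping rather than a genuine difficulty: one must be careful that when $V_1\subset W_1$ the trace chain $\widetilde X$ lives on $W_1$ (not on a two-point or single-point set), that Proposition \ref{prop26} is being applied to $\widetilde X$ with the correct split $V_1\subsetneq W_1$ and correct target set $A$ (with $A\subset W_1$, arranged by shrinking as in Lemma \ref{lemma29}), and that the starting point $x$ may need to be formally adjoined to each $W_i$ exactly as handled in the proof of Lemma \ref{lemma29} via the identities $\mbP_x(\mfL_{W}(\widetilde X|_{[0,\tau_A]})\in\bullet)=\mbP_x(\mfL_{W\cup\{x\}}(\widetilde X|_{[0,\tau_A]})\in\bullet)$ and its two-term analogue. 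Once these are tracked, the statement follows by a finite induction on $\#(V\setminus V_1)$ with Proposition \ref{prop26} as the base step.
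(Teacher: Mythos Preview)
There is a genuine gap. Your plan hinges on a ``collapsing'' step that removes the intermediate operators $\mfL_{V_2'},\dots,\mfL_{V_k'}$ from the composition while keeping $\mfL_{V_1}$ at the bottom, but the argument you sketch does the opposite. The step you describe --- pass to the trace $\widetilde X$ on $W_1$, apply Proposition~\ref{prop26} to get $\mfL_{W_1}\circ\mfL_{V_1}(\widetilde X)\overset{d}{=}\mfL_{W_1}(\widetilde X)$, then lift via Lemma~\ref{lemma28} --- yields $\mfL_{W_1}\circ\mfL_{V_1}(X)\overset{d}{=}\mfL_{W_1}(X)$, i.e.\ it removes $\mfL_{V_1}$, not $\mfL_{W_1}$. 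Iterating this is exactly the proof of Lemma~\ref{lemma29} and collapses the long composition to $\mfL_V(X)$, not to $\mfL_V\circ\mfL_{V_1}(X)$. Your ``alternative route'' has the same circularity: to apply Lemma~\ref{lemma29} to the two-term chain $V_1\subsetneq V$ you would already need $\#(V\setminus V_1)=1$.

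The missing identity $\mfL_V\circ\mfL_W\circ\mfL_{V_1}(X)\overset{d}{=}\mfL_V\circ\mfL_{V_1}(X)$ cannot be obtained by the trace/Proposition~\ref{prop26} machinery alone: Example~\ref{example22} shows $\mfL_V\circ\mfL_W\neq\mfL_V$ as operators, so one cannot cancel $\mfL_W$ pathwise; and $\mfL_{V_1}(X|_{[0,\tau_A]})$ is not a Markov chain, so one cannot invoke the distributional result for it. The paper's proof sidesteps this by a different idea: it does \emph{not} use a single enumeration of $V\setminus V_1$. Instead, for each self-avoiding path $\bm v$ on $V\setminus V_1$ it chooses a tailored one-point-at-a-time sequence $V_{j,\bm v}$ that adds the points of $\bm v$ in the order they appear in $\bm v$, and then partitions the sample space according to the $V\setminus V_1$--trace of $\mfL\circ\mfL_{V_1}(X|_{[0,\tau_A]})$. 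Two pathwise observations show that, on each piece $\Omega_{\bm v}$ of the partition, $\mfL\circ\mfL_{V_1}$ agrees with the long composition $\mfL_{V_{m,\bm v}}\circ\cdots\circ\mfL_{V_{1,\bm v}}$, and that off $\Omega_{\bm v}$ the latter cannot produce a path with $V\setminus V_1$--trace $\bm v$. This lets one match probabilities path by path and then invoke Lemma~\ref{lemma29}. The dependence of the auxiliary sequence on $\bm v$ is precisely what your fixed enumeration lacks.
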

\begin{proof}
For convenience, we let $m=\#(V\setminus V_1)+1$. For each self-avoiding path $\bm{v}=(v_0,v_1,\cdots,v_k)$ on $V\setminus V_1$, we fix a sequence
\[V_1=V_{1,\bm{v}}\subsetneq V_{2,\bm{v}}\subsetneq \cdots\subsetneq V_{m,\bm{v}}=V\]
such that $V_{j+2,\bm{v}}\setminus V_{j+1,\bm{v}}=\{v_j\}$ for any $0\leq j\leq k$. 

We use $\Xi$ to denote the set of finite paths on $V$ generated by $\mfL_{V_1}$, i.e. $\Xi=\{\bw:\bw=\mfL_{V_1}(\bw'),\ \bw'\text{ is a finite path on }V\}$. For each self-avoiding path $\bw$, we write $\bw^{[V\setminus V_1]}$ for the subpath of $\bw$ consisted of all the elements that $\bw$ hits in $V\setminus V_1$ in chronological order. For each self-avoiding path $\bm{v}$ on $V\setminus V_1$, we define
\[\Xi_{\bm{v}}=\{\bw\in \Xi: (\mfL\bw)^{[V\setminus V_1]}=\bm{v}\},\]
and 
\[\Omega_{\bm{v}}=\{\omega\in \Omega: \mfL_{V_1}\big(X(\omega)|_{[0,\tau_A]}\big)\in \Xi_{\bm{v}}\big\}.\]
We have the following two observations. \vspace{0.15cm}

\noindent\textit{Observation 1. For each self-avoiding path $\bm{v}$ on $V\setminus V_1$ and $\bw\in \Xi_{\bm{v}}$, we have}
\[\mfL(\bw)=\mfL_{V_{m,\bm{v}}}\circ\cdots\circ\mfL_{V_{3,\bm{v}}}\circ\mfL_{V_{2,\bm{v}}}(\bw).\]

\noindent\textit{Observation 2. For each $\bw\in \Xi\setminus \Xi_{\bm{v}}$, we have}
\[\big(\mfL_{V_{m,\bm{v}}}\circ\cdots\circ\mfL_{V_{3,\bm{v}}}\circ\mfL_{V_{2,\bm{v}}}(\bw)\big)^{[V\setminus V_1]}\neq \bm{v}.\] 

Observation 1 is clear. We explain Observation 2 here. For $\bw\in \Xi\setminus \Xi_{\bm{v}}$, we write $\bm{v}'=(v'_0,\cdots,v'_{k})=\mfL(\bw)^{[V\setminus V_1]}$. There are two possible cases. In the first case, when there is $0\leq j\leq k$ such that $\{v'_j\}\neq V_{j+2,\bm{v}}\setminus V_{j+1,\bm{v}}$ and $\{v'_i\}= V_{i+2,\bm{v}}\setminus V_{i+1,\bm{v}},\forall 0\leq i<j$, one can easily check that $\big(\mfL_{V_{m,\bm{v}}}\circ\cdots\circ\mfL_{V_{3,\bm{v}}}\circ\mfL_{V_{2,\bm{v}}}(\bw)\big)^{[V\setminus V_1]}$ takes the form $(v'_0,v'_1,\cdots,v'_j,\cdots)$, hence the observation holds. In the second case, if no such $j$ appears, one simply have $\mfL_{V_{m,\bm{v}}}\circ\cdots\circ\mfL_{V_{3,\bm{v}}}\circ\mfL_{V_{2,\bm{v}}}(\bw)=\mfL(\bw)$, so the observation also holds. \vspace{0.15cm}

Finally, the theorem follows from the following sequence of equalities, which holds for any self-avoiding path $\bw$ and $x\in V$,
\begin{align*}
&\mathbb{P}_x\big(\mfL\circ\mfL_{V_1}(X|_{[0,\tau_A]})=\bw\big)\\
=&\mathbb{P}_x\big(\Omega_{\bm{v}},\ \mfL\circ\mfL_{V_1}(X|_{[0,\tau_A]})=\bw\big)\\
=&\mathbb{P}_x\big(\Omega_{\bm{v}},\ \mfL_{V_{m,\bm{v}}}\circ\cdots\circ\mfL_{V_{2,\bm{v}}}\circ\mfL_{V_{1,\bm{v}}}(X|_{[0,\tau_A]})=\bm{w}\big)\\
=&\mathbb{P}_x\big(\mfL_{V_{m,\bm{v}}}\circ\cdots\circ\mfL_{V_{2,\bm{v}}}\circ\mfL_{V_{1,\bm{v}}}(X|_{[0,\tau_A]})=\bm{w}\big)\\
=&\mathbb{P}_x\big(\mfL(X|_{[0,\tau_A]})=\bm{w}\big),
\end{align*}
where $\bm{v}=\bw^{[V\setminus V_1]}$. We briefly explain the equality for the convenience of readers: the first equality holds by the definition of $\Omega_{\bm{v}}$; the second equality holds due to Observation 1; the third equality holds due to Observation 2; the last equality holds due to Lemma \ref{lemma29}.
\end{proof}

\begin{proof}[Proof of Theorem \ref{th1}]
	Theorem \ref{th1} follows from Proposition \ref{prop210} by a same argument as the proof of Lemma \ref{lemma29}.
\end{proof}

\subsection{The Sierpi\'nski gasket graphs}
Finally, as an example, we briefly explain how to apply Theorem \ref{th1} to Sierpi\'nski gasket graphs. The algorithm  `erasing-larger-loops-first' introduced in \cite{H,HM} is more complicated to describe, so we will not explain it here. Readers can also find related study of self-avoiding walks on Sierpi\'nski gaskets in \cite{HHH,HH,HHK}. 

Let $q_1=(0,0)$, $q_2=(1,0)$ and $q_3=(\frac{1}{2},\frac{\sqrt{3}}{2})$ be the three vertices of an equilateral triangle in $\mathbb{R}^2$, and define the iterated function system (i.f.s.) $\{F_i\}_{i=1}^3$ as $F_i(x)=\frac{1}{2}x+\frac{1}{2}q_i$ for $i=1,2,3$. Then the Sierpi\'nski gasket $\mathcal{SG}$ is the unique compact subset of $\mathbb{R}^2$ such that 
$\mathcal{SG}=F_1(\mathcal{SG})\cup F_2(\mathcal{SG})\cup F_3(\mathcal{SG})$. See Figure \ref{SG} for a picture of $\mathcal{SG}$. 

\begin{figure}[htp]
	\includegraphics[width=3.5cm]{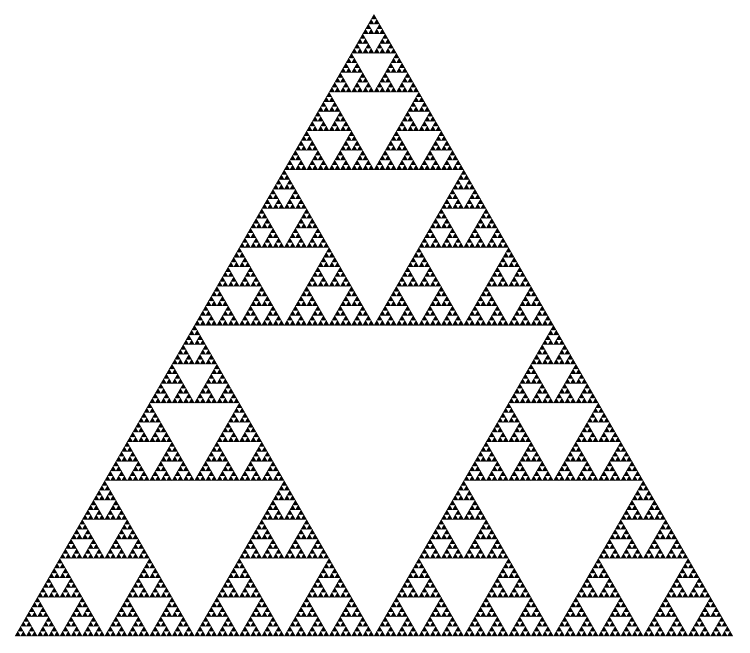}
	\caption{The Sierpi\'nski gasket.}\label{SG}
\end{figure}

$\mathcal{SG}$ can naturally be approximated by a sequence of graphs. Let $V_0=\{q_1,q_2,q_3\}$, and iteratively for $m\geq 1$, we let $V_m=\bigcup_{i=1}^3F_i(V_{m-1})$. The associated (undirected) edge sets are defined as $E_m=\big\{\{p,q\}:\text{there exists }\theta=\theta_1\theta_2\cdots\theta_m\in \{1,2,3\}^m\text{ such that }p,q\in F_{\theta_1}F_{\theta_2}\cdots F_{\theta_m}(V_0)\big\}$. We then call $G_m=(V_m,E_m)$ the level-$m$ Sierpi\'nski gasket graph. See Figure \ref{SGgraphs} for the level-$1,2,3$ Sierpi\'nski gasket graphs.

\begin{figure}[htp]
	\includegraphics[width=3.5cm]{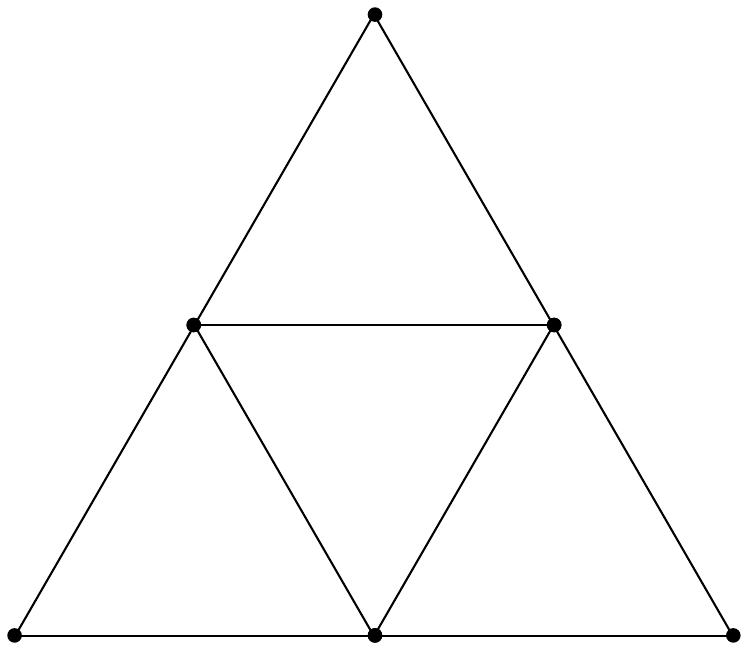}\ 
	\includegraphics[width=3.5cm]{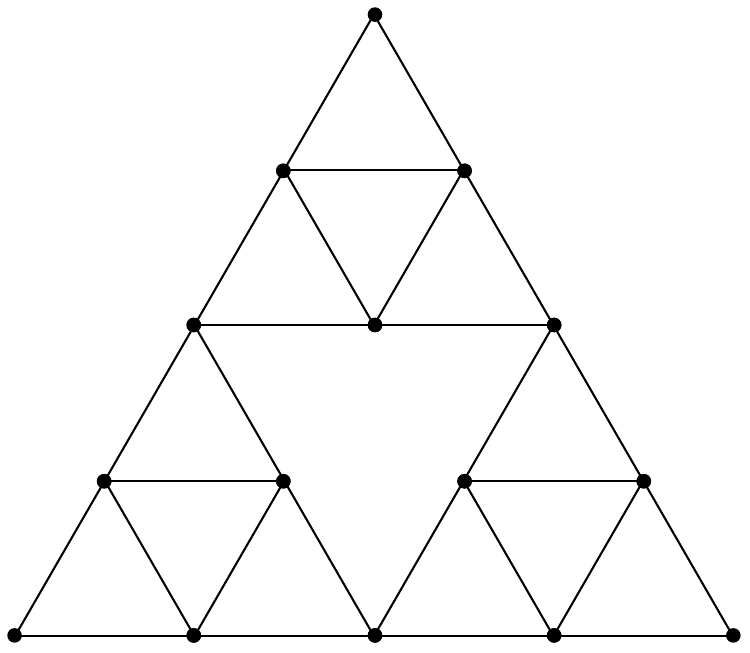}\ 
	\includegraphics[width=3.5cm]{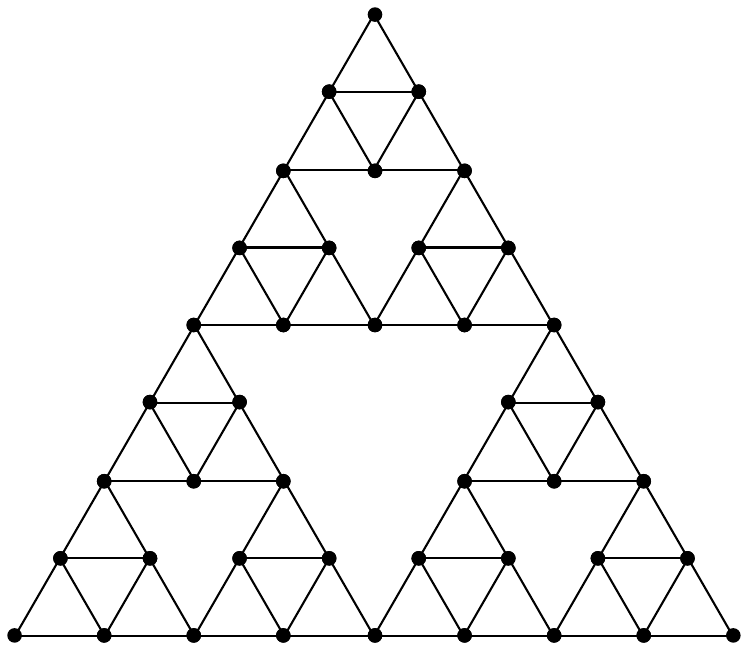}
	\caption{The Sierpi\'nski gasket graphs.}\label{SGgraphs}
\end{figure}

We now consider the simple random walk $(\Omega^{(m)},X^{(m)}_n,\mathbb{P}_x^{(m)})$ on $G_m$, i.e. each step $X_{n+1}^{(m)}$ will be on one of the neighbours of $X^{(m)}_n$ with equal probability. We can apply Theorem \ref{th1} to study the loop-erased walk on $G_m$: 
\[
\mbP_{q_1}\big(\mfL(X^{(m)}|_{[0,\tau_{\{q_2,q_3\}}]})\in \bullet\big)=\mbP_{q_1}\big(\mfL_{V_m}\circ\cdots\circ\mfL_{V_2}\circ \mfL_{V_1}(X^{(m)}|_{[0,\tau_{\{q_2,q_3\}}]})\in \bullet\big),
\]
where $V_1,V_2,\cdots,V_m$ are the vertice sets of Sierpi\'nski gasket graphs. 

Finally, we observe that the trace of $\mfL_{V_{m-1}}\circ\cdots\circ\mfL_{V_2}\circ \mfL_{V_1}(X^{(m)}|_{[0,\tau_{\{q_2,q_3\}}]})$ on $V_{m-1}$ is the LERW on $V_{m-1}$. By the strong Markov property as in Lemma \ref{lemma28}, and thanks to the good geometry of Sierpi\'nski gaskets (in particular, one can easily check that $\mfL_{V_j}\circ\mfL_{V_{j-1}}\circ\cdots\circ \mfL_{V_{1}}(X^{(m)}|_{[0,\tau_{\{q_2,q_3\}}]})$ will hit the same $j-1$ level cells as $\mfL_{V_{j-1}}\circ\cdots\circ \mfL_{V_{1}}(X^{(m)}|_{[0,\tau_{\{q_2,q_3\}}]})$), the problem can be reduced to a study of a multi-branching process. See \cite{H,HM} for detailed calculations, where the exact growth exponent was shown to be $\frac{\log 2}{\log (20+\sqrt{205})-\log15}$ (firsted computed in \cite{STW}).

\section{A review of resistance spaces}
In history, the diffusion process on the Sierpi\'nski gasket was first constructed as the weak limit of reversible random walks on graphs approximating the fractal \cite{BP,G,kus}, and the idea also achieved success on nested fractals \cite{Lindstrom}. Later, J. Kigami introduced the analytical method of constructing Dirichlet form as the limit of energies on approximating graphs, and with the new method, he extended the result to a larger class named post critically finite (p.c.f.) fractals \cite{ki1,ki2}. The structures therein, which were named resistance forms and resistance metrics later \cite{ki3,ki4,ki5}, has now been a fundamental concept in the area of analysis on fractals. 

In this section, we briefly review the definition of resistance spaces. We also introduce some notations and basic tools for the development in Section 4. Readers can also read books \cite{B,ki3,St}  about fractals and resistance forms. 

\subsection{Resistance spaces} The resistance spaces are natural generalizations of the  electrical networks, so it's natural to start our discussions with the discrete cases.\vspace{0.15cm}

\noindent\textbf{\emph{Reversible random walk}}. A Markov kernel $P$ on a countable set $V$ is called reversible if there exists a weight $\{c_x\}_{x\in V}\in (0,\infty)^{V}$ (unique up to a constant multiplier) such that 
\[c_xP(x,y)=c_yP(y,x),\qquad \forall x,y\in V.\] 
The associated Markov chain $\big(\Omega,X_n,\mbP_x\big)$ is called a reversible random walk on $V$. \textbf{In this paper, we also require $P(x,x)=0,\forall x\in V$ all the time.}\vspace{0.15cm}

\noindent\textbf{\emph{(Irreducible) Electrical network}}. Let $P$ be a reversible Markov kernel, and we assume $\big(\Omega,X_n,\mbP_x\big)$ is irreducible. In particular, we need to highlight that irreducibility is a requirement in the definition of the resistance form. We define 
\[c_{x,y}=c_xP(x,y)=c_yP(y,x),\qquad\forall x\neq y.\]
We interpret $c_{x,y}=c_{y,x}$ as the conductance between $x,y$. One can view this structure as an electrical network. 

There is an energy form associated with the electrical network
\[\mcD(f,f)=\frac{1}{2}\sum_{x\in V}\sum_{y\in V\setminus \{x\}}c_{x,y}\big(f(x)-f(y)\big)^2,\quad\forall f\in l(V).\]
Let $\mcF=\{f\in l(V):\mcD(f,f)<\infty\}$. Then, $\mcD$ induces a bilinear form on $\mcF$: $\mcD(f,g)=\frac{1}{4}\big(\mcD(f+g)-\mcD(f-g)\big),\forall f,g\in \mcF$, and we call $(\mcD,\mcF)$ a \emph{(discrete) resistance form}. Moreover, $L^2(V,c):=\{f\in l(V):\sum_{x\in V}f^2(x)c_x<\infty\}\subset\mcF$, and $\big(\mcD, L^2(V,c)\big)$ is a Dirichlet form on $L^2(V,c)$. 

Let $x\neq y\in V$, we define the effective resistance $R$ between $x,y$ as
\[R(x,y)=\big(\inf\{\mcD(f,f):f\in l(V),f(x)=0,f(y)=1\}\big)^{-1}.\]
In addition, we set $R(x,x)=0,\forall x\in V$. \vspace{0.15cm}

Kigami \cite{ki3} showed that the effective resistance $R$ is a metric on $V$. For this reason, we also call $R$ the resistance metric. 

\begin{proposition}[\cite{ki3}]\label{prop31}
	The effective resistance $R$ associated with the resistance form $(\mcD,\mcF)$ is a metric on $V$. In addition, let $(\mcD',\mcF')$ be a different resistance form on $V$, and let $R'$ be the associated effective resistance, then $R\neq R'$.  
\end{proposition}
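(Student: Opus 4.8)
The plan is to establish the two assertions separately, since they rely on different ideas. For the first assertion (that $R$ is a metric), I would check the three metric axioms. Positivity and symmetry are almost immediate: $R(x,y)\geq 0$ by definition, $R(x,x)=0$ by convention, $R(x,y)=R(y,x)$ by swapping the roles of the boundary values $0$ and $1$ in the variational problem and using the translation/reflection invariance $\mcD(f,f)=\mcD(1-f,1-f)$, and $R(x,y)>0$ for $x\neq y$ because $\mcD(f,f)<\infty$ forces $f$ to be finite, so the infimum defining $R(x,y)^{-1}$ is over a nonempty set bounded above (any $f$ supported suitably gives a finite value) -- the delicate point being that the infimum is actually attained and is finite; this uses irreducibility, which guarantees a finite-energy function separating $x$ and $y$ exists via a path of edges. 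The triangle inequality $R(x,z)\leq R(x,y)+R(y,z)$ is the substantive step: I would use the standard characterization of effective resistance as
\[
R(x,y)=\sup\Big\{\frac{\big(f(x)-f(y)\big)^2}{\mcD(f,f)}: f\in\mcF,\ \mcD(f,f)>0\Big\},
\]
which exhibits $R(x,y)^{1/2}$ as a seminorm-type quantity: fixing any $f$ and applying $|f(x)-f(z)|\leq |f(x)-f(y)|+|f(y)-f(z)|$ together with the $\sup$-definition gives $|f(x)-f(z)|\leq \big(R(x,y)^{1/2}+R(y,z)^{1/2}\big)\mcD(f,f)^{1/2}$, hence $R(x,z)^{1/2}\leq R(x,y)^{1/2}+R(y,z)^{1/2}$, which is even stronger than the triangle inequality. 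So $R$ is a metric (indeed $R^{1/2}$ is too).

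For the second assertion (distinct resistance forms give distinct resistance metrics), I would argue by contrapositive: suppose $R=R'$ and show $(\mcD,\mcF)=(\mcD',\mcF')$. The key is that on a finite set the resistance metric determines all the pairwise conductances: from the network one recovers, for each pair $x\neq y$, the resistance $R(x,y)$, and by the classical theory of electrical networks (star-triangle / Laplacian reconstruction on finitely many points) the full conductance matrix of the trace network on any finite $V_0\subset V$ is determined by the restriction of $R$ to $V_0$. Concretely, the trace of $(\mcD,\mcF)$ on a finite set $V_0$ is a finite electrical network whose effective resistances coincide with $R|_{V_0\times V_0}$, and a finite network is uniquely determined by its effective-resistance function; hence the traces of $(\mcD,\mcF)$ and $(\mcD',\mcF')$ on every finite $V_0$ agree. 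Since a resistance form is the monotone limit (in the sense of $\sup$ over finite $V_0$) of its finite traces -- $\mcF=\{f: \sup_{V_0}\mcD_{V_0}(f|_{V_0},f|_{V_0})<\infty\}$ and $\mcD(f,f)=\sup_{V_0}\mcD_{V_0}(f|_{V_0},f|_{V_0})$ -- agreement of all finite traces forces $\mcF=\mcF'$ and $\mcD=\mcD'$, a contradiction.

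The main obstacle I anticipate is the second assertion, specifically the clean statement and use of ``a finite electrical network is determined by its effective resistances.'' One must be careful that this requires the network to be connected (irreducible), which is exactly why irreducibility was flagged as part of the definition of a resistance form; the reconstruction then goes through the graph Laplacian, whose Moore--Penrose pseudoinverse is an explicit linear function of the matrix $\big(R(x,y)\big)$ (the standard formula expressing $R$ in terms of $L^{+}$ can be inverted). I would either cite Kigami's monograph \cite{ki3} for this trace/limit machinery, or include the short linear-algebra lemma for the finite case and then invoke the standard fact (again from \cite{ki3}) that a resistance form equals the inductive limit of its finite-dimensional traces. The first assertion, by contrast, should be routine once the $\sup$-formula for $R$ is in hand; the only care needed there is justifying that the infimum defining $R(x,y)$ is positive and attained, which follows from irreducibility plus a compactness/lower-semicontinuity argument in the finite-dimensional trace.
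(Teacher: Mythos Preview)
The paper does not prove this proposition at all; it is simply quoted from Kigami \cite{ki3}, so there is no ``paper's own proof'' to compare against. That said, your sketch contains a genuine error in the first assertion.

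Your derivation of $\sqrt{R(x,z)}\leq \sqrt{R(x,y)}+\sqrt{R(y,z)}$ from the $\sup$-characterization is correct, but your claim that this is ``even stronger than the triangle inequality'' for $R$ is false. The implication goes the other way: if $R$ is a metric then so is $\sqrt{R}$, but not conversely. For a concrete obstruction, take $\sqrt{R(x,y)}=\sqrt{R(y,z)}=1$ and $\sqrt{R(x,z)}=2$; the square-root triangle inequality holds with equality, yet $R(x,z)=4>2=R(x,y)+R(y,z)$. So your argument proves $\sqrt{R}$ is a metric but leaves the triangle inequality for $R$ itself unproved. The standard route (and the one in \cite{ki3}) is to trace the form onto the three-point set $\{x,y,z\}$, where the effective resistances are preserved, and then verify the inequality directly for a three-vertex network; alternatively one can invoke the commute-time identity $R(x,y)=C(x,y)/(2m)$ on finite subsets and use that commute times obviously satisfy the triangle inequality.

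Your plan for the second assertion is sound and is essentially how Kigami proceeds: reduce to finite traces, use that a finite irreducible network is determined by its effective-resistance matrix (via the Laplacian pseudoinverse), and then recover the full form as the increasing limit of its finite traces. No issues there, though you should state clearly that irreducibility is needed for the finite reconstruction step.
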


\noindent\textbf{Remark.} In some contents (see for example Chapter 9 of \cite{LP}), the wired effective resistances and the free effective resistances are defined on an infinite electrical network. Our effective resistances are the free effective resistances. This will not affect our further discussions. \vspace{0.15cm}

Now, we consider the general settings.

\noindent\emph{\textbf{Resistance metric}}. Let $K$ be a set. $R\in l(K\times K)$ is called a resistance metric on $K$ if for any finite subset $V$ of $K$, there is a resistance form $\mcD_V$ on $V$ so that $R|_{V\times V}$ is the effective resistance associated with $\mcD_V$, i.e.
\[R(x,y)=\big(\inf\{\mcD_V(f,f):f\in l(V),f(x)=0,f(y)=1\}\big)^{-1},\ \forall x\neq y\in V.\]
We call $(K,R)$ a \textbf{\emph{resistance space}} if $R$ is a resistance metric on $K$. \vspace{0.15cm}

\noindent\textbf{\emph{Resistance form}}. Let $K$ be a set, and $l(K)$ be the space of real-valued functions on $K$. A pair $(\mathcal{E},\mathcal F)$ is called a resistance form on $K$ if it satisfies the following  conditions:

\noindent (RF1). \textit{$\mathcal{F}$ is a linear subspace of $l(K)$ containing constants and $\mathcal{E}$ is a nonnegative symmetric bilinear form on $\mathcal F$; $\mathcal{E}(f,f)=0$ if and only if $f$ is constant on $K$.}

\noindent (RF2). \textit{Let $\sim$ be an equivalent relation on $\mathcal{F}$ defined by $f\sim g$ if and only if $f-g$ is constant on $K$. Then $(\mathcal{F}/\sim, \mathcal{E})$ is a Hilbert space.}

\noindent (RF3). \textit{For any finite subset $V\subset K$ and for any $g\in l(V)$, there exists $f\in \mathcal{F}$ such that $f|_V=g$.}

\noindent (RF4). \textit{For any $x,y\in K$, $R(x,y):=\big(\inf\{\mcE(f,f):f\in \mcF,f(x)=0,f(y)=1\}\big)^{-1}$ is finite.}

\noindent (RF5). (Markov property) \textit{If $u\in \mathcal{F}$, then $\bar{f}={ \min\{\max\{f,0\}, 1\}}\in \mathcal{F}$ and $\mathcal{E}(\bar{f},\bar{f})\leq \mathcal{E}(f,f)$.}\vspace{0.15cm}

It has been shown in \cite{ki3} that any separable resistance space admits a natural resistance form on it.

\begin{theorem}[\cite{ki3}]\label{thm32}
	Let $(K,R)$ be a complete separable resistance space. There is a unique resistance form $(\mcE,\mcF)$ associated with $(K,R)$ such that 
	\[R(x,y)=\big(\inf\{\mcE(f,f):f\in \mcF,f(x)=0,f(y)=1\}\big)^{-1},\qquad\forall x\neq y.\]
	In addition, we have the following properties. 
	
	(a). Let $V$ be a finite subset of $K$, and let $\mcD_V$ be the resistance form on $V$ associated with $R$, then  
	\[\mcD_V(f,f)=\min\{\mcE(g,g):g\in \mcF, g|_V=f\},\qquad \forall f\in l(V),\]
	where the minimal on the right side is achieved by a unique function, denoted by $h_f$ for convenience. We call $h_f$ the \textbf{harmonic extension} of $f$. 
	
	(b). If $(K,R)$ is locally compact and $\mu$ is a Radon measure on $K$ with full support, then $(\mcE,\mcF\cap L^2(K,\mu))$ is a Dirichlet form on $L^2(K,\mu)$, where $L^2(K,\mu)$ is the space of $L^2$ integrable functions with respect to $\mu$. 
\end{theorem}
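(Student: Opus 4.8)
The plan is to run the standard compatible-sequence (projective limit) construction of a resistance form out of a resistance metric, in the spirit of \cite{ki3}, and then read off (a) and (b). The first step is to establish \emph{compatibility} of the discrete data. The definition of a resistance space gives, for each finite $V\subset K$, a resistance form $\mcD_V$ on $V$ whose effective resistance is $R|_{V\times V}$. If $V\subset V'$ are finite, the trace of $\mcD_{V'}$ onto $V$ --- minimizing $\mcD_{V'}(g,g)$ over $g$ with $g|_V=f$, a standard electrical-network reduction --- is again a resistance form on $V$, and its effective resistance is still $R|_{V\times V}$, since enlarging the class of competing functions does not change the two-point extremal problem. Proposition \ref{prop31} then forces this trace to equal $\mcD_V$. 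Two consequences will be used repeatedly: monotonicity $\mcD_V(f|_V,f|_V)\le \mcD_{V'}(f|_{V'},f|_{V'})$ for $f\in l(V')$, and the resistance estimate $|f(x)-f(y)|^2\le R(x,y)\,\mcD_V(f,f)$ for $x,y\in V$.

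Next, fix an increasing sequence of finite sets $V_1\subset V_2\subset\cdots$ with $V_*=\bigcup_m V_m$ dense in $K$ (possible since $K$ is separable). For $f\in l(V_*)$ put $\mcE_0(f,f)=\lim_m \mcD_{V_m}(f|_{V_m},f|_{V_m})$, which exists in $[0,\infty]$ by monotonicity, and $\mcF_0=\{f\in l(V_*):\mcE_0(f,f)<\infty\}$. By the resistance estimate each $f\in\mcF_0$ is $R$-uniformly continuous on $V_*$, hence extends uniquely and continuously to $K$ because $(K,R)$ is complete; let $\mcF$ be the set of these extensions, $\mcE$ the transported form, polarized to a bilinear form. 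Then I would verify (RF1)--(RF5): (RF1), (RF4) and (RF5) pass to the limit from the $\mcD_{V_m}$ together with the resistance estimate; (RF3) follows by extending $g\in l(V)$ first to $V_*$ with bounded energy, via the $\mcD_{V_m}$-harmonic extensions and compatibility, and then to $K$; and (RF2), completeness of $(\mcF/\!\sim,\mcE)$, is the delicate one --- one shows an $\mcE$-Cauchy sequence has an honest pointwise limit back in $\mcF$ with the correct energy, the resistance estimate controlling oscillations and pinning the additive constant. Independence of the choice of $\{V_m\}$ comes from interleaving two such sequences.

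Property (a) is then essentially the compatibility statement: for finite $V\subset K$ (choosing the approximating sequence so that $V\subset V_1$, which is legitimate by the independence just noted), the trace of $\mcE$ onto $V$ is the limit of the traces of the $\mcD_{V_m}$, i.e. $\mcD_V$, so $\mcD_V(f,f)=\min\{\mcE(g,g):g\in\mcF,\ g|_V=f\}$; the minimizer is unique by strict convexity of $g\mapsto\mcE(g,g)$ on the closed affine set $\{g\in\mcF:g|_V=f\}$ inside the Hilbert space $\mcF/\!\sim$, and we call it $h_f$. Taking $V=\{x,y\}$ recovers $R(x,y)=(\inf\{\mcE(f,f):f\in\mcF,\,f(x)=0,\,f(y)=1\})^{-1}$, so $(\mcE,\mcF)$ is indeed associated with $(K,R)$. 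For uniqueness, any resistance form with this effective resistance has, by Proposition \ref{prop31}, trace $\mcD_V$ on every finite $V$; since (RF4) makes its members $R$-continuous and hence determined by their values on $V_*$, a limiting argument using (RF2) for both forms identifies it with $(\mcE,\mcF)$.

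Finally, for (b), assume $(K,R)$ locally compact and $\mu$ Radon with full support. The Markov property of the Dirichlet form is exactly (RF5), since normal contractions do not increase $\mcE$. Density of $\mcF\cap L^2(K,\mu)$ in $L^2(K,\mu)$ I would obtain by first producing a rich supply of compactly supported functions in $\mcF$ that separate points --- for instance suitable truncations of $\mcD_V$-harmonic extensions of finitely supported data, or of $x\mapsto R(x_0,x)$, whose availability uses local compactness together with the resistance estimate --- and then applying Stone--Weierstrass to conclude density in $C_c(K)$ and hence in $L^2(K,\mu)$. Closedness of $(\mcE,\mcF\cap L^2(K,\mu))$ I would get by taking an $\mcE_1$-Cauchy sequence (with $\mcE_1=\mcE+\langle\cdot,\cdot\rangle_{L^2(\mu)}$), extracting its $L^2$-limit $f$, and using the resistance estimate together with full support of $\mu$ to upgrade $L^2$-convergence to the pointwise control that places $f$ in $\mcF$ with $\mcE(f_n-f,f_n-f)\to0$. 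The step I expect to be the main obstacle is the completeness axiom (RF2) in the second paragraph --- showing that $\mcE$-limits of finite-energy functions remain in $\mcF$ --- because it is precisely there that completeness of $(K,R)$, the resistance estimate $|f(x)-f(y)|^2\le R(x,y)\mcE(f,f)$, and the careful handling of additive constants all have to be combined; once this is in place, the density-and-closedness arguments for (b) reuse the same estimate and are routine.
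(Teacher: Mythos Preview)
The paper does not prove this theorem; it is quoted verbatim from Kigami \cite{ki3} (see Chapter~2 there, especially the construction of a resistance form from a resistance metric via compatible sequences of finite traces), so there is no ``paper's own proof'' to compare against. Your outline is exactly the standard projective-limit construction carried out in that reference: compatibility of the $\mcD_V$ via Proposition~\ref{prop31}, the limit form on a dense $V_*$ with extension by $R$-uniform continuity, verification of (RF1)--(RF5), and the trace identification for (a); so in substance you are reproducing the cited proof rather than offering an alternative.

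Two small points worth tightening. For uniqueness, your sketch (``trace on every finite $V$ agrees, members are $R$-continuous, then use (RF2)'') is the right shape, but you should say explicitly why $\mcF'=\mcF$ and not merely $\mcF'\subset\mcF$: one direction uses that $f\in\mcF'$ has $\sup_V \mcD_V(f|_V,f|_V)\le\mcE'(f,f)<\infty$, hence $f\in\mcF$ with $\mcE(f,f)\le\mcE'(f,f)$; the reverse inequality needs (RF2) for $(\mcE',\mcF')$ applied to the harmonic extensions $h_{f|_{V_m}}$, which is where completeness of the competing form actually enters. For (b), the closedness step is cleaner than you make it sound: an $\mcE_1$-Cauchy sequence is both $\mcE$-Cauchy and $L^2$-Cauchy, so (RF2) gives a limit $g\in\mcF$ modulo constants and $L^2$ gives a limit $f$; the resistance estimate forces $f_n\to g$ locally uniformly (after fixing the constant at one point), and full support of $\mu$ then pins $f=g$ $\mu$-a.e., hence $f\in\mcF\cap L^2$. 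Your identification of (RF2) as the crux is accurate.
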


\subsection{Stochastic processes on the resistance space} From now on, we assume that $(K,R)$ is a proper separable resistance space. \emph{We will always use $(\mcE,\mcF)$ to denote the associated resistance form. Also, we fix a Radon measure $\mu$ with full support on $K$ (in particular, the choice of $\mu$ will affect the time \cite{CHK}, while the path will not be influenced, so $\mu$ is of less interest in this paper).}\vspace{0.15cm}

\noindent\textbf{\emph{Proper}.} A metric space is called proper if  every closed bounded subset of $K$ is compact. \vspace{0.15cm}

\emph{Also, \textbf{in the rest of this section, and in Section 4}, we will use the notation $B_R(x,\rho)=\{y\in K:R(x,y)<\rho\}$ to denote the open ball in $K$ with respect to the resistance metric $R$; we will use the notation $\text{cl}(U)$ for the closure of $U$ in $(K,R)$; we will use the notation $\partial U$ for the boundary of $U$ in $(K,R)$. Finally, for each $f\in l(K)$, we denote the (closed) support of $f$ by $\text{supp}(f)=\text{cl}(\{x\in K:f(x)\neq 0\})$.}\vspace{0.15cm}

To have a well defined stochastic process on $(K,R)$, we need the additional assumption named `Regular property'. Also, we are interested in continuous paths, so we need the `local property' as well. Here we state both properties the setting of resistance forms. Also read \cite{CF,FOT} for stories in the more general setting of Dirichlet forms.\vspace{0.15cm}

\noindent\textbf{Regular.} We say $(\mcE,\mcF)$ is regular if $\mcF$ is dense in $C_0(K)$ with respect to the supremum norm, where $C_0(K)=\{f\in C(K):\lim\limits_{x\to\infty}f(x)=0\}$.

\noindent\textbf{Local.}  We say $(\mcE,\mcF)$ is local if $\mcE(f,g)=0$ for any compactly supported functions $f,g\in \mcF$ such that $\text{supp}(f)\cap \text{supp}(g)=\emptyset$. \vspace{0.15cm}

By the arguments in Section 2.4 of \cite{ki3}, if $(\mcE,\mcF)$ is local regular, then $(\mcE,\mcF_0)$ is a local regular Dirichlet form on $L^2(K,\mu)$, where $\mcF_0$ is the closure of $\mcF\cap C_0(K)$ with respect to the $\mcE_1$ norm (the $\mcE_1$ inner product is defined as $\mcE_1(f,f)=\mcE(f,f)+\|f\|^2_{L^2(K,\mu)}$). Hence, by Theorem 7.2.1 and Theorem 4.5.1 of \cite{FOT}, there is a diffusion process (Hunt process with continuous sample paths) $(\Omega,X_t,\mathbb{P}_x)$ associated with $(\mcE,\mcF)$ on $K$. In addition, the process is unique by Theorem 4.2.8 of \cite{FOT}, noticing that each point has positive capacity in our setting.

In the remaining of this section, we consider the properties of entry times/hitting times. \vspace{0.15cm}

\noindent\emph{\textbf{Entry time}.} For each Borel $A\subset K$, we define the entry time of $A$ as 
\[\tau_A:=\inf\{t\geq 0:X_t\in A\},\]
where we still admit the setting $\inf\emptyset=\infty$.
 
For each \textit{finite subset} $V$ of $K$, we define $\tau^{(0)}_V:=\tau_V$, and iteratively let  
\[
\tau_V^{(n)}:=\inf\big\{t\geq\tau^{(n-1)}_V:X_t\in V\setminus \{X_{\tau^{(n-1)}_V}\}\big\},\qquad\forall n\geq 1.
\] 
\noindent\emph{\textbf{Shift mapping}.} Guaranteed by the definition of the Hunt processes \cite{FOT}, there are shift mappings $\theta_s:\Omega\to \Omega$, $s\in [0,\infty)$ such that $X_t\circ \theta_s=X_{t+s}$ for any $t,s\in [0,\infty)$.\vspace{0.15cm}

In the rest of this paper, we will focus on a simple case that hitting is guaranteed: let $U$ be a bounded open subset of $K$, and let $A=K\setminus U$, then $\mathbb{P}_x(\tau_A<\infty)=1$, for each $x\in U$. Now, we consider a random walk (Markov chain) on $V$ defined as the trace of $X$, and it is `killed' at $A$.

\begin{definition}\label{def33}
Let $A=K\setminus U$ for some bounded open $U\subsetneq K$, and let $V\subset U$ be a finite subset. We define
\[
X^{[V,A]}_n=
\begin{cases}
	X_{\tau_V^{(n)}},&\text{ if }\tau_V^{(n)}<\tau_A,\\
	\Delta,&\text{ if }\tau_V^{(n)}\geq \tau_A,
\end{cases}
\]
for $n\geq 0$, where $\Delta$ is an isolated point (an absorbing state that represents $A$). In addition, for convenience, we define 
\[\xi=\xi_V=\xi_V(\omega):=\min\{n\geq 0: X^{[V,A]}_n=\Delta\}-1.\]
\end{definition}

It is not hard to see that $\big(\Omega,X^{[V,A]}_n,\mathbb{P}_y\big)$ is a Markov chain on $V\cup\{\Delta\}$ (if $X_0=\Delta$, we simply set $X_1=\Delta=X_2=\cdots$). 

Finally, we end this section with some estimates of hitting times.

\begin{definition}\label{def34}
	Let $A,A'$ be disjoint closed subsets of $K$. We define the effective resistance between $A,A'$ as  
	\[R(A,A')=\big(\inf\{\mathcal{E}(f,f):f\in \mathcal{F},f|_A=0,f|_{A'}=1\}\big)^{-1}.\]
\end{definition}

\begin{lemma}\label{lemma35}
	Let $A=K\setminus U$ for some bounded open $U\subsetneq K$, and let $x,y\in U$. Then 
	\[\mathbb{P}_x(\tau_y<\tau_A)\geq 1-\frac{R(x,y)}{R(x,A)-R(x,y)}.\]
\end{lemma}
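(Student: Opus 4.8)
The plan is to reduce the estimate on $\mathbb{P}_x(\tau_y<\tau_A)$ to elementary potential theory for the resistance form, using the standard fact that, for points $x,y$ with $y$ and $A$ disjoint, the probability $\mathbb{P}_x(\tau_y<\tau_A)$ is the value at $x$ of the harmonic function $h$ with $h|_A=0$ and $h(y)=1$. Concretely, let $h$ be the $\mathcal{E}$-harmonic function on $U\setminus\{y\}$ with boundary values $1$ at $y$ and $0$ on $A$; then $h(x)=\mathbb{P}_x(\tau_y<\tau_A)$, and $\mathcal{E}(h,h)=1/R(\{y\},A)$. The idea is to compare $h$ at $x$ against a simpler harmonic comparison.

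First I would record two resistance identities. From the series/triangle-type behaviour of the resistance metric one has $R(x,A)\le R(x,y)+R(y,A)$, hence $R(y,A)\ge R(x,A)-R(x,y)$; note also that $R(x,A)-R(x,y)>0$ is exactly the hypothesis that makes the bound nontrivial, so we may assume it. Next, starting from $x$, before reaching $y$ the walk must either hit $A$ or hit $y$, and a union-type bound on potentials gives $\mathbb{P}_x(\tau_y<\tau_A)\ge 1-\mathbb{P}_x(\tau_A<\tau_y)$, which is trivial; the substance is to bound $\mathbb{P}_x(\tau_A<\tau_y)$ from above by $R(x,y)/R(y,A)$ — equivalently, the potential-theoretic estimate $1-h(x)\le R(x,y)/R(y,A)$. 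The clean way is: the function $1-h$ is $\mathcal{E}$-harmonic off $\{y\}\cup A$, equals $0$ at $y$ and $1$ on $A$, so $1-h(x)=\mathbb{P}_x(\tau_A<\tau_y)$ and, by the variational characterisation of effective resistance applied on the finite set $\{x,y\}\cup(\text{a single point for }A)$ (the trace of the resistance form, as in Section 3), $\mathbb{P}_x(\tau_A<\tau_y)$ equals the ratio $R(x,y)/R(x,\{y,A\text{-point}\})$-type expression, which is at most $R(x,y)/R(y,A)$ because restricting the network only decreases resistances in the relevant monotone direction. Combining this with $R(y,A)\ge R(x,A)-R(x,y)$ yields $\mathbb{P}_x(\tau_y<\tau_A)\ge 1-R(x,y)/(R(x,A)-R(x,y))$.

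The cleanest route, which I would actually write, bypasses harmonic functions and uses Green's functions directly, mirroring the calculations already in Section 2: pass to a finite set $V$ containing $x$ and $y$ and approximating $K$, work with the trace chain $X^{[V,A]}$, use $G_B(x,y)=\mathbb{P}_x(\tau_y<\tau_{V\setminus B})\,G_B(y,y)$ together with the interpretation $G_{B}(y,y)=R(y,\partial)$-type effective resistances, and take $V$ dense. The triangle inequality for $R$ and the monotonicity of effective resistance under network modification are the only structural inputs; everything else is the Markov-chain identities from Lemma \ref{lemma25} and the strong Markov property.

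The main obstacle I anticipate is the passage from the discrete (finite $V$) estimate to the continuous diffusion: one must justify that $\mathbb{P}_x^{[V,A]}(\tau_y<\tau_A)\to \mathbb{P}_x(\tau_y<\tau_A)$ (or at least converges to something $\ge$ the claimed bound) as $V$ exhausts a dense set, and that the discrete effective resistances converge to the resistance-form quantities $R(x,y)$, $R(x,A)$. This uses properness, the fact that each point has positive capacity (so hitting $y$ is a nondegenerate event), and the convergence of traces of the resistance form — all available in the resistance-space framework of Section 3 — but assembling it carefully is where the real work lies; the inequality itself, once the potential-theoretic identities are in place, is just the triangle inequality for $R$.
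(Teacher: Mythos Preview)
Your outline is the same as the paper's: identify $h(x)=\mathbb{P}_x(\tau_y<\tau_A)$ for the harmonic $h$ with $h(y)=1$, $h|_A=0$, prove the key estimate $1-h(x)\le R(x,y)/R(y,A)$, and finish with the triangle-type inequality $R(y,A)\ge R(x,A)-R(x,y)$. The gap is in how you justify the key estimate. Your variational remark $\mcE(h,h)=1/R(y,A)$ together with $|h(x)-h(y)|^2\le R(x,y)\,\mcE(h,h)$ only yields the \emph{squared} bound $(1-h(x))^2\le R(x,y)/R(y,A)$, which is too weak. Your three-point-trace idea is actually fine, but the sentence about ``restricting the network only decreases resistances in the relevant monotone direction'' is not a proof; if you trace onto $\{x,y,\Delta\}$ (with $\Delta$ standing for $A$) and write the conductances $a=c_{xy}$, $b=c_{x\Delta}$, $c=c_{y\Delta}$, you get $1-h(x)=b/(a+b)$ while $R(x,y)/R(y,A)=(b+c)/(a+b)$, so the inequality is immediate --- but you need to say this, not wave at monotonicity.

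The paper avoids this computation by passing through the Green's function $g_U$ on $U$: one has $g_U(y,y)=R(y,A)$ and $g_U(x,y)=h(x)\,R(y,A)$, and then invokes the Lipschitz bound $g_U(y,y)-g_U(x,y)\le R(x,y)$ (Theorem~4.3 of \cite{ki5}), which gives $1-h(x)\le R(x,y)/R(y,A)$ in one line. This is the missing ingredient you should cite or reprove. The same reference also supplies $R(y,A)\ge R(x,A)-R_A(x,y)\ge R(x,A)-R(x,y)$ directly, so the triangle step is already packaged there. Your proposed ``cleanest route'' via discrete approximation and a limiting argument is an unnecessary detour: the resistance-form machinery gives the harmonic function, the Green's function, and the Lipschitz bound on $K$ itself, so there is no need to approximate and then worry about convergence of hitting probabilities.
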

\begin{proof}
	Let $f\in l(A\cup\{y\})$ be defined as $f(y)=1$ and $f(z)=0,\forall z\in A$. Let $h_f\in \mcF$ be the harmonic extension of $f$, i.e. $h_f$ is the unique extension of $f$ such that $\mcE(h_f,h_f)=\min\{\mcE(g,g):g\in \mcF,g|_{A\cup\{y\}}=f\}$. We have (see, for example, Proposition 2.5 (a) of \cite{BBKT}) that
	\[h_f(x)=\mathbb{P}_x(\tau_y<\tau_A).\] 
	In addition, $g_U(x,y)=h_f(x)\cdot R(y,A)$ is the Green's function on $U$, i.e. $\mcE\big(g_U(\cdot,y),u\big)=u(y)$ for all $u\in \mcF$ such that $u|_A=0$. By Theorem 4.3 of \cite{ki5}, we know $g_U(y,y)-g_U(x,y)\leq R(x,y)$, hence $h_f(y)-h_f(x)\leq R(x,y)/R(y,A)$, which implies $\mathbb{P}_x(\tau_y<\tau_A)\geq 1-R(x,y)/R(y,A)$.
	
	The lemma follows immediately, noticing that $R(y,A)\geq R(x,A)-R(x,y)$. In fact, Theorem 4.3 of \cite{ki5} implies $R(y,A)\geq R(x,A)-R_A(x,y)$, where $R_A(x,y)=\big(\inf\{\mcE(f,f):f(x)=1,f(y)=0,f\text{ is a constant on }A\}\big)^{-1}\leq R(x,y)$. 
\end{proof}

The following result is a consequence of Lemma 18.1 of \cite{ki5}. 
\begin{lemma}[\cite{ki5}]\label{lemma36}
Let $A=K\setminus U$ for some bounded open $U\subsetneq K$. Then
	\[\frac{1}{2}\mu(U')R(x,A)\leq \mathbb{E}_x(\tau_A)\leq \mu(U)R(x,A),\]
where $U'=\{y\in U:R(x,y)<\frac12 R(x,A)\}$. 
\end{lemma}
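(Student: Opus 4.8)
The strategy is to represent the mean entry time by the Green's function on $U$ and then bound the integrand pointwise. Recall from the proof of Lemma \ref{lemma35} that the Green's function $g_U$ on $U$ (characterized by $\mcE(g_U(\cdot,y),u)=u(y)$ for all $u\in\mcF$ with $u|_A=0$) satisfies $g_U(x,y)=\mathbb{P}_x(\tau_y<\tau_A)\cdot R(y,A)$; it is symmetric, and since $\mathbb{P}_y(\tau_y<\tau_A)=1$ (because $y\in U$ does not lie in the closed set $A$) we get $g_U(y,y)=R(y,A)$, hence $g_U(x,x)=R(x,A)$ and $g_U(x,y)=\mathbb{P}_y(\tau_x<\tau_A)\cdot R(x,A)$. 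I would then invoke the occupation-time identity
\[
\mathbb{E}_x(\tau_A)=\int_{U}g_U(x,y)\,d\mu(y),
\]
which is the content of Lemma 18.1 of \cite{ki5}. Note $\mu(U)<\infty$ because $U$ is bounded and $(K,R)$ is proper, so all the integrals below are finite.

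For the upper bound, $\mathbb{P}_y(\tau_x<\tau_A)\leq 1$ gives $g_U(x,y)\leq R(x,A)$ for every $y\in U$, and since $g_U(x,\cdot)$ vanishes on $A$ we obtain $\mathbb{E}_x(\tau_A)=\int_U g_U(x,y)\,d\mu(y)\leq \mu(U)R(x,A)$. For the lower bound, I would keep only the contribution of $U'$ and bound $g_U$ from below there: by Theorem 4.3 of \cite{ki5} (already used in Lemma \ref{lemma35} in the form $g_U(x,x)-g_U(x,y)\leq R(x,y)$) together with $g_U(x,x)=R(x,A)$, we have $g_U(x,y)\geq R(x,A)-R(x,y)$, so for $y\in U'$, where $R(x,y)<\tfrac12 R(x,A)$, we get $g_U(x,y)>\tfrac12 R(x,A)$. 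Integrating over $U'$ yields $\mathbb{E}_x(\tau_A)\geq\int_{U'}g_U(x,y)\,d\mu(y)\geq\tfrac12\mu(U')R(x,A)$.

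The only non-elementary ingredient is the identity $\mathbb{E}_x(\tau_A)=\int_U g_U(x,y)\,d\mu(y)$: one needs that $g_U(x,\cdot)$ is the $\mu$-density of the Green (potential) measure of the part process of $X$ killed on exiting $U$, and that this analytic Green's function coincides with the resistance-theoretic one used above. For resistance forms both facts are standard and are packaged in Lemma 18.1 of \cite{ki5}, so in the write-up I would simply cite that lemma; if a self-contained derivation were desired, the identity also follows from $\mcE(g_U(\cdot,y),u)=u(y)$ by taking $u(z)=\mathbb{E}_z(\tau_A)$, which lies in $\mcF$ and satisfies $\mcE(u,\varphi)=\int_K\varphi\,d\mu$ for suitable test functions $\varphi$ vanishing on $A$, together with Fubini's theorem. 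This step, rather than the pointwise estimates, is where the care is needed.
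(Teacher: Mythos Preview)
Your proof is correct and is precisely the argument the paper has in mind: the paper gives no proof beyond citing Lemma 18.1 of \cite{ki5}, and what you have written is exactly the standard derivation from that occupation-time identity together with the pointwise Green's function bounds $g_U(x,y)\leq g_U(x,x)=R(x,A)$ and $g_U(x,y)\geq g_U(x,x)-R(x,y)$ already used in the proof of Lemma \ref{lemma35}. There is nothing to add.
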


\section{Loop-erased random paths on resistance spaces}
In this section, using Theorem \ref{th1}, we prove Theorem \ref{th2}, which defines LERP on a resistance space $(K,R)$, as the limit of the LE of Markov chains on approximating sequences of finite subsets of $K$. We will see that the LERP can also be viewed as the paths obtained by erasing loops from the sample paths of the diffusion processes in a reasonable order (Theorem \ref{thm49}). 

The proof will be divided into three parts. In the first step, we introduce the partial loop erasure (PLE) of continuous paths, and develop a coupling of the distributions using Theorem \ref{th1}. In the second step, we apply the idea of \cite{S}, with some modifications, to show that the discrete paths are not very different from the continuous paths, and to show that the limit distribution supports on simple paths. Finally, we show that the limit distribution is independent of the approximating sequences of finite sets. 

Before proceeding to the proof, we explain some terminologies formally.  As in the classical cases \cite{Ko,LSW,S}, we will consider the weak convergence of probability measures on the space of compact subsets of $(K,R)$ endowed with the Hausdorff metric.\vspace{0.15cm}

\noindent\textbf{\emph{Hausdorff metric}.} Let $(K,R)$ be a proper metric space, and $A,A'\subset K$ be two compact subsets, we define the Hausdorff metric between $A,A'$ as 
\[
d_H(A,A'):=\inf\{\rho>0:A\subset B_R(A',\rho),A'\subset B_R(A
,\rho)\},
\]
where $B_R(A,\rho)=\bigcup_{x\in A}B_R(x,\rho)$. For convenience, we use the notation $\mathcal{K}=\{A\subset K:A\text{ is compact}\}$, then $(\mathcal{K},d_H)$ is a proper metric space. \vspace{0.2cm}

\noindent\textbf{\emph{Weak convergence}.} Let $\nu_n,n\geq 1$ and $\nu$ be probability measures on $(\mathcal{K},d_H)$. We say $\nu_n$ converges weakly to $\nu$, and write $\nu_n\Rightarrow \nu$ if 
\[\int_\mathcal{K}f(A)\nu_n(dA)\to \int_\mathcal{K}f(A)\nu(dA),\text{ as }n\to\infty,\quad \forall f\in C_b(\mathcal{K}),\]
where $C_b(\mathcal{K})$ is the space of bounded continuous functions on $(\mathcal{K},d_H)$. It is well known that $\nu_n\Rightarrow \nu$ if and only if $\pi(\nu_n,\nu)\to 0$, where $\pi$ is the Prokhorov metric (see Theorem 6.8 of \cite{Bi} for example).

\noindent\textbf{\emph{The Prokhorov metric}.} Let $\nu_1,\nu_2$ be two probability measures on $(\mathcal{K},d_H)$, the Prokhorov metric $\pi(\nu_1,\nu_2)$ is defined as 
\[
\begin{aligned}
	\pi(\nu_1,\nu_2)=\inf\{\varepsilon>0:\nu_2(\mathcal{A})\leq\nu_1(\mathcal{A}^\varepsilon)&+\varepsilon,\  \nu_1(\mathcal{A})\leq\nu_2(\mathcal{A}^\varepsilon)+\varepsilon,\\
	&\text{ for any Borel subset }\mathcal{A}\text{ of }\mathcal{K}\},
\end{aligned}
\]
where $\mathcal{A}^\varepsilon=\{A'\in \mathcal{K}:d_H(A,A')<\varepsilon\}$. 
\vspace{0.2cm}

\noindent\textbf{\emph{Image}.} We will view paths as subsets of $K$. For clearance, for any mapping $f:K_1\to K_2$, we denote the image of $f$ by $\Im f=\Im(f)=:\{f(x):x\in K_1\}$. \vspace{0.2cm} 

\noindent\textbf{\emph{Simple path}.} We say a continuous path $\gamma:[0,s]\to K$ is simple if $\gamma(t_1)\neq \gamma(t_2),\forall 0\leq t_1<t_2\leq s$. We also call $\Im(\gamma)$ a simple path. \vspace{0.2cm} 

\subsection{A natural coupling via PLE} One can naturally define the PLE of a continuous curve. For convenience, we first introduce some notations.

\begin{definition}\label{def42}
	(a). Let $\gamma:[0,s]\to K$ be a continuous path. With a little abuse of notation, we write $\tau^{(0)}_V(\gamma)=\tau_V(\gamma):=\inf\{t\geq 0:X_t\in V\}$, and iteratively for $n\geq 1$,   
	\[\tau_V^{(n)}=\tau_V^{(n)}(\gamma):=\inf\big\{\tau_V^{(n-1)}(\gamma)<t\leq s:\gamma(t)\in V\setminus \{\gamma(\tau^{(n-1)}_V)\}\big\}.\]	
	In addition, for convenience, we write
	\[\xi=\xi_V=\xi_V(\gamma):=\min\{n\geq 0:\tau_V^{(n)}=\infty\}-1.\]
	
	(b). Let $\gamma_1:[s_1,t_1]\to K,\gamma_2:[s_2,t_2]\to K$ and assume $\gamma_1(t_1)=\gamma_2(s_2)$. We define the concatenation $\gamma_1\oplus\gamma_2:[0,t_1+t_2-s_1-s_2]\to K$ as
	\[\gamma_1\oplus \gamma_2(t)=
	\begin{cases}
		\gamma_1(t+s_1),&\text{ if }0\leq t\leq t_1-s_1,\\
		\gamma_2(t-t_1+s_1+s_2),&\text{ if }t_1-s_1<t\leq t_1+t_2-s_1-s_2.
	\end{cases}\]
\end{definition}

\noindent\textbf{\emph{PLE of a continuous curve.}} Let $\gamma:[0,s]\to K$ be a continuous curve, and let $V_1\subset V_2$ be finite subsets of $K$. Let 
\[\bw=\big(\gamma(\tau_{V_2}^{(0)}),\gamma(\tau_{V_2}^{(1)}),\cdots, \gamma(\tau_{V_2}^{(\xi)})\big),\text{ if }\xi_{V_2}\geq 0,\]
and let $\mfI_{V_1}(\bw)=(n_0,n_1,\cdots,n_\iota)$ if $\xi\geq 0$ (in particular, $n_0=0$). 

Then we define the PLE (associated with $V_1,V_2$) of $\gamma$ as 
\[
\mfL_{V_2,V_1}(\gamma)=
	\gamma|_{[0,\tau_{V_2}^{(0)}]}\oplus \gamma|_{[\tau_{V_2}^{(n_1-1)},\tau_{V_2}^{(n_1)}]}\oplus\cdots\oplus\gamma|_{[\tau_{V_2}^{(n_{\iota-1})},\tau_{V_2}^{(n_\iota)}]}\oplus \gamma|_{[\tau_{V_2}^{(n_\iota)},s]}
\]
if $\xi_{V_2}(\gamma)\geq 0$, and simply let $\mfL_{V_2,V_1}(\gamma)=\gamma$ if $\xi_{V_2}(\gamma)=-1$. 

In particular, we will write $\mfL_{V_1}=\mfL_{V_1,V_1}$ for short.

\begin{proposition}\label{prop43}
	Assume all the settings of Theorem \ref{th2}, and let $V_1\subsetneq V_2\subsetneq V_3\subsetneq \cdots$ be a sequence of finite subsets of $U$. Then, for each $x\in U$ and $m\geq 1$, we have
	\[
	\mathbb{P}_x\big(\mfL_{V_m}(X|_{[0,\tau_A]})\in \bullet\big)
	=\mathbb{P}_x\big(\mfL_{V_m}\circ \mfL_{V_m,V_{m-1}}\circ \cdots\circ\mfL_{V_3,V_2}\circ\mfL_{V_2,V_1}(X|_{[0,\tau_A]})\in \bullet\big),
	\]
	where $X|_{[0,\tau_A]}:[0,\tau_A]\to K$ is viewed a random path.  
\end{proposition}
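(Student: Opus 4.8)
The plan is to reduce the continuous statement in Proposition \ref{prop43} to the discrete Theorem \ref{th1}, by passing from the diffusion $X$ to its trace on a suitable finite set. First I would observe that both sides of the claimed identity are functions of the discrete path obtained by sampling $X$ along the skeleton of $V_m$; more precisely, set $V:=V_m$ and consider the trace Markov chain $X^{[V_m,A]}|_{[0,\xi]}=\big(X_{\tau_{V_m}^{(0)}},\dots,X_{\tau_{V_m}^{(\xi)}}\big)$ of Definition \ref{def33}. By the definition of $\mfL_{V_m}$ for continuous paths, $\mfL_{V_m}(X|_{[0,\tau_A]})$ is the concatenation of the bridges $X|_{[\tau_{V_m}^{(n_{i-1})},\tau_{V_m}^{(n_i)}]}$ selected by running the \emph{discrete} loop-erasure algorithm $\mfL$ on the finite word $X^{[V_m,A]}|_{[0,\xi]}$ (together with the initial and terminal bridges into $A$). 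Likewise, the right-hand side is the concatenation of bridges selected by running the composition $\mfL_{V_m}\circ\mfL_{V_m,V_{m-1}}\circ\cdots\circ\mfL_{V_2,V_1}$ of \emph{discrete} PLE operators on the same finite word: one checks directly from the definition of $\mfL_{V_{j+1},V_j}(\gamma)$ that it reorganizes $\gamma$ as a concatenation of $V_{j+1}$-bridges, exactly in the pattern prescribed by applying the discrete operator $\mfL_{V_j}$ (restricted to $V_j\subset V_{j+1}$) to the word $\big(\gamma(\tau_{V_{j+1}}^{(k)})\big)_k$. So the continuous identity will follow once I know that the \emph{set of selected $V_m$-bridges}, as an ordered tuple of time-intervals, has the same law on both sides.

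Next I would make this precise with a conditioning argument in the spirit of Lemma \ref{lemma28}. Conditioned on the finite word $\bw:=X^{[V_m,A]}|_{[0,\xi]}$, the individual bridges $X|_{[\tau_{V_m}^{(k-1)},\tau_{V_m}^{(k)}]}$ are, by the strong Markov property of the Hunt process, mutually independent with law depending only on their endpoints, and the terminal bridge $X|_{[\tau_{V_m}^{(\xi)},\tau_A]}$ similarly. Therefore, for \emph{any} deterministic rule that selects an ordered sub-tuple of these bridges as a function of $\bw$ alone, the law of the resulting concatenated continuous path is determined by (i) the law of that selected ordered sub-word of $\bw$, and (ii) the common bridge kernels. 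Both the left-hand rule ($\mfL$) and the right-hand rule ($\mfL_{V_m}\circ\cdots\circ\mfL_{V_2,V_1}$), read off as rules on the word $\bw$, are exactly $\mfL$ and $\mfL_{V_m}\circ\cdots\circ\mfL_{V_2}\circ\mfL_{V_1}$ of Theorem \ref{th1} applied to the finite Markov chain $\bw$ on $V_m$. Since $A=K\setminus U$ with $U$ bounded open forces $\mathbb{P}_x(\tau_A<\infty)=1$, hence $\xi<\infty$ a.s., Theorem \ref{th1} applies and gives that these two selected sub-words of $\bw$ have the same law. Combining (i)–(ii), the two concatenated continuous paths have the same law, which is the assertion.

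A few routine points need to be pinned down. I must verify the \emph{combinatorial} claim that the continuous operator $\mfL_{V_{j+1},V_j}$ really does implement, at the level of the $V_{j+1}$-skeleton word, the discrete operator $\mfL_{V_j}$ (viewing $V_j$ as a subset of $V_{j+1}$): this is a direct unwinding of Definition \ref{def42} and the definition of $\mfL_{V_2,V_1}(\gamma)$, using that the hitting sequence of $V_j$ inside $\gamma$ is a subsequence of the hitting sequence of $V_{j+1}$. I also need that the composition on the right, applied to $\gamma=X|_{[0,\tau_A]}$, never revisits points of $V_{j}$ once erased in a way that would break the bridge decomposition — but this is automatic because after the final $\mfL_{V_m}$ the output is self-avoiding on $V_m$, and each intermediate $\mfL_{V_{j+1},V_j}$ only deletes whole $V_{j+1}$-excursions. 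Finally, the edge cases $\xi_{V_m}=-1$ (the diffusion reaches $A$ before hitting $V_m$) are handled by the convention $\mfL_{V_2,V_1}(\gamma)=\gamma$ and are consistent on both sides.

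The main obstacle, I expect, is not any single hard estimate but the bookkeeping in the combinatorial reduction: one must set up notation carefully enough that ``the selected ordered sub-tuple of bridges'' is literally a measurable function of the skeleton word $\bw$ and matches, index for index, the output of the discrete operators in Theorem \ref{th1}, including the handling of the initial bridge $\gamma|_{[0,\tau_{V_m}^{(0)}]}$ and the terminal bridge $\gamma|_{[\tau_{V_m}^{(n_\iota)},\tau_A]}$ (which sit outside the scope of $\mfL$ on the word but must be carried along unchanged). Once that dictionary is in place, the probabilistic content is entirely supplied by the strong Markov property plus Theorem \ref{th1}, with no new analysis on $(K,R)$ required.
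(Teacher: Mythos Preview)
Your proposal is correct and follows essentially the same strategy as the paper: reduce to the discrete Theorem~\ref{th1} by passing to the trace chain on a finite set and invoking the strong Markov property in the style of Lemma~\ref{lemma28} to handle the conditionally independent bridges. The paper packages this slightly differently---it proves the $m=2$ case directly from Theorem~\ref{th1} and Lemma~\ref{lemma28}, then inducts using the deterministic identity $\mfL_{V_{k+1},V_k}\circ\mfL_{V_k}=\mfL_{V_{k+1},V_k}$---whereas you aim to apply Theorem~\ref{th1} once to the full chain $V_1\subset\cdots\subset V_m$ on the $V_m$-trace. The combinatorial verification you flag (that the continuous composition, read off on the $V_m$-skeleton, is exactly the discrete composition $\mfL_{V_m}\circ\cdots\circ\mfL_{V_1}$) is precisely the content the paper's induction and identity absorb; it is correct but a bit more delicate than ``direct unwinding'' suggests, since $\mfL_{V_{j+1},V_j}$ looks at the $V_{j+1}$-skeleton rather than the $V_m$-skeleton, and one must check that the ``last occurrence'' of a $V_j$-point agrees across the two levels. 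The paper's inductive organization sidesteps this by only ever comparing two consecutive levels at a time.
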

\begin{proof}
By applying Theorem \ref{th1} and using the strong Markov property as in Lemma \ref{lemma28}, we can see $\mathbb{P}_x\big(\mfL_{V_2}(X|_{[0,\tau_A]})\in \bullet\big)
=\mathbb{P}_x\big(\mfL_{V_2}\circ\mfL_{V_2,V_1}(X|_{[0,\tau_A]})\in \bullet\big)$. Next, noticing that $\mfL_{V_{k+1},V_k}\circ\mfL_{V_k}=\mfL_{V_{k+1},V_k}$ for $k=2,3,\cdots m-1$, we can apply induction hypothesis to complete the proof.	
\end{proof}

\begin{definition}\label{def44}
Assume all the settings of Theorem \ref{th2}, and let $V_1\subsetneq V_2\subsetneq V_3\subsetneq \cdots$ be a sequence of finite subsets of $U$ that satisfies $V_*=\bigcup_{m=0}^\infty V_m$ is dense in $U$. We define (which in fact depends on the sequence, not merely on $V_*$) 
\[
\Im\mfL_{V_*}(X|_{[0,\tau_A]})=\bigcap_{m=1}^\infty \Im\mfL_{V_m}\circ \mfL_{V_m,V_{m-1}} \circ\cdots\circ\mfL_{V_2,V_1}(X|_{[0,\tau_A]}).
\]
In addition, we write $\nu^*_{x,A}=\mathbb{P}_x\big(\Im\mfL_{V_*}(X|_{[0,\tau_A]})\in \bullet\big)$ for each $x\in U$.
\end{definition}

\begin{corollary}\label{coro45}
Assuming all the settings of Definition \ref{def44}. Then
	\[\mathbb{P}_x\big(\Im\mfL_{V_m}(X|_{[0,\tau_A]})\in \bullet\big)\Rightarrow \nu_{x,A}^*, \text{ as }m\to\infty\]
	for each $x\in U$ on $(\mathcal{K},d_H)$.
\end{corollary}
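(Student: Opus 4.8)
The plan is to deduce the weak convergence from the monotone/nested structure built into Definition 4.4 together with Proposition 4.3. The key point is that by Proposition 4.3, for every $m$ the random compact set $\Im\mfL_{V_m}(X|_{[0,\tau_A]})$ has the \emph{same distribution} as $\Im\mfL_{V_m}\circ\mfL_{V_m,V_{m-1}}\circ\cdots\circ\mfL_{V_2,V_1}(X|_{[0,\tau_A]})$, which I will abbreviate as $\Im Y_m$; so it suffices to prove that $\Im Y_m \Rightarrow \nu^*_{x,A}$, and in fact I will prove the stronger almost sure convergence $\Im Y_m \to \Im\mfL_{V_*}(X|_{[0,\tau_A]})$ in $(\mathcal{K},d_H)$, which trivially implies weak convergence of the laws. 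First I would observe that along a fixed sample path $\omega$, the sequence $\Im Y_m$ is \emph{decreasing}: applying one more partial loop-erasing operator $\mfL_{V_{m+1},V_m}$ (followed by $\mfL_{V_{m+1}}$) only deletes portions of the curve, so $\Im Y_{m+1}\subseteq \Im Y_m$; this is the observation underlying the fact that the intersection in Definition 4.4 is well-defined and nonempty (each $\Im Y_m$ is compact and nonempty, containing at least $x$ and a point of $A$).

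Next I would recall the standard fact that a decreasing sequence of nonempty compact subsets of a proper metric space converges in the Hausdorff metric to its intersection: if $C_1\supseteq C_2\supseteq\cdots$ are nonempty compact sets and $C_\infty=\bigcap_m C_m$, then $d_H(C_m,C_\infty)\to 0$. (One direction, $C_\infty\subseteq B_R(C_m,\rho)$, is immediate since $C_\infty\subseteq C_m$; for the other, $C_m\subseteq B_R(C_\infty,\rho)$ eventually, one argues by contradiction: if some $x_m\in C_m\setminus B_R(C_\infty,\rho)$ for infinitely many $m$, a subsequential limit lies in every $C_m$, hence in $C_\infty$, contradicting the distance $\geq\rho$.) Applying this with $C_m=\Im Y_m(\omega)$ and $C_\infty=\Im\mfL_{V_*}(X|_{[0,\tau_A]})(\omega)$ gives $\Im Y_m\to\Im\mfL_{V_*}(X|_{[0,\tau_A]})$ pointwise on $\Omega$, hence $\mathbb P_x$-almost surely.

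Finally, almost sure convergence of $(\mathcal{K},d_H)$-valued random variables implies convergence in distribution, so $\mathbb{P}_x(\Im Y_m\in\bullet)\Rightarrow\nu^*_{x,A}$; combining with Proposition 4.3, which identifies $\mathbb{P}_x(\Im\mfL_{V_m}(X|_{[0,\tau_A]})\in\bullet)$ with $\mathbb{P}_x(\Im Y_m\in\bullet)$ for each $m$, yields the claim. The one point needing a small amount of care is the monotonicity $\Im Y_{m+1}\subseteq\Im Y_m$: I would verify it from the definitions of the PLE operators for continuous curves (Definition 4.2 and the paragraph defining $\mfL_{V_2,V_1}(\gamma)$), noting that $\mfL_{V_{m+1},V_m}\circ\mfL_{V_m}=\mfL_{V_{m+1},V_m}$ (used already in the proof of Proposition 4.3) so that $Y_{m+1}=\mfL_{V_{m+1}}\circ\mfL_{V_{m+1},V_m}(Y_m')$ where $\Im Y_m'=\Im Y_m$, and each of $\mfL_{V_{m+1},V_m}$ and $\mfL_{V_{m+1}}$ can only remove pieces of the trajectory. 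I expect this monotonicity check, rather than the topological convergence lemma, to be the only real content; everything else is bookkeeping with the already-established Proposition 4.3 and standard facts about weak convergence.
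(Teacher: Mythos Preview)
Your proposal is correct and follows essentially the same route as the paper: use Proposition~4.3 to replace $\Im\mfL_{V_m}(X|_{[0,\tau_A]})$ by the nested sequence $\Im Y_m$, observe that this sequence is shrinking so that it converges in $d_H$ to its intersection $\Im\mfL_{V_*}(X|_{[0,\tau_A]})$, and conclude weak convergence from almost sure convergence. The only minor wrinkle is your sentence ``$\Im Y_m'=\Im Y_m$'': what the identity $\mfL_{V_{m+1},V_m}\circ\mfL_{V_m}=\mfL_{V_{m+1},V_m}$ actually gives you is $Y_{m+1}=\mfL_{V_{m+1}}\circ\mfL_{V_{m+1},V_m}(Y_m)$ directly, which is all you need for $\Im Y_{m+1}\subseteq\Im Y_m$.
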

\begin{proof}
	Noticing that $\Im\mfL_{V_m}\circ \mfL_{V_m,V_{m-1}}\circ \cdots\circ\mfL_{V_2,V_1}(X|_{[0,\tau_A]}),m\geq 1$ forms a shrinking sequence of connected compact sets, $\Im\mfL_{V_*}(X|_{[0,\tau_A]})$ is also connected compact, and in addition
	\[\lim\limits_{m\to\infty}d_H\big(\Im\mfL_{V_*}(X|_{[0,\tau_A]}),\Im\mfL_{V_m}\circ \mfL_{V_m,V_{m-1}}\circ \cdots\circ\mfL_{V_2,V_1}(X|_{[0,\tau_A]})\big)=0.\]
	The corollary follows immediately from Proposition \ref{prop43}. 
\end{proof}

\subsection{Loops and long jumps} In this section, for a fixed sequence $V_1\subsetneq V_2\subsetneq \cdots$, we will show that the limit distribution in Theorem \ref{th2}, denoted by $\nu_{x,A}$, is in fact $\nu^*_{x,A}$ introduced in Definition \ref{def44}. To achieve this, our main goal in this part consists of two parts: 
\begin{itemize}
	\item show that $\nu^*_{x,A}$ supports on simple curves;
	
	\item show that $\Im\mfL(X^{[V_m,A]}|_{[0,\xi]})$ and $\Im\mfL_{V_m}(X|_{[0,\tau_A]})$ are not so different. 
\end{itemize} 

Some key arguments in this part are inspired by O. Schramm \cite{S}. There are some differences in our setting compared with \cite{S}: first, since we are looking at strongly recurrent diffusions, we can erase loops directly from the continuous paths, so we only need to take care of loops instead of quasi-loops; second, we need to take care of possible long jumps, since the Markov chains we consider here are not defined on good graphs in general.\vspace{0.15cm}

Let $V$ be a finite subset of $K$, $z_0\in K$ and $\rho>0$. 

\noindent\textbf{$(\rho,V)$-Long-jumps.} A $(z_0,\rho,V)$-long-jump in a path $\gamma$ is a subarc $\gamma|_{[s_1,s_2]}$ such that $\gamma(s_1)\in B_R(z_0,\rho)$, $\gamma(s_2)\in K\setminus B_R(z_0,2\rho)$ and  $\Im(\gamma|_{[s_1,s_2]})\cap V=\emptyset$. Let $\mathscr{J}(z_0,\rho,V)$ denote the set of paths that have a $(z_0,\rho,V)$-long-jump.

In addition, a $(\rho,V)$-long-jump in a path $\gamma$ is a subarc $\gamma|_{[s_1,s_2]}$ such that $R\big(\gamma(s_1),\gamma(s_2)\big)\geq \rho$ and $\Im(\gamma|_{[s_1,s_2]})\cap V=\emptyset$. Let $\mathscr{J}(\rho,V)$ denote the set of paths that have a $(\rho,V)$-long-jump.

\noindent\textbf{\emph{$\rho$-Loops}.} A $(z_0,\rho)$-loop in a path $\gamma$ is a subarc $\gamma|_{[s_1,s_2]}$ such that $\gamma(s_1)=\gamma(s_2)\in B_R(z_0,\rho)$ and $\Im(\gamma|_{[s_1,s_2]})\nsubseteq B_R(z_0,2\rho)$. Let $\mathscr{L}(z_0,\rho)$ denote the set of paths that have a $(z_0,\rho)$-loop.

In addition, a $\rho$-loop is a is a subarc $\gamma|_{[s_1,s_2]}$ of $\gamma$ such that $\gamma(s_1)=\gamma(s_2)$ and $\Im(\gamma|_{[s_1,s_2]})\nsubseteq B_R\big(\gamma(s_1),\rho\big)$. Let $\mathscr{L}(\rho)$ denote the set of paths that have a $\rho$-loop.

\begin{lemma}\label{lemma46}
Assume all the settings of Definition \ref{def44}. Let $x,z_0\in U$ and $\rho>0$. Also, we assume $B_R(z_0,2\rho)\subset U$.
	
(a). $\lim\limits_{m\to\infty}\mathbb{P}_x\big(\mfL_{V_m}(X|_{[0,\tau_A]})\in\mathscr{J}(z_0,\rho,V_m)\big)=0$. 
	
(b). $\lim\limits_{m\to\infty}\mathbb{P}_x\big(\mfL_{V_m}(X|_{[0,\tau_A]})\in\mathscr{J}(\rho,V_m)\big)=0.$ 
	
(c). $\lim\limits_{m\to\infty}\mathbb{P}_x\big(\mfL_{V_m}(X|_{[0,\tau_A]})\in\mathscr{L}(z_0,\rho)\big)=0$. 
	
(d). $\lim\limits_{m\to\infty}\mathbb{P}_x\big(\mfL_{V_m}(X|_{[0,\tau_A]})\in\mathscr{L}(\rho)\big)=0$. 
\end{lemma}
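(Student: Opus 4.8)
The plan is to reduce all four statements to a single, uniform mechanism: a loop-erased continuous path of a strongly recurrent diffusion cannot contain a ``macroscopic detour'' once we erase loops, because such a detour forces the diffusion to leave a ball of radius $2\rho$ and return, and on a resistance space the probability of returning to a point after leaving such a ball decays in $\rho$ via the resistance estimate in Lemma \ref{lemma35}. First I would observe the logical dependencies among the four claims. A $(z_0,\rho)$-loop in $\mfL_{V_m}(X|_{[0,\tau_A]})$ corresponds, via the PLE structure (Proposition \ref{prop43} and Lemma \ref{lemma28}), to the existence of a $V_m$-point $v$ hit by the loop-erased chain and a subarc of $X$ between two consecutive visits to $v$ that exits $B_R(z_0,2\rho)$; this is essentially a statement that the \emph{trace chain} $X^{[V_m,A]}$ has not been sufficiently erased. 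So I would prove (c) first, then deduce (a), (b), (d) from it: a $(\rho,V_m)$-long-jump or a $\rho$-loop can be covered by finitely many $(z_i,\rho/4)$-long-jumps/loops using a fixed finite $\rho/4$-net of the compact set $\mathrm{cl}(U)$, and then a union bound over the net reduces (b) and (d) to (a) and (c) respectively; and a $(z_0,\rho,V_m)$-long-jump, once loops are erased, either persists as genuine macroscopic motion avoiding $V_m$ — impossible in the limit since $V_*$ is dense — or else closes up into a $(z_0,\rho)$-loop.

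For the core estimate I would argue as follows. Fix $z_0$ and $\rho$ with $B_R(z_0,2\rho)\subset U$. Using Theorem \ref{th1} and the strong Markov decomposition of Lemma \ref{lemma28}, express $\mfL_{V_m}(X|_{[0,\tau_A]})$ through the loop erasure of the trace chain $X^{[V_m,A]}$ on $V_m$ together with the bridging excursions $\bm v_i$ between consecutive loop-erased points. A $(z_0,\rho)$-loop in the final path must live inside one such bridging excursion $\bm v_i$ starting and ending at some $v\in V_m$ near $z_0$; but a bridging excursion from $v$ to the next loop-erased point $v'$ conditioned to avoid the already-erased portion has, by the Green's function representation used throughout Section 2 and by Lemma \ref{lemma35}, probability at most of order $R(z_0,\cdot)/(R(\cdot,A))$ decaying in $\rho$ of ever reaching $K\setminus B_R(z_0,2\rho)$ and returning to $v$. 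Since there are only boundedly many $v\in V_m\cap B_R(z_0,\rho)$ visited by the loop-erased chain (the loop-erased path is self-avoiding), a union bound over these, uniform in $m$, gives a bound tending to $0$ as $\rho$ is fixed and $m\to\infty$ — actually one wants the bound to hold uniformly in $m$ for fixed $\rho$, which is where properness and the resistance estimate do the work, since $R(v,A)$ is bounded below on the compact set $\mathrm{cl}(B_R(z_0,\rho))$.

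I expect the main obstacle to be the bookkeeping that translates a $(z_0,\rho)$-loop \emph{in the PLE-refined path} $\mfL_{V_m}\circ\mfL_{V_m,V_{m-1}}\circ\cdots\circ\mfL_{V_2,V_1}(X|_{[0,\tau_A]})$ back into a controllable event about a single excursion of the underlying diffusion $X$. The refinement sequence means the path is built in stages, and a macroscopic loop could in principle be assembled from pieces contributed at different refinement levels; I would handle this by using Proposition \ref{prop43} to collapse the refined path back to $\mfL_{V_m}(X|_{[0,\tau_A]})$ in distribution, so that it suffices to analyze the single operator $\mfL_{V_m}$ applied to $X|_{[0,\tau_A]}$, and then invoke the Lemma \ref{lemma28}-style excursion decomposition of the trace chain. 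A secondary technical point is the passage from ``$\mathbb{P}_x$ of a long-jump is small'' to the same for the limit measure $\nu^*_{x,A}$, which follows from Corollary \ref{coro45} together with the fact that $\mathscr{J}(\rho,V_m)$ and $\mathscr{L}(\rho)$ are (essentially) open conditions in the Hausdorff topology, so that the estimates transfer to the weak limit; I would state this carefully but not belabor it, since it is the routine ``closed/open set'' half of the portmanteau theorem.
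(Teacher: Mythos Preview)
Your argument for (c) has two genuine gaps, and your proposed order of deduction is inverted relative to what actually works.

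\textbf{The loop does not live in a single bridging excursion.} You claim that a $(z_0,\rho)$-loop in $\mfL_{V_m}(X|_{[0,\tau_A]})$ ``must live inside one such bridging excursion $\bm v_i$ starting and ending at some $v\in V_m$''. This is false. The loop-erased continuous path visits each $V_m$-point at most once, so the base point $z=\gamma(s_1)=\gamma(s_2)\in B_R(z_0,\rho)$ of any loop is \emph{not} a $V_m$-point. The subarc $\gamma|_{[s_1,s_2]}$ then typically crosses many $V_m$-points and therefore spans several bridging segments; the loop is assembled from pieces of distinct excursions, not confined to one. Your single-excursion estimate via Lemma \ref{lemma35} controls the wrong event.

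\textbf{The union bound is not uniform in $m$.} You write ``there are only boundedly many $v\in V_m\cap B_R(z_0,\rho)$ visited by the loop-erased chain (the loop-erased path is self-avoiding), a union bound over these, uniform in $m$''. Self-avoidance gives each point multiplicity one, but the number of $V_m$-points in the ball is unbounded as $m\to\infty$, and nothing in your argument bounds how many of them the loop-erased chain actually visits.

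What the paper does instead is the Schramm mechanism: first prove (a) directly (a long-jump in $\mfL_{V_m}(X|_{[0,\tau_A]})$ is automatically a long-jump in $X|_{[0,\tau_A]}$ itself, since a segment avoiding $V_m$ lies inside one raw excursion; then density of $V_m$ and Lemma \ref{lemma35} finish it), deduce (b) by a finite cover, and only then attack (c). The $m$-independent counting parameter is not $\#(V_m\cap B_1)$ but the number $\iota$ of crossings the \emph{diffusion} makes of the annulus $B_R(z_0,2\rho)\setminus B_R(z_0,\rho)$, which has finite expectation by Lemma \ref{lemma36}. One then shows that creating a new loop at the $i$-th crossing forces $X|_{[s_i,t_i]}$ to hit the previously erased path at a non-$V_m$ point while missing the $V_m$-points that (by (a)/(b)) lie arbitrarily close to it --- an event whose probability is small by Lemma \ref{lemma35}. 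Your proposal to deduce (a) from (c) goes the wrong way; the paper needs (a)/(b) as input to make the Schramm argument for (c) run.

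Your reductions of (b) to (a) and (d) to (c) via a finite net are fine in spirit; the paper does essentially the same thing, with a small extra care near $\partial U$.
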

\begin{proof}
	For short, we let $B_1=B_R(z_0,\rho)$ and $B_2=B_R(z_0,2\rho)$. Also, define $s_1=\inf\{0\leq t\leq \tau_{A}:X_t\in B_1\}$, and let $t_1=\inf\{s_1\leq t\leq \tau_{A}:X_t\notin B_2\}$, and inductively $s_i=\inf\{t_{j-1}\leq t\leq \tau_{A}:X_t\in B_1\}$ and $t_i=\inf\{s_i\leq t\leq \tau_{A}:X_t\notin B_2\}$ for $i>1$, where we still let $\inf\emptyset=\infty$. Let $\iota=\max\{i\geq 1:s_i<\infty\}$ if $\{i\geq 1:s_i<\infty\}\neq \emptyset$, and otherwise let $\iota=0$. 
	
	Noticing that $\inf_{y\in \text{cl}(B_1)}\mathbb{E}_y(\tau_{K\setminus B_2})>0$ by Lemma \ref{lemma36}, we have 
	\begin{align*}
		R(x,A)\mu(U)&\geq \mathbb{E}_x(\tau_A)\geq \mathbb{E}_x\big(\sum_{i=1}^{\iota} (t_i-s_i)\big)=\sum_{i=1}^\infty\mathbb{E}_x(t_i-s_i|\iota\geq i)\mathbb{P}_x(\iota\geq i)
		\\&\geq \inf_{y\in \text{cl}(B_1)}\mathbb{E}_y(\tau_{K\setminus B_2})\cdot \sum_{i=1}^\infty\mathbb{P}_x(\iota\geq i)=\inf_{y\in \text{cl}(B_1)}\mathbb{E}_y(\tau_{K\setminus B_2})\cdot\mathbb{E}_x(\iota).
	\end{align*}
	Hence $\mathbb{E}_x(\iota)<\infty$.

	(a). Let's first show that $\mathbb{P}_y\big(X|_{[0,\tau_{K\setminus B_2}]}\in\mathscr{J}(z_0,\rho,V_m)\big)\to 0$ uniformly for $y\in \text{cl}(B_1)$. For a fixed $\varepsilon>0$, by Lemma \ref{lemma35}, we can choose $A'=B_R(K\setminus B_2,\delta)$ for some $\delta$ small enough so that $\mathbb{P}_z(\tau_{K\setminus B_2}<\tau_{B_1})>1-\epsilon$ for any $z\in \partial A'$. In addition, for $m$ large enough, we also have  $\mathbb{P}_z(\tau_{V_m}<\tau_{K\setminus B_2})>1-\epsilon$ for any $z\in \partial A'$ by Lemma \ref{lemma35}. Hence, for large enough $m$, 
	\begin{align*}
		\mathbb{P}_y\big(X|_{[0,\tau_{K\setminus B_2}]}\notin\mathscr{J}(z_0,\rho,V_m)\big)
		\geq&\mathbb{P}_y\big(\tau_{V_m}\circ \theta_{\tau_{A'}}<\tau_{K\setminus B_2}\circ\theta_{\tau_{A'}}<\tau_{B_1}\circ \theta_{\tau_{A'}}\big)\\
		> &1-2\epsilon
	\end{align*}
    holds for any $y\in \text{cl}(B_1)$. Finally, one can see that 
	\begin{align*}
		&\mathbb{P}_x\big(\mfL_{V_m}(X|_{[0,\tau_A]})\in\mathscr{J}(z_0,\rho,V_m)\big)
		\leq \mathbb{P}_x\big(X|_{[0,\tau_A]}\in\mathscr{J}(z_0,\rho,V_m)\big)\\
	   =&\mathbb{P}_x\big(X|_{[s_i,t_i]}\in\mathscr{J}(z_0,\rho,V_m)\text{ for some }1\leq i\leq \iota\big)\\
   \leq &\sum_{i=1}^\infty \mathbb{P}_x\big(X|_{[s_i,t_i]}\in\mathscr{J}(z_0,\rho,V_m)|\iota\geq i\big)\cdot\mathbb{P}_x(\iota\geq i)\\
   \leq &\mathbb{E}_x(\iota)\sup_{y\in \text{cl}(B_1)}\mathbb{P}_y\big(X|_{[0,\tau_{K\setminus B_2}]}\in\mathscr{J}(z_0,\rho,V_m)\big)
	\end{align*}
	converges to $0$ as $m\to 0$ since $\mathbb{E}_x(\iota)<\infty$. 
	
	(b). Fix an arbitrary $\varepsilon>0$, then we choose small enough $\delta<1/3$ and $U'$ of the form $U'=\bigcup_{j=1}^MB_R(z_j,\delta\rho)$, such that 
	\[
	\mathbb{P}_y(\tau_A<\tau_{\partial B_R(y,\rho/2)})>1-\varepsilon,\qquad\forall y\in \partial U',
	\]
	and $\bigcup_{i=1}^MB_R(z_j,2\delta\rho)\subset U$. This is feasible by Lemma \ref{lemma35}. Then one can see that 
	\begin{align*}
	\mathbb{P}_x\big(\mfL_{V_m}(X|_{[0,\tau_A]})\in \mathscr{J}(\rho,V_m)\big)
	\leq&\sum_{j=1}^M\mathbb{P}_x\big(\mfL_{V_m}(X|_{[0,\tau_A]})\in\mathscr{J}(z_j,\frac{\rho}{3},V_m)\big)\\
	&+\mathbb{P}_x(\tau_A\circ\theta_{\tau_{\partial U'}}>\tau_{\partial B_R(X_{\tau_{\partial U'}},\frac{\rho}{2})}\circ \theta_{\tau_{\partial U'}})\\
	\leq&\sum_{j=1}^M\mathbb{P}_x\big(\mfL_{V_m}(X|_{[0,\tau_A]})\in\mathscr{J}(z_j,\delta\rho,V_m)\big)+\varepsilon,
	\end{align*}
	which is smaller than $2\varepsilon$ when $m$ is large enough by (a).
	
	(c). In fact, by a same argument as in (b), one also have $\lim\limits_{m\to\infty}\mathbb{P}_x\big(X|_{[0,\tau_A]}\in\mathscr{J}(\rho,V_m)\big)=0$. The proof of (c) now is very similar to that of \cite{S} Lemma 3.4. We let $\mathscr{L}_{i,m}$ denote the event that $\mfL_{V_m}(X|_{[0,t_i]})\in \mathscr{L}(z_0,\rho)$, and $\neg \mathscr{L}_{i,m}$ is the event that $\mfL_{V_m}(X|_{[0,t_i]})\notin \mathscr{L}(z_0,\rho)$. One can see that for each $i\geq 2$, 
	\[
	\lim\limits_{m\to\infty}\mathbb{P}(\mathscr{L}_{i,m},\neg \mathscr{L}_{i-1,m},\iota\geq i)=0.
	\]
	In fact, $(\neg\mathscr{L}_{i-1,m})\cap\mathscr{L}_{i,m}$ implies that $X|_{[s_i,t_i]}$ hits one of the components, denoted by $\Im\gamma'$ for convenience, of $\Im\mfL_{V_m}(X|_{[0,t_{i-1}]})\cap B_2$ that is connected to $B_1$, while $X|_{[s_i,t_i]}$ does not hit the $V_m\cap \Im\gamma'$. On the other hand, as pointed out at the beginning, for any small $\varepsilon$, when $m$ is large enough, we can ignore the case that $\mfL_{V_m}(X|_{[0,t_{i-1}]})$ has a $(\varepsilon,V_m)$-long-jump, whose probability is very small. Hence, $X|_{[s_i,t_i]}$ hits to within distance $\varepsilon$ to $\Im\gamma'\cap V_m$, but does not hit $\Im\gamma'\cap V_m$, whose probability is small. Also noticing that there are at most $i-1$ such components, we see the claim holds. Hence,
	\begin{align*}
		\mathbb{P}_x\big(\mfL_{V_m}(X|_{[0,\tau_A]})\in\mathscr{L}(z_0,\rho,\varepsilon)\big)
		&\leq \sum_{i=2}^M\mathbb{P}(\mathscr{L}_{i,m},\neg\mathscr{L}_{i-1,m},\iota\geq i)+\mbP_x(\iota>M)\\
		&\leq \sum_{i=2}^M\mathbb{P}(\mathscr{L}_{i,m},\neg \mathscr{L}_{i-1,m},\iota\geq i)+\mathbb{E}_x(\iota)/M.
	\end{align*}
	Let $M\to\infty$ slowly as $m\to\infty$, one then see (c) holds. 
	
	(d). Fix an arbitrary $\varepsilon>0$, and choose small enough $\delta<1/3$ and $U'$ of the form $U'=\bigcup_{i=1}^MB_R(z_j,\delta\rho)$, such that 
	\[
	\mathbb{P}_y(\tau_A\leq\tau_{\partial B_R(y,\rho/2)})\geq 1-\varepsilon,\qquad\forall y\in \partial U',
	\]
	and $\bigcup_{i=1}^MB_R(z_j,2\delta\rho)\subset U$. Then one can see that 
	\begin{align*}
		\mathbb{P}_x\big(\mfL_{V_m}(X|_{[0,\tau_A]})\in \mathscr{L}(\rho)\big)\leq& \sum_{j=1}^M\mathbb{P}_x\big(\mfL_{V_m}(X|_{[0,\tau_A]})\in\mathscr{L}(z_j,\rho/3)\big)\\
		&+\mathbb{P}_x(\tau_A\circ\theta_{\tau_{\partial U'}}>\tau_{\partial B_R(X_{\tau_{\partial U'}},\rho/2)}\circ \theta_{\tau_{\partial U'}})\\
		\leq &\sum_{j=1}^M\mathbb{P}_x\big(\mfL_{V_m}(X|_{[0,\tau_A]})\in\mathscr{L}(z_j,\delta\rho)\big)+\varepsilon,
	\end{align*}
	which is smaller than $2\varepsilon$ when $m$ is large enough by (c).
\end{proof}

We almost arrive at Theorem \ref{th2}, except that we haven't seen whether the limit depends on the sequence.

\begin{corollary}\label{coro47}
	Assuming all the settings of Corollary \ref{coro45}, we have 
	\[\mathbb{P}_x\big(\Im\mfL_{V_m}(X^{[V_m,A]}|_{[0,\tau_A]})\in \bullet\big)\Rightarrow \nu^*_{x,A}, \text{ as }m\to\infty,\]
	and $\nu^*_{x,A}$ supports on the set of simple curves. 
\end{corollary}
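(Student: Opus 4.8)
We have to prove two assertions: that $\mathbb{P}_x\big(\Im\mfL_{V_m}(X^{[V_m,A]}|_{[0,\xi]})\in\bullet\big)\Rightarrow\nu^*_{x,A}$, and that $\nu^*_{x,A}$ is carried by images of simple curves connecting $x$ and $A$. The plan is to deduce the first from Corollary \ref{coro45} and the long-jump/loop estimates of Lemma \ref{lemma46}, and to deduce the second from the monotonicity of the sets $K_m:=\Im(\mfL_{V_m}\circ\mfL_{V_m,V_{m-1}}\circ\cdots\circ\mfL_{V_2,V_1}(X|_{[0,\tau_A]}))$ recorded in Corollary \ref{coro45}, together with Proposition \ref{prop43} and Lemma \ref{lemma46}(d).

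\emph{Weak convergence.} Since $X^{[V_m,A]}|_{[0,\xi]}$ is a path in $V_m$, the operator $\mfL_{V_m}$ acts on it as ordinary loop erasure, so $\Im\mfL_{V_m}(X^{[V_m,A]}|_{[0,\xi]})$ is precisely the finite set of vertices surviving the loop erasure of the $V_m$-trace of $X$; this set is contained in $\Im\mfL_{V_m}(X|_{[0,\tau_A]})$, which is obtained by concatenating the continuous excursions of $X$ between consecutive surviving vertices (together with the initial arc from $x$ and the final arc to $A$). For a near-reverse inclusion, fix $\rho>0$ and assume $\mfL_{V_m}(X|_{[0,\tau_A]})$ has no $(\rho,V_m)$-long-jump and no $\rho$-loop. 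Then any point $p$ on such an excursion, traced back to the last previous visit of the excursion's initial surviving vertex $v$, is the endpoint of a subarc of $\mfL_{V_m}(X|_{[0,\tau_A]})$ whose interior avoids $V_m$; the absence of a $(\rho,V_m)$-long-jump (and, on the terminal excursion, the absence of a $\rho$-loop, where an intervening return to $v$ is possible) forces $R(v,p)$ to be of order $\rho$. Hence $d_H\big(\Im\mfL_{V_m}(X^{[V_m,A]}|_{[0,\xi]}),\,\Im\mfL_{V_m}(X|_{[0,\tau_A]})\big)=O(\rho)$ on that event, and by Lemma \ref{lemma46}(b),(d) its complement has probability $\to0$. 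Thus the Hausdorff distance between the two random compact sets tends to $0$ in probability, and since $Y_m\Rightarrow Y$ together with $d_H(Y_m,Z_m)\to0$ in probability imply $Z_m\Rightarrow Y$ (check via the Prokhorov metric $\pi$), Corollary \ref{coro45} gives $\mathbb{P}_x\big(\Im\mfL_{V_m}(X^{[V_m,A]}|_{[0,\xi]})\in\bullet\big)\Rightarrow\nu^*_{x,A}$.

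\emph{Support on simple curves.} By Corollary \ref{coro45} the $K_m$ decrease to the connected compact set $K_\infty:=\Im\mfL_{V_*}(X|_{[0,\tau_A]})$, $\nu^*_{x,A}$ is the law of $K_\infty$, and $x\in K_\infty$, $K_\infty\cap A\ne\emptyset$. Each $K_m=\Im\gamma_m$ with $\gamma_m:=\mfL_{V_m}\circ\mfL_{V_m,V_{m-1}}\circ\cdots\circ\mfL_{V_2,V_1}(X|_{[0,\tau_A]})$ a continuous curve, hence a Peano continuum, and $K_\infty\subset K_m$ for all $m$. If $K_\infty$ were not a simple arc, then this failure would already be visible at some rational scale $\rho>0$ and, being inherited by every $K_m$ (as $K_\infty\subset K_m$), would force the parametrization $\gamma_m$ to contain a $\rho'$-loop for a fixed $\rho'=\rho'(\rho)>0$, i.e.\ $\gamma_m\in\mathscr{L}(\rho')$, for all $m$. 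By Proposition \ref{prop43}, $\gamma_m\overset{d}{=}\mfL_{V_m}(X|_{[0,\tau_A]})$, so Lemma \ref{lemma46}(d) gives $\mathbb{P}_x(\gamma_m\in\mathscr{L}(\rho'))\to0$; hence $\mathbb{P}_x(K_\infty\text{ is non-simple at scale }\rho)=0$, and summing over rational $\rho$ shows $K_\infty$ is a.s.\ a simple arc. Thus $\nu^*_{x,A}$ is supported on simple curves connecting $x$ and $A$.

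The routine portions are the excursion bookkeeping and the handling of the initial and terminal arcs in the weak-convergence step. The main obstacle is the geometric content of the last step: making precise that a topological failure of simplicity of the \emph{limit} set $K_\infty$ — in particular the possibility that $K_\infty$ is not even locally connected, although every $K_m$ is a Peano continuum — is detectable at a fixed scale and forces a genuine $\rho'$-loop in the approximating \emph{parametrized} curve $\gamma_m$, together with the soft facts that a connected compact metric set with no loop at any scale is an arc and that the set of arcs is Borel in $(\mathcal{K},d_H)$. This is where the quasi-loop analysis of O.~Schramm \cite{S} is adapted to the present setting; it simplifies here because one erases loops directly from the continuous, pointwise-recurrent sample paths rather than from lattice walks.
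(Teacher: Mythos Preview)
Your weak-convergence argument is correct and follows the paper's approach: bound the Hausdorff distance between the discrete loop erasure $\Im\mfL(X^{[V_m,A]}|_{[0,\xi]})$ and the continuous PLE $\Im\mfL_{V_m}(X|_{[0,\tau_A]})$ via Lemma~\ref{lemma46}, then invoke Corollary~\ref{coro45}. (Your appeal to (d) for the terminal piece is in fact a useful addition: the final segment $\gamma|_{[\tau_{V_m}^{(n_\iota)},\tau_A]}$ may revisit several $V_m$-vertices and loop back to $w_{n_\iota}=w_\xi$, which (b) alone does not control.)

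The simple-curve argument, however, has a genuine gap, which you yourself flag as the ``main obstacle'' without resolving. The inheritance runs the wrong way: from $K_\infty\subset K_m$ one cannot conclude that a topological defect of $K_\infty$ forces a $\rho'$-loop in the \emph{parametrization} $\gamma_m$ of the larger set $K_m$. A subset failing to be an arc says nothing about whether a parametrization of a superset loops. Moreover, the ``soft fact'' that a connected compact metric set with no loop at any scale is an arc is not well-posed---$\rho$-loops are defined for paths, not sets---and is false under the natural reading (dendrites contain no simple closed curve yet need not be arcs).

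What the paper does instead is argue directly via Janiszewski's characterization (Lemma~\ref{lemma47}): a compact connected metric space in which every point other than two prescribed endpoints is a cut point is homeomorphic to $[0,1]$. One fixes $\omega$ in the full-measure set $\widetilde\Omega=\bigcap_k\bigcup_m\bigcap_{j\ge m}\{\gamma_j\notin\mathscr{L}(1/k)\}$ (using Proposition~\ref{prop43} and Lemma~\ref{lemma46}(d)), then takes any interior point $z\in K_\infty(\omega)$. The key step is a separation argument: if $\Im\gamma_m\setminus B_R(z,1/k)$ did \emph{not} separate $X_0$ from $X_{\tau_A}$, then choosing $s_0$ with $\gamma_m(s_0)=z$ and the last exit $s_-$ and first re-entry $s_+$ of $B_R(z,1/k)$ around $s_0$, the two pieces $\gamma_m|_{[0,s_-]}$ and $\gamma_m|_{[s_+,\cdot]}$ would meet, producing a loop of diameter at least $1/(2k)$ in $\gamma_m$. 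Hence for all large $m$ the set $\Im\gamma_m\setminus B_R(z,1/k)$ separates the endpoints, and since $K_\infty\subset\Im\gamma_m$, so does $K_\infty\setminus B_R(z,1/k)$. Letting $k\to\infty$ shows $z$ is a cut point of $K_\infty$, and Janiszewski finishes. This separation step---deducing disconnection of $K_\infty\setminus\{z\}$ from the absence of loops in $\gamma_m$---is precisely the content your sketch leaves unproven.
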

\begin{proof}
	First, by Lemma \ref{lemma46} (b), for any small $\varepsilon>0$, we have 
	\[
	\mathbb{P}_x\Big(d_H\big(\Im\mfL_{V_m}(X^{[V_m,A]}|_{[0,\tau_A]}),\Im\mfL_{V_m}(X|_{[0,\tau_A]})\big)<\varepsilon\Big)>1-\varepsilon.
	\]
	for any large enough $m$. Hence, $\mathbb{P}_x\big(\Im\mfL_{V_m}(X^{[V_m,A]}|_{[0,\tau_A]})\in \bullet\big)\Rightarrow \nu^*_{x,A}$ by Corollary \ref{coro45}. 
	
	It remains to show that $\nu^*_{x,A}$ supports on simple paths. For a strict proof, we use the following topological characterization of arc by Janiszewski \cite{J}.
	
	\begin{lemma}[\cite{J}]\label{lemma47}
		Let $I$ be a compact, connected metric space, and let $o_1,o_2\in I$. If for every $x\in K\setminus\{o_1,o_2\}$ the set $I-\{x\}$ is disconnected, then $I$ is homeomorphic to $[0, 1]$.
	\end{lemma}
	
	For convenience, in the following, we write $\gamma_m=\gamma_m(\omega)=\mfL_{V_m}\circ \mfL_{V_m,V_{m-1}}\circ \cdots\circ\mfL_{V_2,V_1}\big(X(\omega)|_{[0,\tau_A]}\big)$.  Let 
	\[\widetilde{\Omega}=\bigcap_{k=1}^\infty\bigcup_{m=1}^\infty\bigcap_{j=m}^\infty\big\{\omega\in\Omega:\gamma_m(\omega)\notin \mathscr{L}(\frac{1}{k})\big\}.\]
	Then, by Lemma \ref{lemma46} (d), we know $\mathbb{P}_x(\widetilde{\Omega})=1$. For each $\omega\in \widetilde{\Omega}$, we can show that $\Im\mfL_{V_*}\big(X|_{[0,\tau_A]}(\omega)\big)$ is a simple path. 
	
	To see this, let's fix $\omega\in \widetilde{\Omega}$ and $z\in \Im\mfL_{V_*}(X|_{[0,\tau_A]}(\omega))\setminus \{X_0(\omega),X_{\tau_A}(\omega)\}$.
	
	Then, for any large $k\geq 1$, one can see that $\Im\gamma_m(\omega)\setminus B_R(z,1/k)$ is disconnected for $m$ large enough, with $X_0(\omega),X_{\tau_A}(\omega)$ belonging to different components. In fact, if $X_0(\omega),X_{\tau_A}(\omega)$ belong to a same component, we can find a loop of diameter at least $\frac{1}{2k}$. To find the loop, we choose $s_0$ such that $\gamma_m(s_0)=z$, and let $s_-=\sup\{s<s_0:\gamma_m(s)\notin B_R(z,1/k)\},s_+=\inf\{s>s_1:\gamma_m(s)\notin B_R(z,1/k)\}$. Then by the assumption that $X_0(\omega),X_{\tau_A}(\omega)$ belong to a same component of $\Im\gamma_m(\omega)\setminus B_R(z,1/k)$, $\Im(\gamma_m|_{[0,s_-]})\cup \Im(\gamma_m|_{[s_+,s]})$ is connected, where $[0,s]$ is the domain of $\gamma$. So we have $\gamma_m(t_1)=\gamma_m(t_2)$ for some $t_1\leq s_-,t_2\geq s_+$, hence there is a loop in $\gamma_m$ of diameter at least $\frac{1}{2k}$. 
	
	Now since $\Im\mfL_{V_*}\big(X|_{[0,\tau_A]}(\omega)\big)\subset \Im\gamma_m(\omega)$, we can see $\mfI\mfL_{V_*}\big(X|_{[0,\tau_A]}(\omega)\big)\setminus B_R(z,1/k)$ is disconnected with $X_0(\omega),X_{\tau_A}(\omega)$ belonging to different components. Since $k$ is arbitrary, it is not hard to see $\Im\mfL_{V_*}(X|_{[0,\tau_A]}(\omega))\setminus \{z\}$ is disconnected. Hence $\Im\mfL_{V_*}(X|_{[0,\tau_A]}(\omega))$ is a simple path by Lemma \ref{lemma47}. 
\end{proof}

\subsection{Independence of the approximating sequences} It remains to show the limit distribution is independent of the approximating sequence. 

\begin{proof}[Proof of Theorem \ref{th2}]
	Let's fix two approximating sequences of finite subsets of $U$, i.e. 
	\[
	\begin{aligned}
		V_{1,1}\subsetneq V_{1,2}\subsetneq \cdots\subsetneq V_{1,m}\subsetneq \cdots ,\\
		V_{2,1}\subsetneq V_{2,2}\subsetneq \cdots\subsetneq V_{2,m}\subsetneq \cdots,
	\end{aligned}
	\]
	and $V_{1,*}=\bigcup_{m=1}^\infty V_{1,m},\ V_{2,*}=\bigcup_{m=1}^\infty V_{2,m}$ are dense in $U$. By Corollary \ref{coro45} and \ref{coro47}, we know there exist $\nu^{1,*}_{x,A}$ and $\nu^{2,*}_{x,A}$ such that for $i=1,2$,
	\begin{align*}
		\mbP_x\big(\Im\mfL_{V_{i,m}}(X^{[V_{i,m}]}|_{[0,\xi]})\in \bullet\big)\Rightarrow \nu^{i,*}_{x,A},\\
		\nu^{i,m}_{x,A}:=\mbP_x\big(\Im\mfL_{V_{i,m}}(X|_{[0,\tau_A]})\in \bullet\big)\Rightarrow \nu^{i,*}_{x,A}.
	\end{align*}
	
	Now we consider a third sequence $V_{3,m},m\geq 1$ defined by $V_{3,m}=V_{1,m}\cup V_{2,m}$, and we write $\nu^{3,m}_{x,A}:=\mbP_x\big(\Im\mfL_{V_{3,m}}(X|_{[0,\tau_A]})\big)$ for any $m\geq 1$. Then, for any $\varepsilon$, by Lemma \ref{lemma46} (d), for any $m$ large enough, we have 
	\[\mathbb{P}_x\big(\mfL_{V_{i,m}}(X|_{[0,\tau_A]})\in \mathscr{L}(\varepsilon)\big)<\varepsilon,\qquad\text{ for } i=1,2.\] 
	As a consequence, for $i=1,2$,
	\[\mathbb{P}_x\Big(d_H\big(\Im\mfL_{V_{i,m}}(X|_{[0,\tau_A]}),\Im\mfL_{V_{3,m}}\circ \mfL_{V_{3,m},V_{i,m}}\circ\mfL_{V_{i,m}}(X|_{[0,\tau_A]})\big)>\varepsilon\Big)<\varepsilon.\]
	Noticing that $\mfL_{V_{3,m}}\circ \mfL_{V_{3,m},V_{i,m}}\circ\mfL_{V_{i,m}}(X|_{[0,\tau_A]})=\mfL_{V_{3,m}}\circ \mfL_{V_{3,m},V_{i,m}}(X|_{[0,\tau_A]})$ has the law $\nu^{3,m}_{x,A}$ for $i=1,2$ by Proposition \ref{prop43}, we have 
	\[\pi(\nu^{1,m}_{x,A},\nu^{3,m}_{x,A})<\varepsilon,\ \pi(\nu^{2,m}_{x,A},\nu^{3,m}_{x,A})<\varepsilon,\]
	hence $\pi(\nu^{1,m}_{x,A},\nu^{2,m}_{x,A})<2\varepsilon$ for any large $m$. This implies $\nu^{1,*}_{x,A}=\nu^{2,*}_{x,A}$. 
\end{proof}

Finally, at the end of this section, we remark that in the celebrated work \cite{LSW}, which shows that the scaling limit of the LERW on a domain of $\mathbb{C}$ exists and is conformal invariant. There, they considered a slightly stronger metric over the space of unparameterized paths: 

\noindent\emph{\textbf{A metric over unparameterized paths}.} consider the metric $d_p(\gamma,\gamma')=\inf \sup_{t\in [0,1]}|\hat{\gamma}(t)-\hat{\gamma}'(t)|$,  where the infimum is over all choices of parameterizations $\hat{\gamma}$ and $\hat{\gamma}'$ in $[0,1]$ of $\gamma$ and $\gamma'$.\vspace{0.15cm}

Since we are erasing loops from $X|_{[0,\tau_A]}$, it is obvious that our result holds for this metric, if we consider the convergence $\mfL_{V_m}\circ\mfL_{V_m,V_{m-1}}\circ\cdots\circ\mfL_{V_2,V_1}\big(X|_{[0,\tau_A]}\big)\to \mfL_{V_*}\big(X|_{[0,\tau_A]}\big)$.

\begin{theorem}\label{thm49}
Assuming all the settings of Theorem \ref{th2}, we have 
	\[\mathbb{P}_x\big(\mfL_{V_m}(X|_{[0,\tau_A]})\in\bullet\big)\Rightarrow \nu_{x,A},\]
holds with respect to the metric over unparameterized paths for any $V_1\subsetneq V_2\subsetneq\cdots$ such that $V_*=\bigcup_{m=1}^\infty V_m$ is dense in $U$.
\end{theorem}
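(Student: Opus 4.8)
The plan is to upgrade the Hausdorff-distance convergence already established in Theorem \ref{th2} to convergence in the stronger metric $d_p$ over unparameterized paths, by working not with the images but with the parameterized paths $\gamma_m := \mfL_{V_m}\circ\mfL_{V_m,V_{m-1}}\circ\cdots\circ\mfL_{V_2,V_1}(X|_{[0,\tau_A]})$ directly. The key observation is that, along the coupling of Proposition \ref{prop43}, the sequence $\gamma_m$ is obtained from $\gamma_{m-1}$ by applying $\mfL_{V_m}\circ\mfL_{V_m,V_{m-1}}$, i.e. by erasing sub-loops of $\gamma_{m-1}$; in particular $\Im\gamma_m\subset\Im\gamma_{m-1}$, the sequence of images is decreasing, and the traversal order is preserved in the sense that $\gamma_m$ visits the points of $\Im\gamma_{m-1}$ that survive in the same cyclic order. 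So I first want to argue that $(\gamma_m)_{m\ge1}$ is almost surely a Cauchy sequence in $d_p$ and hence converges to a limit path $\mfL_{V_*}(X|_{[0,\tau_A]})$, whose image is exactly $\Im\mfL_{V_*}(X|_{[0,\tau_A]})$ from Definition \ref{def44}.

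The main tool for the Cauchy estimate is Lemma \ref{lemma46}(d): for any $\varepsilon>0$, with probability at least $1-\varepsilon$, for all sufficiently large $m$ the path $\gamma_m$ has no $\varepsilon$-loop, i.e. $\gamma_m\notin\mathscr{L}(\varepsilon)$. First I would show that if $\gamma_{m}\notin\mathscr{L}(\varepsilon)$ and $m'>m$, then $d_p(\gamma_m,\gamma_{m'})\le C\varepsilon$ for an absolute constant $C$: since $\gamma_{m'}$ is obtained from $\gamma_m$ by erasing loops, and $\gamma_m$ has no loop of diameter exceeding $\varepsilon$, every maximal erased sub-arc of $\gamma_m$ has diameter at most $\varepsilon$ (it is a union of loops attached along the retained skeleton and is confined, by absence of large loops, to an $\varepsilon$-neighborhood of its endpoints on $\gamma_{m'}$); one then builds parameterizations on $[0,1]$ that stay within $C\varepsilon$ of each other by reparameterizing so that retained portions are matched and each erased excursion is collapsed. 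This gives the Cauchy property on a probability-$1$ set (taking $\varepsilon=1/k$ and intersecting over $k$), hence $\gamma_m\to\gamma_\infty:=\mfL_{V_*}(X|_{[0,\tau_A]})$ a.s. in $d_p$; since $d_p$-convergence implies $d_H$-convergence of images, the limit has law $\nu_{x,A}$.

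Next I would handle the two remaining points. For the statement itself, a.s. $d_p$-convergence along the coupling implies weak convergence of the laws $\mathbb{P}_x(\mfL_{V_m}(X|_{[0,\tau_A]})\in\bullet)$ to the law of $\gamma_\infty$ in the $d_p$-topology, which is $\nu_{x,A}$. For the \emph{independence of the approximating sequence} in this stronger topology, I would repeat the argument of the proof of Theorem \ref{th2} verbatim with the union sequence $V_{3,m}=V_{1,m}\cup V_{2,m}$, but now using the $d_p$-estimate above in place of the $d_H$-estimate: Lemma \ref{lemma46}(d) applied to $V_{i,m}$ shows $\mfL_{V_{i,m}}(X|_{[0,\tau_A]})\notin\mathscr{L}(\varepsilon)$ with probability $>1-\varepsilon$ for large $m$, and by the loop-erasing estimate $d_p\big(\mfL_{V_{i,m}}(X|_{[0,\tau_A]}),\mfL_{V_{3,m}}\circ\mfL_{V_{3,m},V_{i,m}}(X|_{[0,\tau_A]})\big)\le C\varepsilon$ on that event; since the right-hand paths both have the $d_p$-law $\nu^{3,m}_{x,A}$ by Proposition \ref{prop43}, the Prokhorov distances between the $d_p$-laws of the two sequences are $\le 2C\varepsilon$ for large $m$, forcing the two limits to coincide.

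The step I expect to be the main obstacle is the quantitative loop-erasing estimate: making precise that erasing loops from a path with no $\varepsilon$-loop changes it by at most $O(\varepsilon)$ in $d_p$. One has to be careful that the erased parts of $\gamma_m$ form nested excursions (loops within loops), control the total spatial extent of each maximal excursion purely from the no-large-loop hypothesis on $\gamma_m$ (not on $\gamma_{m'}$), and then exhibit compatible parameterizations — the bookkeeping for the reparameterization, and the reduction of nested loops, is the only genuinely technical part; everything else is a routine repackaging of the already-established Lemma \ref{lemma46} and Proposition \ref{prop43}.
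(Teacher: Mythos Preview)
Your proposal is correct and follows exactly the approach the paper indicates in its one-sentence remark before the theorem: work along the coupled sequence $\gamma_m=\mfL_{V_m}\circ\cdots\circ\mfL_{V_2,V_1}(X|_{[0,\tau_A]})$, use that each $\gamma_{m'}$ with $m'>m$ is obtained from $\gamma_m$ by erasing loops, and invoke Lemma~\ref{lemma46}(d) (transferred to $\gamma_m$ via Proposition~\ref{prop43}, exactly as in the definition of $\widetilde\Omega$ in the proof of Corollary~\ref{coro47}) to force those loops to be small. The obstacle you flag is simpler than you fear: each \emph{maximal} erased sub-arc $\gamma_m|_{[a,b]}$ is itself a loop of $\gamma_m$ (its endpoints $\gamma_m(a),\gamma_m(b)$ both equal the single point $\gamma_{m'}(\phi(a))=\gamma_{m'}(\phi(b))$ under the monotone collapse map $\phi$), so on $\{\gamma_m\notin\mathscr{L}(\varepsilon)\}$ the entire excursion stays in $B_R(\gamma_m(a),\varepsilon)$ and the constant $C=1$ suffices---no nested-loop bookkeeping is needed.
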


\section{Scaling limits of loop-erased random walks on planar Sierpi\'nski carpet graphs}
We conclude the paper with an application on the Sierpi\'nski carpets ($\mathcal{SC}$), the scaling limit of LERW on $\mathcal{SC}$ graphs (Theorem \ref{th3}). As discussed in the introduction, for simplicity, we focus on the simple case that the path starts at some point $x$ and ends at another point $y$. 

Throughout this section, $d$ is the Euclidean metric on $\mathbb{R}^2$; $d_H$ is the Hausdorff metric between compact subsets of $\mathbb{R}^2$; $B_d(x,\rho)$ is a ball centered at $x$ with radius $\rho$ with respect to the Euclidean metric $d$; $\partial A$ is the boundary set of $A$ as a subset of $\mathbb{R}^2$, and $\text{int}(A)=A\setminus \partial A$. 

Let $K$ be a $\mathcal{SC}$. Then, the resistance metric $R$ on $K$ is equivalent to $d$ in the sense that $C_1d^\gamma(x,y)\leq R(x,y)\leq C_2 d^{\gamma}(x,y),\forall x,y\in K$ for some $\gamma>0$ and constants $C_1,C_2$ (see \cite{BB2,BB3,BB4,CQ,KZ}), so there is no worry about which metric we use. \vspace{0.15cm}

\noindent\textbf{\emph{Sierpi\'nski carpets \cite{BB1}.}} Let $H_0=[0,1]^2\subset \mathbb{R}^2$, and let $k\geq 3$ be fixed. Set 
\[
Q=\big\{[(i_1-1)/k,i_1/k]\times [(i_2-1)/k,i_2/k]: 1\leq i_1,i_2\leq k,\  i_1,i_2\in \mathbb{Z}\big\}.
\]
Let $4k-4\leq N<k^2$, and let $\Psi_l,1\leq l\leq N$ be orientation-preserving affine maps of $H_0$ onto some element of $Q$ (we assume that $\Psi_l(H_0)$ are distinct). Then there exists a unique compact nonvoid set $K\subset H_0$ such that $K=\bigcup_{l=1}^N \Psi_l(K)$. $K$ is called a (planar) Sierpi\'nski carpet if the following holds for $H_1=\bigcup_{l=1}^N\Psi_l(H_0)$:   

\noindent\textbf{(Symmetry)}: $H_1$ is preserved by all the isometries of the unit square $H_0$. 

\noindent\textbf{(Connected)}: $H_1$ is connected.

\noindent\textbf{(Nondiagonality)}: Let $B$ be a cube in $H_0$ that is the union of $4$ distinct elements of $Q$ (So $B$ has side length $2/k$). Then if $\text{int}(H_1\cap B)$ is non-empty, it is connected.

\noindent\textbf{(Borders included)}: $H_1$ contains $\partial H_0$. 

\noindent See Figure \ref{SC} for the standard $\mathcal{SC}$ ($k=3,N=8$).

\begin{figure}[htp]
	\includegraphics[width=3.8cm]{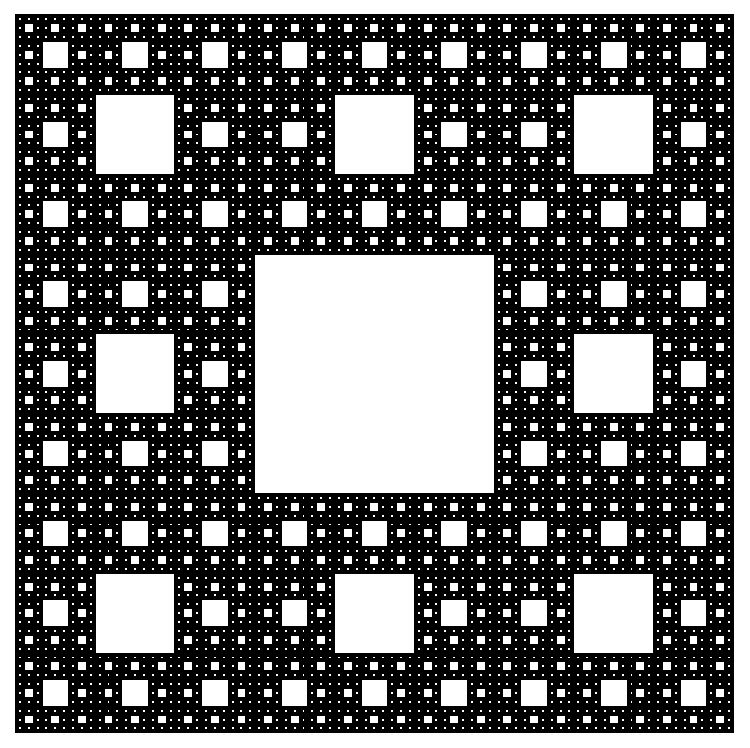}
	\caption{The standard Sierpi\'nski carpet.}\label{SC}
\end{figure}

The main difference from the Sierpi\'nski gasket is that $\mathcal{SC}$ are
infinitely ramified, i.e., $K$ cannot be disconnected by removing a finite number of
points. Hence, unlike on the Sierpi\'nski gasket, where the trace of the diffusion is the simple random walk, on a $\mathcal{SC}$, the trace of the diffusion can be complicated. Fortunately, the diffusions on $\mathcal{SC}$, along with the Markov processes on approximating domains or graphs, have been deeply studied over the past thirty years \cite{BB1,BB2,BB3,BB4,BB5,BBKT,BCK,CQ,HK,KZ}, so we have enough tools to show the existence of the scaling limits of LERW on $\mathcal{SC}$ graphs. \vspace{0.15cm}

\noindent\textbf{\emph{Sierpi\'nski carpet graphs ($\mathcal{SC}$ graphs)}.} For convenience, we let $\Theta_m=\big\{\theta=\theta_1\theta_2\cdots\theta_m:\theta_l\in\{1,2,\cdots,N\},\forall 1\leq l\leq m\big\}$ be the set of words of length $m$, and we write $\Psi_\theta=\Psi_{\theta_1}\circ\Psi_{\theta_2}\circ\cdots\circ\Psi_{\theta_n}$ for each $\theta=\theta_1\theta_2\cdots \theta_m\in \Theta_m$. Also set $\Theta_0=\emptyset$ and let $\Psi_\emptyset$ be the identity map. Let $V_0=\{(i,j):i,j=0,1\}$ be the four vertices of the unit square $H_0$. Then, for $m\geq 0$, we define the $\mathcal{SC}$ graphs as the graphs $G_m=(V_m,E_m)$, where   
\[
\begin{cases}
V_m=\bigcup_{\theta\in \Theta_m}\Psi_\theta(V_0),\\
E_m=\big\{\{x,y\}\in V_m^2:d(x,y)=k^{-m}\big\}. 
\end{cases}
\] 
See Figure \ref{SCgraphs} for the $\mathcal{SC}$ graphs associated with the standard $\mathcal{SC}$. 

\begin{figure}[htp]
	\includegraphics[width=3.5cm]{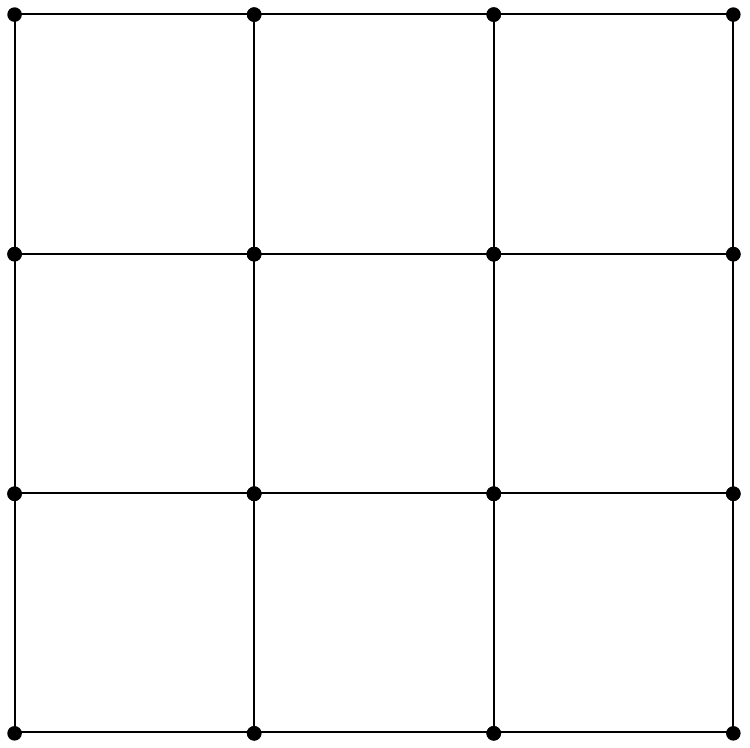}\ 
	\includegraphics[width=3.5cm]{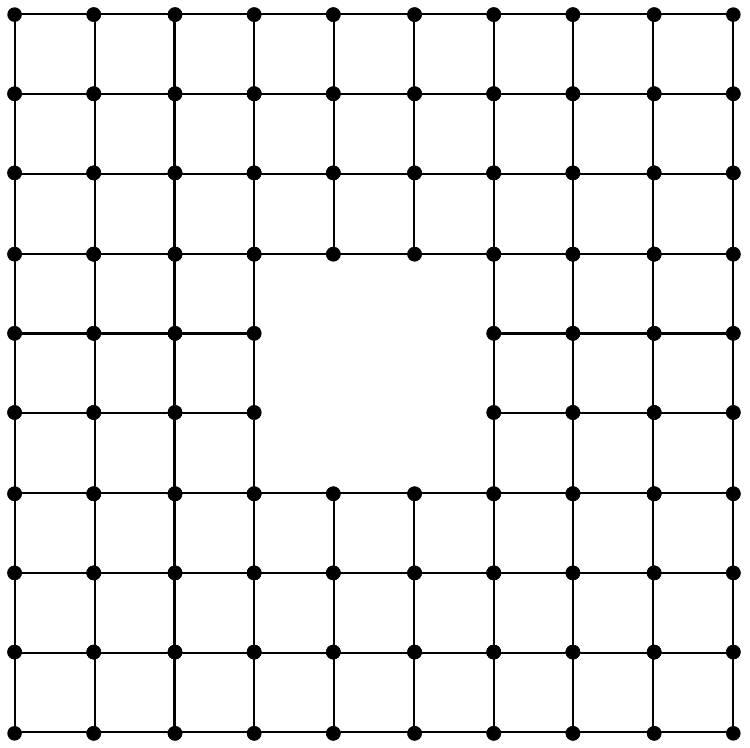}\
	\includegraphics[width=3.5cm]{SC3.pdf}
	\caption{$\mathcal{SC}$ graphs $G_1,G_2,G_3$.}\label{SCgraphs}
\end{figure}

\noindent\textbf{\emph{Resistance forms on $\mathcal{SC}$ graphs}.} On the $\mathcal{SC}$ graph $G_m$, one naturally considers the (discrete) resistance form 
\[\mcD_m(f,f)=\sum_{\{x,y\}\in E_m}\big(f(x)-f(y)\big)^2,\quad\forall f\in l(V_m),\]
which induces the simple random walk on $G_m$. We write $R_m$ for the resistance metric associated with the form $\big(\mcD_m,l(V_m)\big)$. For convenience, we also simply call $\mcD_m$ the resistance form on $G_m$. \vspace{0.15cm}

The resistance estimates have been playing a central role in the study of the diffusions on $\mathcal{SC}$. There are now two ways of proving the theorem: the first approach was developed by Barlow and Bass, which involves a face to face resistance estimate \cite{BB2}, and a proof of Harnack inequalities \cite{BB1,BB4}; the second approach was initiated by Kusuoka and Zhou \cite{KZ}, who introduced a study of Poincar\'e constants, and the last piece of the story, which makes the method fully analytic, was recently filled by the author and Qiu \cite{CQ}. Both methods have its own advantage: the method of Barlow-Bass can be extended to higher dimensional (generalized) $\mathcal{SC}$ \cite{BB4}, while the method of Kusuoka-Zhou achieved success on some irrationally ramified fractals \cite{CQ}. 

The above papers were not written for the $\mathcal{SC}$ graphs (although the methods there can be easily adjusted for $\mathcal{SC}$ graphs), see \cite{BB5,BCK} for a study of $\mathcal{SC}$ graphs.   

\begin{theorem}[\cite{BB5,BCK}]\label{thm51}
	Let $K$ be a $\mathcal{SC}$ and $G_m=(V_m,E_m),m\geq 1$ be the corresponding $\mathcal{SC}$ graphs. Then there exists a unique positive index $\gamma$ and some positive constants $C_1,C_2$ independent of $m$ such that 
	\[
	C_1\cdot\rho^\gamma\leq k^{-m\gamma}\cdot R_m\big(x,V_m\setminus B_d(x,\rho)\big)\leq k^{-m\gamma}\cdot R_m(x,y)\leq C_2\cdot\rho^\gamma,
	\]
	for any $x\in V_m$, any $\rho\geq k^{-m}$ such that $V_m\setminus B_d(x,\rho)\neq \emptyset$, and any $y\in V_m$ such that $d(x,y)=\rho$.   
\end{theorem}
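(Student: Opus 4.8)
The strategy I would follow is the one behind the resistance estimates of \cite{BB1,BB2,BB4,KZ,CQ}, transplanted to the graphs $G_m$ as in \cite{BB5,BCK}: reduce every resistance appearing in the statement to a single \emph{cross resistance} of the carpet graph, and control that by a self-similar sub/super-multiplicativity argument.

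\emph{Step 1: the cross resistance and the exponent.} For $n\ge 0$ let $\bar R_n$ denote the effective resistance of $(G_n,\mcD_n)$ between the left side $\{0\}\times[0,1]$ and the right side $\{1\}\times[0,1]$ of $H_0$, each side collapsed to one vertex. Since $K=\bigcup_{\theta\in\Theta_m}\Psi_\theta(K)$, the graph $G_{n+m}$ is obtained from $G_m$ by replacing each $k^{-m}$-cell $\Psi_\theta(H_0)$ with a rescaled copy of $G_n$. Building a unit left-to-right flow on $G_{n+m}$ out of a near-optimal flow on $G_m$ and near-optimal flows inside the cells (using (Connected) and (Borders included) to route the flow) gives $\bar R_{n+m}\le C_0\,\bar R_n\bar R_m$; a matching lower bound $\bar R_{n+m}\ge c_0\,\bar R_n\bar R_m$ comes from a Poincar\'e / Nash--Williams estimate inside the cells, the constants depending only on $k,N$ and on (Symmetry) and (Nondiagonality). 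Applying Fekete's lemma to $\log(C_0\bar R_n)$ and $\log(c_0\bar R_n)$ yields one constant $\lambda=\lambda(K)$ with $c\,\lambda^{n}\le\bar R_n\le C\,\lambda^{n}$ for all $n$; one has $\lambda>1$ since $N<k^2$. Set $\gamma:=\log_k\lambda$. Since $\gamma=\lim_{n\to\infty}(\log\bar R_n)/(n\log k)$ is pinned by the sequence $\bar R_n$, it is unique.

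\emph{Step 2: point-to-set resistance via an elliptic Harnack inequality.} Fix $x\in V_m$ and $\rho\ge k^{-m}$, and write $\rho\asymp k^{-j}$ with $0\le j\le m$. The claim is $R_m\big(x,V_m\setminus B_d(x,\rho)\big)\asymp\bar R_{m-j}$, with constants free of $m,j,x$. For the upper bound, $x$ lies in a bounded connected union $W$ of $k^{-j}$-cells, each carrying an embedded rescaled copy of $G_{m-j}$, such that $\partial W$ is at distance $\gtrsim k^{-j}$ from $x$ and separates $x$ from $V_m\setminus B_d(x,\rho)$; an \emph{elliptic Harnack inequality} for $\mcD_m$-harmonic functions bounds the resistance from $x$ to $\partial W$ by $C\bar R_{m-j}$. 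The lower bound rests on the same Harnack inequality, in the form that any two macroscopically separated subsets of a $k^{-j}$-cell have resistance $\ge c\bar R_{m-j}$. Since $\bar R_{m-j}\asymp\lambda^{m-j}=k^{\gamma m}k^{-\gamma j}\asymp k^{\gamma m}\rho^{\gamma}$, multiplying by $k^{-m\gamma}$ gives the first inequality of the theorem. The middle inequality $R_m\big(x,V_m\setminus B_d(x,\rho)\big)\le R_m(x,y)$ is immediate from Rayleigh monotonicity, since $d(x,y)=\rho$ forces $y\in V_m\setminus B_d(x,\rho)$.

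\emph{Step 3: the two-point upper bound, and the main obstacle.} For $R_m(x,y)\le C_2 k^{m\gamma}\rho^{\gamma}$ with $d(x,y)=\rho$, one joins $x$ to $y$ by a chain of $O(1)$ carpet cells of side $\asymp\rho$ (possible because $K$ is connected and $d(x,y)=\rho$), bounds the resistance between the entry and exit vertices of each cell by $C\bar R_{m-j}\asymp k^{m\gamma}\rho^{\gamma}$ using the comparison of Step 2, and adds the $O(1)$ contributions; together with Steps 1 and 2 this completes the proof. The genuine difficulty is the elliptic Harnack inequality invoked in Step 2 (and needed to make the constants of Step 1 uniform in $n,m$): unlike the Sierpi\'nski gasket, the carpet is infinitely ramified, so there is no finite cut-set reduction, and one must run either the Barlow--Bass corner- and knight's-move resistance machinery together with a probabilistic Harnack argument \cite{BB1,BB2,BB4}, or the Kusuoka--Zhou / Cao--Qiu Poincar\'e-constant method \cite{KZ,CQ}; the versions for $\mathcal{SC}$ graphs are precisely \cite{BB5,BCK}. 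I would therefore treat this Harnack inequality as a cited black box and write out only the elementary Steps 1 (the flow half) and 3 in detail.
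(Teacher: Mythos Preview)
The paper does not prove this theorem at all: it is stated with the attribution \cite{BB5,BCK} and used as a quoted result from the literature, exactly as the surrounding text indicates (``see \cite{BB5,BCK} for a study of $\mathcal{SC}$ graphs''). There is therefore no proof in the paper to compare your proposal against.

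That said, your outline is a fair summary of the strategy in those references: the sub/super-multiplicativity of the cross resistance giving the exponent $\gamma$ via Fekete, the reduction of point-to-set and two-point resistances to the cross resistance, and the identification of the elliptic Harnack inequality as the non-elementary input. Your candid acknowledgment that the Harnack step is the real work and must be imported from \cite{BB1,BB2,BB4,BB5,BCK,KZ,CQ} is correct and matches how the paper itself treats the matter --- it simply cites the finished product. One small caveat: your claim $\lambda>1$ because $N<k^2$ is not automatic from that inequality alone; in the planar case one does have $\gamma>0$, but the justification is again part of the cited resistance machinery rather than a one-line consequence of $N<k^2$.
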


The following is the celebrated uniqueness theorem \cite{BBKT} by Barlow, Bass, Kumagai and Teplyaev. Here we state a weaker version since we only need to take care of resistance forms (for planar carpets).\vspace{0.1cm}

\noindent\textbf{\emph{Unfolding mapping on $\mathcal{SC}$ \cite{BBKT}}.} Let $K$ be a $\mathcal{SC}$. For each $m\geq 0$ and $\theta\in \Theta_m$, we let the folding map $\widetilde{\Gamma}_\theta:H_0\to \Psi_\theta(H_0)$ be the unique continuous map such that $\widetilde{\Gamma}_\theta|_{\Psi_\theta(H_0)}$ is the identity mapping and $\widetilde{\Gamma}_\theta|_{\Psi_{\theta'}(H_0)}$ is an isometry $\Psi_{\theta'}(H_0)\to \Psi_\theta(H_0)$ for each $\theta'\in \Theta_m$. 

The unfolding mapping $\mathcal{U}_\theta:C(\Psi_\theta (K))\to C(K)$ is defined as $\mathcal{U}_\theta f=\mathcal{U}_\theta(f)=f\circ \widetilde{\Gamma}_\theta$ for each $f\in C(\Psi_\theta (K))$.

\noindent\textbf{\emph{Locally symmetric resistance forms on $\mathcal{SC}$ \cite{BBKT}}.} Let $K$ be a $\mathcal{SC}$, and $(\mcE,\mcF)$ be a local regular resistance form on $K$, and for convenience, we assume $\mcF\subset C(K)$. We say $(\mcE,\mcF)$ is locally symmetric if the following (1),(2),(3) hold for any $m\geq 0$, $\theta\in \Theta_m$:

(1). $\mathcal{U}_\theta(f|_{\Psi_\theta K})\in \mcF$ if $f\in \mcF$. 

(2). $\mcE(\mathcal{U}_\theta f,\mathcal{U}_\theta f)=\mcE\big(\mathcal{U}_{\theta'} (f\circ \Gamma),\mathcal{U}_{\theta'} (f\circ \Gamma)\big)$ for any $f\in C(\Psi_\theta K)$, $\theta'\in \Theta_m$ and $\Gamma$ is an isometry $\Psi_{\theta'}(H_0)\to \Psi_\theta(H_0)$. 

(3). $\mcE(f,f)=N^{-m}\sum_{\theta'\in \Theta_m}\mcE\big(\mathcal{U}_{\theta'}(f|_{\Psi_{\theta'K}}),\mathcal{U}_{\theta'}(f|_{\Psi_{\theta'K}})\big)$ for any $f\in \mcF$. \vspace{0.15cm}

\noindent\textbf{A remark about quadratic forms}. In the above, we admit the setting $\mcE(f,f)=\infty$ if $f\notin \mcF$ for any symmetric bilinear form $(\mcE,\mcF)$. We view $\mcE$ as a quadratic form on $C(K)$ with extended real values induced from $(\mcE,\mcF)$.

Conversely, given a non-negative quadratic form $\mcE$ on $C(A)$, we let $\mcF=\{f\in C(A):\mcE(f,f)<\infty\}$, and hence get an associated bilinear form $(\mcE,\mcF)$. 

To conclude, there is a natural one to one correspondence between non-negative quadratic forms and bilinear forms.\vspace{0.15cm}

The following is the celebrated theorem about the uniqueness of Brownian motions on a $\mathcal{SC}$ \cite{BBKT}. 

\begin{theorem}\cite{BBKT}\label{thm52}
Let $K$ be a $\mathcal{SC}$. Then, there exists a unique (up to scalar multiples) locally symmetric, local, regular resistance form on $K$.
\end{theorem}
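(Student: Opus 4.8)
The plan is to treat existence and uniqueness as two separate problems, with uniqueness being by far the harder half. For \textbf{existence}, I would construct the form as a renormalized limit of the discrete forms $\mcD_m$ on the graphs $G_m$. Each $\mcD_m$ is already invariant under the dihedral symmetry group of $H_0$ and respects the cell structure, so conditions (1)--(2) of local symmetry are inherited by any limit. The two-sided resistance estimates of Theorem~\ref{thm51} show that, after multiplying $\mcD_m$ by the appropriate resistance-scaling factor (governed by the exponent $\gamma$), the crossing resistances across the unit cell stay bounded above and below uniformly in $m$. This uniform control yields equicontinuity of the associated harmonic functions and lets one extract a subsequential limit form $(\mcE,\mcF)$ on $K$. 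The self-similar renormalization relation (3), together with the verification that $(\mcE,\mcF)$ is a genuine local, regular resistance form in the sense of (RF1)--(RF5), then follows from the resistance estimates and the general theory of Theorem~\ref{thm32}, reproducing the constructions of Barlow--Bass, Kusuoka--Zhou and Cao--Qiu.

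For \textbf{uniqueness}, suppose $\mcE_1,\mcE_2$ are two locally symmetric, local, regular resistance forms on $K$. The naive strategy of reducing to a finite-dimensional fixed-point problem for the renormalization map, as on the Sierpi\'nski gasket, fails here because the boundary of a cell $\Psi_\theta(H_0)\cap K$ is a continuum rather than a finite set, so the space of admissible boundary forms is infinite-dimensional and no purely algebraic eigenvalue computation is available. Instead I would follow the coupling strategy of \cite{BBKT}. First, I would use Theorem~\ref{thm51} and the symmetry to establish an elliptic Harnack inequality for $\mcE_i$-harmonic functions, giving uniform H\"older continuity and hence compactness of families of harmonic functions. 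Let $X^{(1)},X^{(2)}$ be the diffusions associated with $\mcE_1,\mcE_2$.

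The heart of the argument is to \emph{couple} $X^{(1)}$ and $X^{(2)}$ using the reflection symmetries of $H_0$. Running the two processes and reflecting one of them across the successive symmetry hyperplanes of the carpet, the \textbf{(Symmetry)} and \textbf{(Nondiagonality)} hypotheses guarantee that the coupled pair can be brought to agree on each dyadic cell before leaving a slightly larger cell, with a success probability bounded below independently of the scale. Iterating this across scales shows that $\mcE_1$- and $\mcE_2$-harmonic functions with the same boundary data must coincide; consequently the harmonic (exit) measures, and then the level-$m$ trace forms, agree up to a single multiplicative constant. A locally symmetric resistance form is determined by its traces on the finite sets $V_m$, and the consistency forced by the renormalization relation (3) makes the proportionality constant the same at every level, so one concludes $\mcE_1=c\,\mcE_2$ for some $c>0$.

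The \textbf{main obstacle} is precisely this coupling step: one must construct a Markovian coupling that respects the self-similar symmetric structure, forces coincidence on a cell with a scale-independent success probability, and controls the infinitely many boundary crossings between adjacent cells. The infinite ramification of $\mathcal{SC}$ is what makes this delicate and is the reason an analytic fixed-point proof is unavailable; the \textbf{(Nondiagonality)} condition is indispensable here, since it keeps the corner regions connected so that the reflected copies of the process can actually merge.
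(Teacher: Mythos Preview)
The paper does not prove this theorem at all: it is stated as a citation of the Barlow--Bass--Kumagai--Teplyaev result \cite{BBKT} and is used as a black box in the proof of Theorem~\ref{thm53}. So there is no ``paper's own proof'' to compare your proposal against.

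That said, your sketch is a reasonable outline of the strategy in \cite{BBKT}, with one point worth flagging. In the uniqueness half you describe coupling the two diffusions $X^{(1)}$ and $X^{(2)}$ associated with $\mcE_1$ and $\mcE_2$ directly to each other. In \cite{BBKT} the reflection coupling is used first on a \emph{single} process to prove an elliptic Harnack inequality (hence uniform heat kernel estimates) for any locally symmetric form; the comparison of the two forms is then carried out analytically. One shows that $\mcE_1$ and $\mcE_2$ are comparable (same domain, $C^{-1}\mcE_2\leq\mcE_1\leq C\mcE_2$), and then that the extremal ratio is actually attained and self-similar, forcing $\mcE_1=c\,\mcE_2$. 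Your version, coupling $X^{(1)}$ with $X^{(2)}$ across the two different Dirichlet structures, would require you to say in what sense a reflection of $X^{(2)}$ is still an $\mcE_1$-diffusion (or vice versa), and that is not automatic; the argument in \cite{BBKT} avoids this by keeping the coupling internal to each form and pushing the comparison to the level of the quadratic forms.
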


By Theorem \ref{thm52}, we have the following result concerning the resistance metrics. We leave the proof to the end of the paper. 

\begin{theorem}\label{thm53}
Let $K$ be a $\mathcal{SC}$. Let $(\mcE,\mcF)$ be the unique locally symmetric, local, regular resistance form on $K$, and let $R$ be the corresponding resistance metric. Then there is a sequence of renormalization constants $c_mk^{-\gamma m}$, where $\gamma$ is the same as in Theorem \ref{thm51} and $c_m,m\geq 1$ satisfies $C^{-1}\leq c_m\leq C,\forall m\geq 1$ for some positive constant $C$ depending only on $K$, so that  
\[
R(x,y)=\lim\limits_{m\to\infty}c_mk^{-\gamma m}\cdot R_m(x_m,y_m), 
\]
for any $x,y\in K$ and $x_m,y_m\in V_m$ such that $x_m\to x,y_m\to y$. 
\end{theorem}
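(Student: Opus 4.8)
The strategy is to realize $R$ as a subsequential scaling limit of the graph resistance metrics, to pin the limit down using the uniqueness Theorem~\ref{thm52}, and then to upgrade subsequential convergence to full convergence by a soft equicontinuity argument. First I fix two distinct corners $q_1,q_2\in V_0$ (note $V_0\subset V_m$ for all $m$) and \emph{define} the normalization by $c_m:=R(q_1,q_2)\big/\bigl(k^{-\gamma m}R_m(q_1,q_2)\bigr)$, so that $c_mk^{-\gamma m}R_m(q_1,q_2)=R(q_1,q_2)$ for every $m$. Applying Theorem~\ref{thm51} with $\rho=d(q_1,q_2)$ gives $C^{-1}\le c_m\le C$ for a constant $C=C(K)$, so this is an admissible choice of constants; applying it again with $\rho=d(x,y)$ gives $(C')^{-1}d(x,y)^\gamma\le c_mk^{-\gamma m}R_m(x,y)\le C'd(x,y)^\gamma$ for all $x,y\in V_m$, uniformly in $m$. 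By the triangle inequality for the metric $c_mk^{-\gamma m}R_m$, this yields the uniform modulus of continuity
\[
\bigl|c_mk^{-\gamma m}R_m(x,y)-c_mk^{-\gamma m}R_m(x',y')\bigr|\le C'\bigl(d(x,x')^\gamma+d(y,y')^\gamma\bigr),\qquad x,x',y,y'\in V_m,
\]
which is the only ingredient needed at the end to pass from arbitrary $x_m\to x$ to points of $V_*=\bigcup_m V_m$.

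Next I extract a subsequential limit. By the resistance estimates of Theorem~\ref{thm51} the renormalized graph forms $c_m^{-1}k^{\gamma m}\mcD_m$ are equibounded in the usual sense: for each finite $W\subset V_*$ the $\mcD_m$-energy of the $\mcD_m$-harmonic extension to $V_m$ (for $m$ large) of a prescribed function on $W$ is, after the scaling $c_m^{-1}k^{\gamma m}$, bounded above and below uniformly in $m$. Diagonalizing over a countable exhausting family of such sets, I obtain a subsequence $(m_j)$ and a resistance form $(\widetilde{\mcE},\widetilde{\mcF})$ on $K$ which is the limit of $c_{m_j}^{-1}k^{\gamma m_j}\mcD_{m_j}$ and whose resistance metric $\widetilde R$ is the pointwise limit of $c_{m_j}k^{-\gamma m_j}R_{m_j}$ on $V_*\times V_*$; since $\widetilde R\asymp d^\gamma$ by the estimates above, $\widetilde R$ extends to a metric on $K=\mathrm{cl}(V_*)$ comparable to $d^\gamma$.

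The heart of the proof is the identification of $(\widetilde{\mcE},\widetilde{\mcF})$. It is \emph{regular} because $\widetilde R\asymp d^\gamma$ makes $K$ compact and forces $\widetilde{\mcF}\subset C(K)$ with enough functions to separate points. It is \emph{local}: if $f,g\in\widetilde{\mcF}$ are compactly supported with disjoint supports, then for large $j$ every edge of $E_{m_j}$ is shorter than $d(\mathrm{supp}\,f,\mathrm{supp}\,g)$, so no edge can have simultaneously an endpoint where $f\neq0$ and an endpoint where $g\neq0$; hence $\mcD_{m_j}(f,g)=0$ and $\widetilde{\mcE}(f,g)=0$. It is \emph{locally symmetric}: each $\mcD_m$ is invariant under the isometry group of $H_0$ and satisfies the discrete analogues of conditions~(1)--(3) of the definition (using $V_m=\bigcup_{\theta\in\Theta_l}\Psi_\theta(V_{m-l})$ and the fact that $\mcD_m$ folds and unfolds to rescaled copies of $\mcD_{m-l}$), and these relations pass to $\widetilde{\mcE}$ since the functions in $\widetilde{\mcF}$ are continuous. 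Consequently Theorem~\ref{thm52} applies and gives $\widetilde{\mcE}=\lambda\mcE$ for some $\lambda>0$, hence $\widetilde R=\lambda^{-1}R$; evaluating at $q_1,q_2$ and using $\widetilde R(q_1,q_2)=\lim_j c_{m_j}k^{-\gamma m_j}R_{m_j}(q_1,q_2)=R(q_1,q_2)$ forces $\lambda=1$, so $\widetilde R=R$.

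Finally, since the above applies to \emph{every} subsequence of $(c_mk^{-\gamma m}R_m)$, every subsequence admits a further subsequence along which $c_mk^{-\gamma m}R_m\to R$ pointwise on $V_*\times V_*$; hence the full sequence converges there. Given $x_m,y_m\in V_m$ with $x_m\to x$ and $y_m\to y$, choosing $u,v\in V_*$ close to $x,y$ and combining the uniform modulus of continuity above (which for $m$ large controls $|c_mk^{-\gamma m}R_m(x_m,y_m)-c_mk^{-\gamma m}R_m(u,v)|$ once $u,v\in V_m$), the convergence $c_mk^{-\gamma m}R_m(u,v)\to R(u,v)$, and the continuity of $R$, gives $c_mk^{-\gamma m}R_m(x_m,y_m)\to R(x,y)$. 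I expect the main obstacle to be the extraction step together with the verification that $(\widetilde{\mcE},\widetilde{\mcF})$ really is a \emph{local, locally symmetric, regular} resistance form on all of $K$: this is delicate precisely because $(\mcD_m)_m$ is not a compatible sequence of resistance forms --- the trace of $\mcD_{m+1}$ on $V_m$ is not proportional to $\mcD_m$, which is the failure of exact self-similarity for carpets --- so there is no naive projective limit, and one must control the renormalized forms and their discrete symmetry structure along the chosen subsequence carefully before Theorem~\ref{thm52} can be brought to bear.
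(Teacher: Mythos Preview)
Your overall strategy matches the paper's exactly: normalize by a fixed pair of points, extract a subsequential limit resistance form, verify that it is local, regular and locally symmetric so that Theorem~\ref{thm52} forces it to equal $(\mcE,\mcF)$, and then use the subsequence argument plus the uniform H\"older estimate to get full convergence for arbitrary $x_m\to x$, $y_m\to y$. The normalization, the bound $C^{-1}\le c_m\le C$, and the final ``every subsequence has a further subsequence'' step are all done in the same way as the paper.

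There is, however, a genuine gap in your verification that the limit form is locally symmetric, and a smaller one in the locality argument. For locality you write ``$\mcD_{m_j}(f,g)=0$ and $\widetilde{\mcE}(f,g)=0$'', but under $\Gamma$-convergence the restrictions $f|_{V_{m_j}}$ need not be recovery sequences, so $\mcD_{m_j}(f|_{V_{m_j}},g|_{V_{m_j}})=0$ does not by itself yield $\widetilde{\mcE}(f,g)=0$. The paper fixes this by taking genuine recovery sequences $f_{m_i},g_{m_i}$, using the Markov property to separate their supports at the discrete level, and then combining the $\liminf$ inequality for $f\pm g$ with the parallelogram identity. More seriously, your claim that conditions (1)--(3) of local symmetry ``pass to $\widetilde{\mcE}$ since the functions in $\widetilde{\mcF}$ are continuous'' skips the hardest point. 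Condition (3), the exact cell-by-cell energy decomposition, would fail if any energy concentrated on the boundaries $\partial(\Psi_\theta H_0)$ of cells; the $\Gamma$-convergence inequalities only give one direction. The paper handles this via energy measures: it first shows (Claim~1) that $\nu_f$ behaves correctly under the cell isometries on cell interiors, and then proves (Claim~2) that $\nu_f(\partial(\Psi_\theta H_0))=0$ for all $f\in\mcF'$. Claim~2 is not soft: it uses the sub-Gaussian heat kernel estimate for $(\mcE',\mcF')$ to show $\mcE'\asymp\mcE$, and then invokes the Hino--Kumagai trace theorem (for the standard form) together with the Mosco domination principle to transfer the boundary-measure-zero property. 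This boundary-energy issue, rather than the non-compatibility of the $\mcD_m$, is the real obstruction you should flag.
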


We have the following corollary of Theorem \ref{thm53}.

\begin{definition}\label{def54}
(a). Let $(\Omega^{(m')},X^{(m')}_n,\mathbb{P}_x^{(m')})$ be the simple random walk on $G_{m'}$, where $m'\geq 0$. For $0\leq m\leq m'$, we define
\[
X_n^{(m',m)}=X^{(m)}_{t_n},\qquad \forall n\geq 0,
\]
where $t_0=\min\{n\geq 0:X^{(m')}_n\in V_m\}$ and $t_n=\min\big\{n'> t_{n-1}:X^{(m')}_{n'}\in V_m\setminus \{X^{(m')}_{t_{n-1}}\}\big\}$ for $n\geq 1$. Then, $(\Omega^{(m')},X_n^{(m',m)},\mathbb{P}_x^{(m')})$ is a reversible random walk (Markov chain) on $V_m$.

(b). Let $(\Omega,X_n,\mathbb{P}_x)$ be the diffusion on $K$ (associated with $(\mcE,\mcF)$ and $\mu$, where $(\mcE,\mcF)$ is the unique locally symmetric, local, regular resistance form on $K$, and $\mu$ is the normalized Hausdorff measure). For $m\geq 0$, let 
\[X_n^{(*,m)}=X_{t_n},\qquad \forall n\geq 0,\]
where $t_0=\inf\{t\geq 0:X_t\in V_m\}$ and $t_n=\inf\big\{t>t_{n-1}:X_t\in V_m\setminus \{X_{t_{n-1}}\}\big\}$ for $n\geq 1$. $(\Omega,X_n^{(*,m)},\mathbb{P}_x)$ is a reversible random walk on $V_m$.
\end{definition}

\begin{corollary}\label{coro55}
$\mathbb{P}^{(m')}_x(X^{(m',m)}|_{[0,\tau_y]}\in \bullet)\Rightarrow \mathbb{P}_x(X^{(*,m)}|_{[0,\tau_y]}\in \bullet)$ as $m'\to\infty$ for each $x,y\in V_m$.
\end{corollary}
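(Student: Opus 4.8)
The plan is to reduce the claim to a purely finite problem on $V_m$. Both $X^{(m',m)}$ and $X^{(*,m)}$ are reversible random walks on the finite set $V_m$ obtained by watching a reversible process along its successive distinct visits to $V_m$; by the standard description of traces of resistance forms recalled around Definition~\ref{def33}, $X^{(m',m)}$ is the random walk driven by the trace of the network $(\mcD_{m'},l(V_{m'}))$ onto $V_m$, while $X^{(*,m)}$ is the one driven by the trace of $(\mcE,\mcF)$ onto $V_m$. Tracing an electrical network onto a subset does not alter the effective resistance between the retained points, so the trace network behind $X^{(m',m)}$ realizes the metric $R_{m'}|_{V_m\times V_m}$ and the one behind $X^{(*,m)}$ realizes $R|_{V_m\times V_m}$. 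Hence the laws of $X^{(m',m)}|_{[0,\tau_y]}$ and of $X^{(*,m)}|_{[0,\tau_y]}$ are each determined by a single resistance metric on the finite set $V_m$.

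Next I would exploit that rescaling all conductances of a finite network by one common positive constant leaves the induced discrete-time random walk unchanged and divides every effective resistance by that constant. Thus $X^{(m',m)}|_{[0,\tau_y]}$ has the same law as the killed walk of the trace of $\mcD_{m'}$ renormalized to have resistance metric $c_{m'}k^{-\gamma m'}R_{m'}|_{V_m\times V_m}$. Applying Theorem~\ref{thm53} with the constant sequences $x_{m'}\equiv x$, $y_{m'}\equiv y$ (allowed since $x,y\in V_m\subset V_{m'}$ for every $m'\ge m$) yields $c_{m'}k^{-\gamma m'}R_{m'}(a,b)\to R(a,b)$ for all $a,b\in V_m$, so these renormalized metrics converge entrywise to $R|_{V_m\times V_m}$. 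On the fixed finite set $V_m$ the map sending a (non-degenerate) resistance metric to the transition kernel of its associated reversible random walk is a homeomorphism onto its image: the forward assignment is algebraic, its injectivity is Proposition~\ref{prop31}, the limiting metric is a genuine resistance metric with every point of positive capacity, and the uniform estimates of Theorem~\ref{thm51} keep the relevant resistances (hence conductances) in a compact positive range along the sequence. Therefore the transition kernels $\widetilde P_{m'}$ of $X^{(m',m)}$ converge entrywise on $V_m$ to the transition kernel $\widetilde P_\ast$ of $X^{(*,m)}$.

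Finally I would pass from entrywise kernel convergence to weak convergence of the killed paths. The set of finite paths on $V_m$ is countable, so weak convergence is the same as convergence of the probability of each individual path; for a path $\bw=(w_0,\dots,w_k)$ with $w_0=x$, $w_k=y$ and $w_i\neq y$ for $i<k$, this probability equals $\widetilde P_{m'}(w_0,w_1)\cdots\widetilde P_{m'}(w_{k-1},w_k)$, a finite product of kernel entries converging to $\widetilde P_\ast(w_0,w_1)\cdots\widetilde P_\ast(w_{k-1},w_k)$. Since $G_{m'}$ is connected, $\widetilde P_{m'}$ is irreducible, and $\widetilde P_\ast$, being the walk of a genuine network on $V_m$, is irreducible as well; a finite irreducible chain is recurrent, so $\mathbb{P}(\tau_y<\infty)=1$ and the total mass over all such paths equals $1$ for every $m'$ and in the limit. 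Scheffé's lemma on the counting measure then upgrades the pointwise convergence of these masses to total-variation convergence, which in particular gives the asserted weak convergence (and, after composing with the image map, $d_H$-convergence of the induced laws on $(\mathcal K,d_H)$).

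The one genuinely delicate point is the continuity assertion in the middle paragraph — that convergence of the renormalized trace resistances forces convergence of the trace conductances, i.e. that the algebraic inverse of ``network $\mapsto$ effective resistances'' is continuous at the limit and that no conductance degenerates there. This is precisely where Proposition~\ref{prop31} and the two-sided resistance estimates of Theorem~\ref{thm51} are used; the remaining steps are routine.
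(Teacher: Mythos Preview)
Your argument is correct and follows the same route as the paper: identify both $X^{(m',m)}$ and $X^{(*,m)}$ as the random walks attached to the trace resistance metrics $R_{m'}|_{V_m\times V_m}$ and $R|_{V_m\times V_m}$, use Theorem~\ref{thm53} to get convergence of the (renormalized) metrics, deduce convergence of transition kernels, and conclude. The paper handles your ``delicate point'' by citing an external lemma (Lemma~2.8 of \cite{C}) stating directly that on a fixed finite set $c_{x,y,i}\to c_{x,y}$ is equivalent to $R'_i(x,y)\to R'(x,y)$; your compactness/injectivity sketch is essentially an inline proof of that lemma, and your Scheff\'e step makes explicit what the paper leaves as ``follows immediately.''
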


\noindent\textbf{Remark.} In the statement of Corollary \ref{coro55}, $\tau_y$ depends on the process that we talk about. 

\noindent\textbf{Remark.} $X^{(*,m)}|_{[0,\tau_y]}$ is the same as $X^{[V_m,y]}|_{[0,\tau_y]}$, which is defined in Definition \ref{def33}.

\begin{proof}
	First, we recall an easy observation about resistance forms on finite sets. Let $V$ be a finite set, and let $\mcD'_i,i\geq 1$ and $\mcD'$ be resistance forms on $V$. We let $c'_{x,y,i}\geq 0$ be the conductance between $x\neq y\in V$ associated with $\mcD'_i$, and let $c'_{x,y}\geq 0$ be the conductance between $x\neq y\in V$ associated with $\mcD'$, i.e.  $\mcD'_i(f,f)=\frac{1}{2}\sum_{x\neq y}c'_{x,y,i}\big(f(x)-f(y)\big)^2$ and $\mcD'(f,f)=\frac{1}{2}\sum_{x\neq y}c'_{x,y}\big(f(x)-f(y)\big)^2$ for each $f\in l(V)$. Then, $c_{x,y,i}\to c_{x,y},\forall x\neq y\in V$ if and only $R'_i(x,y)\to R'(x,y),\forall x\neq y\in V$ (see Lemma 2.8 of \cite{C}). Moreover, clearly, the transition kernels of the associated random walks converge if $c_{x,y,i}\to c_{x,y}$ for any $x\neq y\in V$. 
	
	Now, we notice that $(\Omega^{(m')},X_n^{(m',m)},\mathbb{P}_x^{(m')})$ is the random walk associated with the resistance metric $R_{m'}$ on $V_m$; $(\Omega,X_n^{(*,m)},\mathbb{P}_x)$ is the random walk associated with the resistance metric $R$ on $V_m$. In fact, for $a,b\in V_m$, $\mathbb{P}^{(m')}_a(X^{(m',m)}_1=b)=h(a)$, where $h$ is the unique function on $V_{m'}$ such that $h(b)=1,h|_{V_m\setminus \{a,b\}}=0$ and $h$ is harmonic elsewhere on $V_{m'}$; similarly, $\mathbb{P}_a(X^{(*,m)}_1=b)=h(a)$, where $h$ is the unique function on $K$ such that $h(b)=1,h|_{V_m\setminus \{a,b\}}=0$ and $h$ is harmonic elsewhere on $K$. In both cases, to compute $h(a)$, we only need the trace of $\mcD_{m'}$ or $(\mcE,\mcF)$ onto $V_m$, so the claim holds.
	
	The corollary follows immediately from Theorem \ref{thm53}, with the above observations. 
\end{proof}

Finally, we point out that all the proofs in Section 4.2 also work here, since we have the uniform resistance estimates (Theorem \ref{thm51}).  \vspace{0.15cm}

\noindent\textbf{$(m',\rho,V)$-Long-jumps.} Let $V$ be a finite subset of $V_{m'}$.  A $(m',\rho,V)$-long-jump in a finite path $\bw$ on $V_{m'}$ is a subpath $\bw|_{[s_1,s_2]}$ such that $d(w_{s_1},w_{s_2})\geq \rho$ and $\Im(\bw|_{[s_1,s_2]})\cap V=\emptyset$. Let $\mathscr{J}(m',\rho,V)$ denote the set of paths that have a $(m',\rho,V)$-long-jump.
 
\noindent\textbf{\emph{$(m',\rho)$-Loops}.} A $(m',\rho)$-loop is a is a subarc $\bw|_{[s_1,s_2]}$ of $\bw$ such that $w_{s_1}=w_{s_2}$ and $\Im(\bw|_{[s_1,s_2]})\nsubseteq B_d(w_{s_1},\rho)$. Let $\mathscr{L}(m',\rho)$ denote the set of paths that have a $(m',\rho)$-loop.

\begin{lemma}\label{lemma56}
Let $x,y\in V_*$ and $\rho>0$.
 	
(a). $\lim\limits_{m\to\infty}\sup\limits_{m'\geq m}\mathbb{P}^{(m')}_x\big(\mfL_{V_m}(X^{(m')}|_{[0,\tau_y]})\in\mathscr{J}(m',\rho,V_m)\big)=0$. 
 	
(b). $\lim\limits_{m\to\infty}\sup\limits_{m'\geq m}\mathbb{P}^{(m')}_x\big(\mfL_{V_m}(X^{(m')}|_{[0,\tau_y]})\in\mathscr{L}(m',\rho)\big)=0$. 
\end{lemma}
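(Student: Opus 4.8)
The plan is to mirror the structure of the proof of Lemma \ref{lemma46}, replacing the diffusion $X_t$ with the random walks $X^{(m')}$ and replacing the continuity-based estimates with the uniform resistance estimates of Theorem \ref{thm51}. The crucial point is that all the constants produced along the way must be uniform in $m'$, which is exactly what Theorem \ref{thm51} provides. First I would set up the same skeleton of stopping times: fix balls $B_1 = B_d(z_0,\rho)$ and $B_2 = B_d(z_0,2\rho)$ for a generic center $z_0$, define the successive entries $s_i$ into $B_1$ and exits $t_i$ from $B_2$ up to time $\tau_y$, and let $\iota$ count the number of such excursions. Using the analog of Lemma \ref{lemma36} for the reversible walk $X^{(m')}$ — or more directly, the uniform lower bound on $\mathbb{E}^{(m')}_w(\tau_{V_{m'}\setminus B_2})$ coming from the resistance estimate (Theorem \ref{thm51}) together with a Green's-function identity — one shows $\sup_{m'}\mathbb{E}^{(m')}_x(\iota) < \infty$, since each excursion consumes a definite amount of expected time while the total expected time $\mathbb{E}^{(m')}_x(\tau_y)$ is bounded. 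The bound on the total expected time itself follows from the uniform resistance estimate $R_{m'}(x,y) \lesssim d(x,y)^\gamma$ and Lemma \ref{lemma36} (reversible-walk version), using that $R_{m'}$ rescaled converges (Theorem \ref{thm53}), so it stays bounded for all large $m'$.

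For part (a): to bound $\mathbb{P}^{(m')}_x(\mfL_{V_m}(X^{(m')}|_{[0,\tau_y]}) \in \mathscr{J}(m',\rho,V_m))$, I would first drop $\mfL_{V_m}$ (since erasing loops cannot create a long jump through a region avoiding $V_m$) and bound the probability that the raw path $X^{(m')}|_{[0,\tau_y]}$ has such a jump. Decomposing over the $\iota$ excursions, it suffices to show $\sup_{z_0}\sup_{w \in \mathrm{cl}(B_1)\cap V_{m'}} \mathbb{P}^{(m')}_w(X^{(m')}|_{[0,\tau_{V_{m'}\setminus B_2}]} \in \mathscr{J}(m',\rho',V_m)) \to 0$ as $m \to \infty$, uniformly in $m' \ge m$. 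Here I would use Theorem \ref{thm51} exactly as Lemma \ref{lemma35} was used in Lemma \ref{lemma46}(a): choose a small annular neighborhood $A'$ just inside $\partial B_2$; the resistance estimate forces the walk started on $\partial A'$ to hit $V_m$ (which is dense in scale $k^{-m}$) before traveling a definite resistance-distance, hence before returning to $B_1$ or leaving $B_2$, with probability $\ge 1 - \varepsilon$ once $m$ is large — and this estimate is uniform in $m'$ because the resistance bounds in Theorem \ref{thm51} have $m'$-independent constants after the $k^{-\gamma m'}$ scaling. Summing over a fixed finite cover of $U$ by such balls $B_d(z_j,\rho/3)$ (together with a boundary term handled as in Lemma \ref{lemma46}(b)) upgrades the single-center estimate to the global $\mathscr{J}(m',\rho,V_m)$ statement.

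For part (b): this is the analog of Lemma \ref{lemma46}(c)–(d), and I would argue as there. Let $\mathscr{L}_{i,m'}$ be the event that $\mfL_{V_m}(X^{(m')}|_{[0,t_i]}) \in \mathscr{L}(m',\rho)$ (a $\rho$-loop around the generic center $z_0$); the key claim is $\sup_{m'\ge m}\mathbb{P}^{(m')}(\mathscr{L}_{i,m'}, \neg\mathscr{L}_{i-1,m'}, \iota \ge i) \to 0$ for each fixed $i$. The reasoning is the Schramm-style argument from \cite{S}: on $\neg\mathscr{L}_{i-1,m'} \cap \mathscr{L}_{i,m'}$, the $i$-th excursion must hit a component of $\Im\mfL_{V_m}(X^{(m')}|_{[0,t_{i-1}]}) \cap B_2$ connected to $B_1$ without hitting the $V_m$-points on it; using part (a) to discard the (rare) event of a short long-jump in the already-erased path, this forces the walk to come within distance $\sim k^{-m}$ of a $V_m$-point without hitting it, which has small probability uniformly in $m'$ by Theorem \ref{thm51}; there are at most $i-1$ relevant components. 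Summing $i$ from $2$ to $M$, adding the tail $\mathbb{P}^{(m')}_x(\iota > M) \le \mathbb{E}^{(m')}_x(\iota)/M$, and letting $M \to \infty$ slowly with $m$ gives the center-wise bound; a finite cover of $U$ and a boundary term as in Lemma \ref{lemma46}(d) then yield the full statement. The main obstacle is ensuring every estimate is genuinely uniform in $m'$ — in particular that the "hitting $V_m$ before escaping" probabilities and the expected-excursion-count bound do not degrade as $m' \to \infty$ — and this is precisely where one leans on the fact that the constants $C_1, C_2$ in Theorem \ref{thm51} are independent of $m'$, supplemented by the boundedness of the rescaled resistance metrics from Theorem \ref{thm53}.
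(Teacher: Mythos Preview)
Your proposal is correct and follows essentially the same approach as the paper. In fact the paper's ``proof'' of this lemma is a single sentence stating that the argument is exactly that of Lemma~\ref{lemma46}, with the observation that every estimate there depends only on the choice of covering and on resistance bounds, both of which are uniform in $m'$ by Theorem~\ref{thm51}; your write-up is precisely the detailed expansion of that remark. One small comment: you do not need to invoke Theorem~\ref{thm53} for the uniformity---Theorem~\ref{thm51} alone already gives $m'$-independent constants for all the resistance comparisons you use (in particular for the ratio bounding $\mathbb{E}^{(m')}_x(\iota)$, since both the numerator and denominator scale the same way in $m'$).
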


The proof is exactly the same as that of Lemma \ref{lemma46}, noticing that all the estimates in the proof of Lemma \ref{lemma46} depends on the choice of covering, and the resistance estimates, which are both uniform on $V_{m'}$ (in fact, we only need to consider large enough $m'$). Combining with Lemma \ref{lemma46}, we arrive at the last main theorem of the paper, the existence of the scaling limit of LERW that starts at $x$ and ends at $y$ on $\mathcal{SC}$.

\begin{proof}[Proof of Theorem \ref{th3}]
Let 
\[
\begin{cases}
	\nu^{(m')}_{x,y}=\mathbb{P}_x^{(m')}\big(\Im\mfL(X^{(m')}|_{[0,\tau_y]})\in \bullet\big),\\
	\tilde{\nu}^{(m',m)}_{x,y}=\mathbb{P}_x^{(m')}\big(\Im\mfL_{V_m}(X^{(m')}|_{[0,\tau_y]})\in \bullet\big),\\
	\nu^{(m',m)}_{x,y}=\mathbb{P}_x^{(m')}\big(\Im\mfL(X^{(m',m)}|_{[0,\tau_y]})\in \bullet\big),\\
	\nu^{(*,m)}_{x,y}=\mathbb{P}_x\big(\Im\mfL(X^{(*,m)}|_{[0,\tau_y]})\in \bullet\big).
\end{cases}
\]
By Lemma \ref{lemma56} (b) and Theorem \ref{th1}, we have $\lim\limits_{m\to\infty}\sup\limits_{m'\geq m}\pi(\nu^{(m')}_{x,y},\tilde{\nu}^{(m',m)}_{x,y})=0$; by Lemma \ref{lemma56} (a), we have $\lim\limits_{m\to\infty}\sup\limits_{m'\geq m}\pi(\tilde{\nu}^{(m',m)}_{x,y},\nu^{(m',m)}_{x,y})=0$; by Corollary \ref{coro55}, we have $\lim\limits_{m'\to\infty}\pi(\nu^{(m',m)}_{x,y},\nu^{(*,m)}_{x,y})=0$ for each $m\geq 0$ such that $x,y\in V_m$; lastly, by Theorem \ref{th2}, $\lim\limits_{m\to\infty}\pi(\nu^{(*,m)}_{x,y},\nu_{x,y})=0$. 
\end{proof}

\subsection{Appendix of Section 5: Proof of Theorem \ref{thm53}.} 
We will use some results from \cite{C}. Most of them are easy, and we only review a result about some non-standard $\Gamma$-convergence here. Also see \cite{C1},\cite{CHK} for related (earlier works) about the convergence of the associated stochastic processes.

\begin{definition}[\cite{C}]\label{def58}
Let $(B,d)$ be some compact metric space; let $A_n,n\geq 1$ and $A$ be compact subsets of $(B,d)$ such that $d_H(A_n,A)\to 0$ as $n\to \infty$.

(a). Let $f_n\in C(A_n)$ for $n\geq 1$ and $f\in C(A)$. We write $f_n\rightarrowtail f$ if $f(x)=\lim\limits_{n\to\infty}f_n(x_n)$ for any $x\in A$ and $x_n\in A_n,n\geq 1$ such that $x_n\to x$ as $n\to\infty$.

(b). For each $n\geq 1$, let $\mcE_n$ be a quadratic form (with extended real values)  on $C(A_n)$; let $\mcE$ be a quadratic form (with extended real values) on $C(A)$. We say $\mcE_n$ $\Gamma$-converges to $\mcE$ on $C(B)$ if and only if (a),(b) hold:

\noindent(1). If $f_n\rightarrowtail f$, where $f_n\in C(A_n),\forall n\geq 1$ and $f\in C(A)$, then 
\[\mcE(f,f)\leq \liminf_{n\to\infty} \mcE_n(f_n,f_n).\]

\noindent(2). For each $f\in C(A)$, there exists a sequence $f_n\in C(A_n), n\geq 1$ such that $f_n\rightarrowtail f$ and 
\[\mcE(f,f)=\lim_{n\to\infty} \mcE_n(f_n,f_n).\]
\end{definition}

\noindent\textbf{Remark}. It is not hard to show (see Proposition 2.3 of \cite{C}) $f_n\rightarrowtail f$ if and only if there exists $g_n\in C(B),n\geq 1$ and $g\in C(B)$ such that $f_n=g_n|_{A_n},n\geq 1$, $f=g|_A$ and $g_n$ converges uniformly to $g$. This explains the name `$\Gamma$-converge on $C(B)$'.

\noindent\textbf{Remark}. Theorem \ref{thm53} says $c_mk^{-m\gamma}R_m\rightarrowtail R$.

\begin{lemma}[\cite{C}]\label{lemma59}
Assume the same settings of Definition \ref{def58}. Let $R_n\in C(A_n^2)$ be a resistance metric on $A_n$ for $n\geq 1$, let $R\in C(A^2)$ be a resistance metric on $A$, and assume $R_n\rightarrowtail R$. Let $(\mcE_n,\mcF_n)$ be the resistance form associated with $R_n$ for $n\geq 1$, and let $(\mcE,\mcF)$ be the resistance form associated with $R$. Then, we have $\mcE_n$ $\Gamma$-converges to $\mcE$ on $C(B)$.
\end{lemma}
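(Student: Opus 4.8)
The statement is a $\Gamma$-convergence assertion, so by Definition \ref{def58} there are two inequalities to establish: the lower bound $\mcE(f,f)\le\liminf_n\mcE_n(f_n,f_n)$ whenever $f_n\rightarrowtail f$, and the existence, for each $f\in C(A)$, of a recovery sequence $f_n\rightarrowtail f$ with $\mcE_n(f_n,f_n)\to\mcE(f,f)$. The three tools I would rely on are: (i) for any finite $V\subset A$ the trace form $\mcD_V$ is determined by $R|_{V\times V}$ and satisfies $\mcD_V(f|_V,f|_V)=\min\{\mcE(h,h):h|_V=f|_V\}$ together with the characterization $\mcE(f,f)=\sup_V\mcD_V(f|_V,f|_V)$ over finite $V$ (Theorem \ref{thm32}(a) and the standard monotone exhaustion of resistance energies); (ii) on a finite set of fixed cardinality the conductances of a resistance form depend continuously on the resistance metric (Lemma 2.8 of \cite{C}, as already used in the proof of Corollary \ref{coro55}); and (iii) the pointwise resistance bound $|g(u)-g(v)|^2\le R_n(u,v)\,\mcE_n(g,g)$ valid for any $g\in\mcF_n$, which will supply equicontinuity.

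For the lower bound, I would fix $f_n\rightarrowtail f$ and a finite set $V=\{x_1,\dots,x_k\}\subset A$, then choose $x_i^{(n)}\in A_n$ with $x_i^{(n)}\to x_i$ (possible since $d_H(A_n,A)\to0$); for large $n$ these points are distinct because $R_n(x_i^{(n)},x_j^{(n)})\to R(x_i,x_j)>0$. Writing $V_n=\{x_1^{(n)},\dots,x_k^{(n)}\}$ and $\mcD_{V_n}^{(n)}$ for the trace of $\mcE_n$ on $V_n$ (pulled back to the index set $\{1,\dots,k\}$), monotonicity of traces gives $\mcD_{V_n}^{(n)}(f_n|_{V_n},f_n|_{V_n})\le\mcE_n(f_n,f_n)$. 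Since $R_n\rightarrowtail R$ forces $R_n(x_i^{(n)},x_j^{(n)})\to R(x_i,x_j)$, tool (ii) yields convergence of the conductances of $\mcD_{V_n}^{(n)}$ to those of $\mcD_V$; as $f_n\rightarrowtail f$ gives $f_n(x_i^{(n)})\to f(x_i)$, and the finite quadratic form depends jointly continuously on conductances and values, I obtain $\mcD_{V_n}^{(n)}(f_n|_{V_n},f_n|_{V_n})\to\mcD_V(f|_V,f|_V)$. Hence $\mcD_V(f|_V,f|_V)\le\liminf_n\mcE_n(f_n,f_n)$, and taking the supremum over finite $V$ proves $\mcE(f,f)\le\liminf_n\mcE_n(f_n,f_n)$.

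For the recovery sequence I would first dispose of the case $f\notin\mcF$: then $\mcE(f,f)=\infty$, so by the lower bound any $f_n\rightarrowtail f$ (e.g.\ $f_n=\tilde f|_{A_n}$ for a fixed extension $\tilde f\in C(B)$, using Proposition 2.3 of \cite{C}) automatically has $\mcE_n(f_n,f_n)\to\infty$. When $f\in\mcF$, I would fix an increasing sequence of finite sets $V^{(1)}\subset V^{(2)}\subset\cdots$ with dense union, so that the harmonic extensions $h^{(k)}$ of $f|_{V^{(k)}}$ satisfy $\mcE(h^{(k)},h^{(k)})=\mcD_{V^{(k)}}(f|_{V^{(k)}},f|_{V^{(k)}})\uparrow\mcE(f,f)$ and, via tool (iii) applied to $h^{(k)}-f$ together with uniform density, $h^{(k)}\to f$ uniformly. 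For each fixed $k$, choosing $V_n^{(k)}\subset A_n$ approximating $V^{(k)}$ as above, I would let $g_n^{(k)}\in\mcF_n$ be the $\mcE_n$-harmonic extension prescribing the values $f(x_i)$ at the points of $V_n^{(k)}$; since only the conductances move (the prescribed values are fixed in $n$), tool (ii) gives $\mcE_n(g_n^{(k)},g_n^{(k)})\to\mcE(h^{(k)},h^{(k)})$. The energies being bounded, tool (iii) makes $\{g_n^{(k)}\}_n$ equicontinuous and uniformly bounded, so Arzel\`a--Ascoli together with the already-proven lower bound and the uniqueness of the minimizer in Theorem \ref{thm32}(a) force $g_n^{(k)}\rightarrowtail h^{(k)}$ as $n\to\infty$. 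A diagonal choice $n_k\uparrow\infty$ controlling both $\|g_n^{(k)}-h^{(k)}\|_\infty$ (through extensions to $C(B)$) and $|\mcE_n(g_n^{(k)},g_n^{(k)})-\mcE(h^{(k)},h^{(k)})|$, combined with $h^{(k)}\to f$ and $\mcE(h^{(k)},h^{(k)})\to\mcE(f,f)$, then produces $f_n:=g_n^{(k(n))}$ with $f_n\rightarrowtail f$ and $\mcE_n(f_n,f_n)\to\mcE(f,f)$.

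I expect the recovery sequence to be the main obstacle. The delicate points are establishing that the discrete harmonic extensions converge in the $\rightarrowtail$ sense to the continuous harmonic extension (which relies on upgrading the energy bound and the resistance modulus (iii) to genuine equicontinuity with respect to a single metric, and on identifying the Arzel\`a--Ascoli limit through uniqueness of energy minimizers), and arranging the diagonalization so that the mixed limit $k(n)\to\infty$ still yields $\rightarrowtail$ convergence --- for which the uniform closeness $\|g_n^{(k)}-h^{(k)}\|_\infty$ and the equicontinuity of the family $\{h^{(k)}\}$ are exactly what is needed. The lower bound, by contrast, is a routine finite-dimensional continuity argument once the characterization $\mcE(f,f)=\sup_V\mcD_V(f|_V,f|_V)$ is invoked.
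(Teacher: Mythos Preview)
The paper does not contain a proof of this lemma: it is stated with the citation \cite{C} and used as a black box in the proof of Theorem \ref{thm53}. There is therefore no ``paper's own proof'' against which to compare your proposal.

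That said, your outline is the natural one and essentially correct. The lower bound via finite traces and continuity of conductances in the resistance metric (your tool (ii), which the paper also invokes in the proof of Corollary \ref{coro55}) is exactly the right mechanism; the identity $\mcE(f,f)=\sup_V\mcD_V(f|_V,f|_V)$ you rely on is standard for resistance forms (it follows from Theorem \ref{thm32}(a) and (RF2)). For the recovery sequence, your strategy of harmonic extensions at approximating finite sets plus a diagonal argument is again the standard route. The one point worth tightening is the equicontinuity step: the bound $|g(u)-g(v)|^2\le R_n(u,v)\mcE_n(g,g)$ lives on $(A_n,R_n)$, so to apply Arzel\`a--Ascoli on $B$ you need that $R_n\rightarrowtail R$ implies a uniform modulus of continuity in the ambient metric $d$ --- this follows from $R\in C(A^2)$ and the convergence $R_n\rightarrowtail R$ on $A_n^2\to A^2$, but it should be made explicit. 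You correctly flag this as the delicate part.
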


\begin{proof}[Proof of Theorem \ref{thm53}]
Fix $p\neq q\in V_0$, and let $c_m=k^{\gamma m}R(p,q)/R_m(p,q)$ for each $m\geq 0$. Then, by Theorem \ref{thm51}, we know that $C^{-1}<c_m<C$ for some constant $C>1$ depending only on $K$. For short, we write $R'_m(x,y)=c_mk^{-\gamma m}R_m(x,y)$ for the renormalized resistance metric on $V_m$, and $\mcD'_m=c^{-1}_mk^{\gamma m}\mcD_m$ for the renormalized resistance form on $V_m$. In particular, $R'_m(p,q)=R(p,q)$.
	
By Lemma 2.2 of \cite{C}, Theorem 2.9 of \cite{C} and Theorem \ref{thm51}, we know that there is a subsequence $m_i,i\geq 1$ and a resistance metric $R'\in C(K^2)$ such that $R_{m_i}\rightarrowtail R'$ and we let $(\mcE',\mcF')$ be the associated resistance form on $K$. We will show that $(\mcE',\mcF')=(\mcE,\mcF)$ by using the uniqueness Theorem (Theorem \ref{thm51}). We need to verify the local property, the regular property, and (1),(2),(3) of the definition of the locally symmetric forms.\vspace{0.15cm}
	
\noindent\textbf{Regular.} It is easy to see that $R'$ satisfies the estimate $C_1d(x,y)^{\gamma}\leq R'(x,y)\leq  C_2d(x,y)^{\gamma}$ by Theorem \ref{thm51} (see also Theorem 2.9 of \cite{C}). Hence, $(K,R')$ is compact, and the regular property follows from Corollary 6.4 of \cite{ki5}. \vspace{0.15cm}

\noindent\textbf{Local.} Let $f,g\in \mcF'$ such that $\text{supp}(f)\cap \text{supp}(g)=\emptyset$, so that $\inf\{d(x,y):x\in \text{supp}(f),y\in \text{supp}(g)\}>0$. We can find $f_{m_i}\in l(V_{m_i}),g_{m_i}\in l(V_{m_i}),i\geq 1$ such that $f_{m_i}\rightarrowtail f,g_{m_i}\rightarrowtail g$ and $\mcE'(f,f)=\lim\limits_{i\to\infty}\mcD'_{m_i}(f_{m_i},f_{m_i}),\mcE'(g,g)=\lim\limits_{i\to\infty}\mcD'_{m_i}(g_{m_i},g_{m_i})$ by Lemma \ref{lemma59}. In addition, by the remark below Definition \ref{def58}, by Lemma \ref{lemma59}, and by an easy application of the Markov property, we can in addition assume that 
\[\big(f_{m,i}(x)-f_{m,i}(y)\big)\big(g_{m,i}(x)-g_{m,i}(y)\big)=0,\quad\forall\{x,y\}\in E_{m_i},\]
for any $i$ large enough. Hence, by Lemma \ref{lemma59},
\begin{align*}
\mcE'(f+g,f+g)&\leq \liminf\limits_{i\to\infty} \mcD'_{m_i}(f_{m,i}+g_{m,i},f_{m,i}+g_{m,i})\\
&=\mcE'(f,f)+\mcE'(g,g)\\
\mcE'(f-g,f-g)&\leq \liminf\limits_{i\to\infty} \mcD'_{m_i}(f_{m,i}-g_{m,i},f_{m,i}-g_{m,i})\\
&=\mcE'(f,f)+\mcE'(g,g)
\end{align*}
This implies that $\mcE'(f+g,f+g)=\mcE'(f-g,f-g)=\mcE'(f,f)+\mcE'(g,g)$, noticing that $\mcE'(f+g,f+g)+\mcE'(f-g,f-g)=2\mcE'(f,f)+2\mcE'(g,g)$. Hence $\mcE'(f,g)=0$.\vspace{0.15cm}
	
\noindent(1). For any $f\in \mcF'$, we can find $f_{m_i}\in l(V_{m_i})$ such that $f_{m_i}\rightarrowtail f$ and $\mcE'(f,f)=\lim\limits_{i\to\infty}\mcD'_{m_i}(f_{m_i},f_{m_i})$ by Lemma \ref{lemma59}. Then, by Lemma \ref{lemma59},
\begin{align*}
&\mcE'\big(\mathcal{U}_{\theta}(f|_{\Psi_\theta K}),\mathcal{U}_{\theta}(f|_{\Psi_\theta (K)})\big)\\
\leq&\liminf\limits_{i\to\infty}\mcD'_{m_i}\big((f_{m_i}|_{\Psi_\theta (V_{m_i-m})})\circ\widetilde{\Gamma}_\theta,(f_{m_i}|_{\Psi_\theta (V_{m_i-m})})\circ\widetilde{\Gamma}_\theta\big)\\
\leq& N^{m_i}\liminf\limits_{i\to\infty}\mcD'_{m_i}(f_{m_i},f_{m_i})
=N^{m_i}\mcE'(f,f).
\end{align*}
Hence $\mathcal{U}_{\theta}(f|_{\Psi_\theta K})\in \mcF'$. \vspace{0.15cm}
	
By a same argument as in (1), one can show that for any $m\geq 0$, $\theta,\theta'\in \Theta_m$ and any isometry $\Gamma:\Psi_{\theta'}(H_0)\to \Psi_{\theta}(H_0)$, if $\mathcal{U}_\theta(f)\in \mcF'$, where $f\in C\big(\Psi_\theta (K)\big)$, then $\mathcal{U}_{\theta'}(f\circ \Gamma)\in \mcF'$. So in the following, we do not need to worry about whether $f\in \mcF'$, and it remains to show the equities in (2),(3). We introduce the tool of energy measures. \vspace{0.15cm}
	
\noindent\textbf{\emph{Energy measure}.} The energy measure $\nu_f$ associated with $f\in \mcF'$ (and $(\mcE',\mcF')$) is the unique Radon measure on $K$ such that
\[\int_K g(x)\nu_f(dx)=2\mcE'(fg,f)-\mcE'(f^2,g), \quad \forall g\in\mcF'.\]

It is well known that $\nu_f(K)=2\mcE'(f,f)$ (see Lemma 3.2.3 of \cite{FOT}) as $(\mcE',\mcF')$ is strongly local. A useful fact is that $\nu_f(A)$, where $A$ is a Borel subset of $K$, only depends on the value of $f$ on $A$ (this is an observation in \cite{BBKT}. See Lemma 2.7 of \cite{BBKT} and page 123 of \cite{FOT} for essential tools).\vspace{0.15cm}
	
\noindent\textbf{Claim 1.} Let $\square_1=\Psi_{\theta}H_0$ and $\square_2=\Psi_{\theta'}H_0$ for some $\theta,\theta'\in \Theta_m,m\geq 0$, and let $\Gamma:\square_2\to\square_1$ be an isometry. Let $A_2$ be any Borel subset of $\text{int}(\square_2)\cap K$, and $A_1=\Gamma(A_2)$. Then, if $f_1,f_2\in \mcF$ and $f_2|_{\Psi_{\theta'}(K)}=f_1|_{\Psi_\theta(K)}\circ\Gamma$, we have $\nu_{f_1}(A_1)=\nu_{f_2}(A_2)$. 
	
\begin{proof}[Proof of Claim 1]
We can see that for any $f\in \mcF'$ supported on $\square_1\cap K$, 
\[\mcE'(f,f)=\mcE'(f\circ\Gamma,f\circ \Gamma),\]
where with a little abuse of notation, $f\circ \Gamma(x)=f\big(\Gamma(x)\big)$ if $x\in K\cap\square_2$, and $f\circ \Gamma|_{K\setminus \square_2}=0$. To see this, we apply Lemma \ref{lemma59} and the Markov property to see that there is a sequence $f_{m_i}\in l(V_{m_i}),i\geq 0$, such that $f_{m_i}(x)=0,\forall x\in \Psi_{\theta'}(V_{m_i-m})$ for any $m_i\geq m$ and $\theta'\in \Theta_m\setminus \{\theta\}$, and $\mcE'(f,f)=\lim\limits_{i\to\infty}\mcD'_{m_i}(f_{m_i},f_{m_i})$.  Again, by Lemma \ref{lemma59},
\[
\begin{aligned}
	\mcE'(f,f)&=\lim\limits_{i\to\infty}\mcD'_{m_i}(f_{m_i},f_{m_i})\\
	&=\lim\limits_{i\to\infty}\mcD'_{m_i}(f_{m_i}\circ \Gamma,f_{m_i}\circ \Gamma)\geq \mcE'(f\circ\Gamma,f\circ\Gamma).
\end{aligned}
\]
The reverse direction, $\mcE'(f,f)\leq \mcE'(f\circ\Gamma,f\circ\Gamma)$, can be proved with a same argument. 

Next, we choose compact $A_2'\subset A_2$ (so $d(A_2',\partial\square_2)>0$) and let $A_1'=\Gamma(A_2')$. Then we choose $f\in \mcF'$ supported on $\square_1\cap K$ (by using the regular property) such that $f|_{A_1'}=f_1|_{A_1'}$, so $f\circ \Gamma|_{A_2'}=f_2|_{A_2'}$. One can see that 
\[
\begin{aligned}
&\int_K g(x)\nu_f(dx)
=2\mcE'(fg,f)-\mcE'(f^2,g)\\
=&2\mcE'\big((fg)\circ \Gamma,f\circ \Gamma\big)-\mcE'\big((f^2)\circ\Gamma,g\circ\Gamma\big)
=\int_K g\circ \Gamma(x)\nu_{f\circ\Gamma}(dx)
\end{aligned}
\]
for any $g\in \mcF'$ supported on $\square_1\cap K$. One can check that $\{g\in \mcF':g|_{K\setminus\square_1}=0\}$ is dense in $\{g\in C(K):g|_{K\setminus \square_1}=0\}$ by the regular property of $(\mcE',\mcF')$. It follows that $\nu_{f_1}(A'_1)=\nu_f(A'_1)=\nu_{f\circ\Gamma }(A'_2)=\nu_{f_2}(A'_2)$.

Finally, by the inner regular property of the energy measure (as a Radon measure),  we have $\nu_{f_1}(A_1)=\nu_{f_2}(A_2)$.
\end{proof}

\noindent\textbf{Claim 2.} Let $\square=\Psi_{\theta}H_0$ for some $\theta\in \Theta_m,m\geq 0$. Then, $\nu_f(\partial\square)=0,\forall f\in \mcF$.  

\begin{proof}[Proof of Claim 2]  Since $R'$ satisfies the estimate $C_1d(x,y)^{\gamma}\leq R'(x,y)\leq  C_2d(x,y)^{\gamma}$ and the normalized Hausdorff measure $\mu$ on $(K,d)$ is Ahlfors regular, by Theorem 15.10 and 15.11 of \cite{ki5}, the heat kernel associated with $(\mcE',\mcF')$ on $L^2(K,\mu)$ has the sub-Gaussian estimates. Hence $(\mcE',\mcF')$ is comparable with the standard form $(\mcE,\mcF)$, which also admits the sub-Gaussian heat kernel estimates \cite{BB2,BB4}, in the sense that $\mcF'=\mcF$ and $C_3\mcE\leq \mcE'\leq C_4\mcE$ for some constants $C_3,C_4$ by Theorem 4.2 of \cite{GHL}. 

Hence, the claim follows by the domination principle of energy measures (see page 389 of \cite{Mosco}), and by Proposition 3.8 of \cite{HK} (which imples that energy measure on the boundary of squares associated with any $f\in \mathcal{F}$ and $(\mcE,\mcF)$ is $0$).
\end{proof}
	
The equations in (2),(3) of the definition can be easily verified with Claim 1,2. Hence $R'=R$. Noticing that the argument actually works for any subsequence, the theorem follows.
\end{proof}

\section*{Acknowledgments.}
The author is grateful for Professor Robert S. Strichartz for his long time support. He also wants to thank Professor David A. Croydon for his helpful suggestions about the writing.



\bibliographystyle{amsplain}

\begin{thebibliography}{10}
	\bibitem{B} M.T. Barlow, \emph{Diffusions on fractals.} Lectures on probability theory and statistics (Saint-Flour, 1995), 1--121, Lecture Notes in Math. 1690, Springer, Berlin, 1998.
	
	\bibitem{BB1}
	M.T. Barlow and R.F. Bass, \emph{The construction of Brownian motion on the Sierpinski carpet}, Ann. Inst.
	Henri Poincar\'{e} 25 (1989), no. 3, 225--257.
	
	\bibitem{BB2}
	M.T. Barlow and R.F. Bass, \emph{On the resistance of the Sierpiński carpet,} Proc. Roy. Soc. London Ser. A  431 (1990), no. 1882, 345--360.
	
	\bibitem{BB3}
	M.T. Barlow and R.F. Bass, \emph{Transition densities for Brownian motion on the Sierpinski carpet,} Probab. Theory Related Fields 91 (1992), 307--330.
	
	\bibitem{BB4}
	M.T. Barlow and R.F. Bass, \emph{Brownian motion and harmonic analysis on Sierpinski carpets,} Canad. J. Math. 51 (1999), no. 4, 673--744.
	
	\bibitem{BB5}
	M.T. Barlow and R.F. Bass, \emph{Random walks on graphical Sierpi\'nski carpets,} Random walks and discrete potential theory (Cortona, 1997), 26–55, Sympos. Math., XXXIX, Cambridge Univ. Press, Cambridge, 1999. 
	
	\bibitem{BBKT} M.T. Barlow, R.F. Bass, T. Kumagai and A. Teplyaev, \emph{Uniqueness of Brownian motion on Sierpinski carpets}, J. Eur. Math. Soc. 12 (2010), no. 3, 655--701.
	
	\bibitem{BCK} M.T. Barlow, T. Coulhon and T. Kumagai, \emph{Characterization of sub-Gaussian heat kernel estimates on strongly recurrent graphs,} Comm. Pure Appl. Math. 58 (2005), no. 12, 1642--1677.
	
	\bibitem{BP} M.T. Barlow and E.A. Perkins, \emph{Brownian motion on the Sierpiński gasket,} Probab. Theory Related Fields 79 (1988), no. 4, 543--623.
	
	\bibitem{Bi}  P. Billingsley, \emph{Convergence of probability measures.} Second edition. Wiley Series in Probability and Statistics: Probability and Statistics. A Wiley-Interscience Publication. John Wiley \& Sons, Inc., New York, 1999.
	
	\bibitem{C} S. Cao, \emph{Convergence of energy forms on Sierpinski gaskets with added rotated triangle}, ArXiv: 2011.11149.
	
	
	\bibitem{CQ} S. Cao and H. Qiu, \emph{Dirichlet forms on unconstrained Sierpinski carpets}, ArXiv: 2104.01529.
	
	\bibitem{CF} Z.Q. Chen and M. Fukushima, \emph{Symmetric Markov processes, time change, and boundary theory.} London Mathematical Society Monographs Series, 35. Princeton University Press, Princeton, NJ, 2012. xvi+479 pp.
	
	\bibitem{C1} D.A. Croydon, {\em Scaling limits of stochastic processes associated with resistance forms,} Ann. Inst. Henri Poincaré Probab. Stat. 54 (2018), no. 4, 1939--1968. 
	
	\bibitem{CHK} D.A. Croydon, B. Hambly and T. Kumagai, {\em Time-changes of stochastic processes associated with resistance forms.} Electron. J. Probab. 22 (2017), No. 82, 41 pp.
	
	
	\bibitem{FOT} M. Fukushima, Y. Oshima and M. Takeda, \emph{Dirichlet forms and symmetric Markov processes.} Second revised and extended edition,  De Gruyter Studies in Mathematics, 19. Walter de Gruyter \& Co., Berlin, 2011.
	
	\bibitem{GHL} A. Grigor'yan, J. Hu and K. Lau, \emph{Heat kernels on metric measure spaces and an application to semilinear elliptic equations,} Trans. Amer. Math. Soc. 355 (2003), no. 5, 2065--2095.
	
	\bibitem{G} S. Goldstein, \emph{Random walks and diffusions on fractals}, Percolation theory and ergodic theory of infinite particle systems (Minneapolis, Minn., 1984–1985), 121--129, IMA Vol. Math. Appl., 8, Springer, New York, 1987.
	
	\bibitem{HHH} B.M. Hambly, K. Hattori and T. Hattori, \emph{Self-repelling walk on the Sierpiński gasket,} Probab. Theory Related Fields 124 (2002), no. 1, 1--25. 
	
	\bibitem{HK} M. Hino and T. Kumagai, \emph{A trace theorem for Dirichlet forms on fractals,} J. Funct. Anal. 238 (2006), no. 2, 578--611.
	
	
	
	\bibitem{H} K.Hattori, \emph{Displacement exponent for loop-erased random walk on the Sierpiński gasket,} Stochastic Process. Appl. 129 (2019), no. 11, 4239--4268.
	
	\bibitem{HM} K. Hattori, M. Mizuno, \emph{Loop-erased random walk on the Sierpinski gasket,} Stochastic Process. Appl. 124 (2014), no. 1, 566--585.
	
	\bibitem{HH} K. Hattori and T. Hattori, \emph{Self-avoiding process on the Sierpiński gasket,} Probab. Theory Related Fields 88 (1991), no. 4, 405--428. 
	
	\bibitem{HHK} K. Hattori, T. Hattori and S. Kusuoka, \emph{Self-avoiding paths on the pre-Sierpiński gasket,} Probab. Theory Related Fields 84 (1990), no. 1, 1--26. 
	
	\bibitem{J}  S. Janiszewski, \emph{Sur les continus irréductibles entre deux points,} Journal de l'Ecole Polytechnique, 16 (1912), 76--170.
	
	\bibitem{Kenyon}
	R. Kenyon, \emph{The asymptotic determinant of the discrete Laplacian,}  Acta Math. 185 (2000), no. 2, 239--286.
	
	\bibitem{ki1} J. Kigami, \emph{A harmonic calculus on the Sierpinski spaces,} Japan J. Appl. Math. 6 (1989), no. 2, 259--290.
	
	\bibitem{ki2} J. Kigami, \emph{A harmonic calculus on p.c.f. self-similar sets,} Trans. Amer. Math. Soc. 335 (1993), no. 2, 721--755.
	
	\bibitem{ki3}
	J. Kigami, \emph{Analysis on Fractals.} Cambridge Tracts in Mathematics, 143. Cambridge University Press, Cambridge, 2001.
	
	\bibitem{ki4}
	J. Kigami, \emph{Harmonic analysis for resistance forms,} J. Funct. Anal. 204 (2003), no. 2, 399--444.
	
	\bibitem{ki5}
	J. Kigami, \emph{Resistance forms, quasisymmetric maps and heat kernel estimates,} Mem. Amer. Math. Soc. 216 (2012), no. 1015, vi+132 pp.
	
	\bibitem{Ko}
	G. Kozma, \emph{The scaling limit of loop-erased random walk in three dimensions,} Acta Math. 199 (2007), no. 1, 29--152.
	
	\bibitem{kus}
	S. Kusuoka, \emph{A diffusion process on a fractal,} in ``Probabilistic Methods in Mathematical Physics, Pro. Taniguchi Intern. Symp. (Katata/Kyoto, 1985)", Ito, K., Ikeda, N. (eds.). pp. 251-274, Academic Press, Boston, 1987.
	
	\bibitem{KZ}
	S. Kusuoka and X.Y. Zhou, \emph{Dirichlet forms on fractals: Poincar\'{e} constant and resistance}, Probab. Theory Related Fields 93 (1992), no. 2, 169--196.

    \bibitem{L1} 
    G.F. Lawler, \emph{A self-avoiding random walk,} Duke Math. J. 47 (1980), no. 3, 655--693. 
    
    \bibitem{L2}
    G.F. Lawler, \emph{Gaussian behavior of loop-erased self-avoiding random walk in four dimensions,} Duke Math. J. 53 (1986), no. 1, 249--269.
    
    \bibitem{L3}
    G.F. Lawler, \emph{Intersections of random walks.} Probability and its Applications. Birkhäuser Boston, Inc., Boston, MA, 1991. 219 pp.
    
    \bibitem{L4} 
    G.F. Lawler, \emph{The logarithmic correction for loop-erased walk in four dimensions,} Proceedings of the Conference in Honor of Jean-Pierre Kahane (Orsay, 1993). J. Fourier Anal. Appl. 1995, Special Issue, 347–361. 
    
    \bibitem{L5}
    G.F. Lawler, \emph{Loop-erased random walk,} Perplexing problems in probability, 197–217, Progr. Probab., 44, Birkh\"auser Boston, Boston, MA, 1999. 
    
    \bibitem{L6} 
    G.F. Lawler, \emph{Topics in loop measures and the loop-erased walk,} Probab. Surv. 15 (2018), 28--101.
    
    \bibitem{LSW} 
    G.F. Lawler, O. Schramm and W. Werner, \emph{Conformal invariance of planar loop-erased random walks and uniform spanning trees,} Ann. Probab. 32 (2004), no. 1B, 939--995.
    
    \bibitem{LV} 
    G.F. Lawler and F. Viklund, \emph{Convergence of loop-erased random walk in the natural parameterization.} Duke Math. J. 170 (2021), no. 10, 2289--2370.
    
    \bibitem{LS} 
    X. Li and D. Shiraishi, \emph{Natural parametrization for the scaling limit of loop-erased random walk in three dimensions}. arXiv:1811.11685.
    
	\bibitem{Lindstrom}
	T. Lindstr{\o}m, \emph{Brownian motion on nested fractals,} Mem. Amer. Math. Soc. 83 (1990), no. 420, iv+128 pp.
	
	\bibitem{LP}
	R. Lyons and Y. Peres, \emph{Probability on trees and networks,} Cambridge Series in Statistical and Probabilistic Mathematics, 42. Cambridge University Press, New York, 2016.
	
	\bibitem{MDM} 
	S.S. Manna, D. Dhar and S. Majumdar, \emph{Spanning trees in two dimensions,} Phys. Rev. A. 46 (1992), R4471(R).
	
	\bibitem{M}
	R. Masson, \emph{The growth exponent for planar loop-erased random walk,} Electron. J. Probab. 14 (2009), no. 36, 1012--1073.
	
	\bibitem{Mosco} 
	U. Mosco, \emph{Composite media and asymptotic Dirichlet forms}, J. Funct. Anal. 123 (1994), no. 2, 368--421.
	
	\bibitem{P}
	R. Pemantle, \emph{Choosing a spanning tree for the integer lattice uniformly,} Ann. Probab. 19 (1991), no. 4, 1559--1574.
	
	
	\bibitem{SS} 
	A. Sapozhnikov and D. Shiraishi, \emph{On Brownian motion, simple paths, and loops,} Probab. Theory Related Fields 172 (2018), no. 3-4, 615--662.
	
	\bibitem{D.S}
	D. Shiraishi, \emph{Growth exponent for loop-erased random walk in three dimensions,} Ann. Probab. 46 (2018), no. 2, 687--774.
	
	\bibitem{S} 
	O. Schramm, \emph{Scaling limits of loop-erased random walks and uniform spanning trees,} Israel J. Math. 118 (2000), 221--288.
	
	\bibitem{STW}	
	M. Shinoda, E. Teufl and S. Wagner, \emph{Uniform spanning trees on Sierpiński graphs,} ALEA Lat. Am. J. Probab. Math. Stat. 11 (2014), no. 1, 737–780.
		
	\bibitem{St}
	R.S. Strichartz, \emph{Differential Equations on Fractals: A Tutorial.} Princeton University Press, Princeton, NJ, 2006.
	
	
	\bibitem{W}
	D.B. Wilson, \emph{Generating random spanning trees more quickly than the cover time,} in Proceedings of the Twenty-Eighth Annual ACM Symposium on the Theory of Computing (Philadelphia, PA, 1996), pp. 293–303. ACM, New York, 1996.
\end{thebibliography}

\end{document}